\newtheorem{example}{Example}
\crefname{hypothesis}{Hypothesis}{Hypotheses}
\title{Towards a resolution of the Buchanan-Lillo Conjecture \thanks{Submitted to the editors on August 10, 2023. \funding{The first author was supported by the NSERC Grant RGPIN-2020-03934. 
The second author acknowledges the support of Ariel University. }}}
\author{Elena Braverman  \thanks{Dept. of Math. and Stats., University of
Calgary, Calgary, AB  T2N 1N4, Canada 
  (\email{maelena@ucalgary.ca},\email{maelena@math.ucalgary.ca}),
\url{https://science.ucalgary.ca/mathematics-statistics/contacts/elena-braverman}).}
\and  John Ioannis Stavroulakis  \thanks{Department of Mathematics, Ariel University, 
Ariel 4076414, Israel (\email{john.ioannis.stavroulakis@gmail.com},\email{Ioannis.Stavroul@msmail.ariel.ac.il})
}}
\begin{document}

\maketitle

\begin{abstract}
Buchanan and Lillo both conjectured that oscillatory solutions of the first-order delay differential equation with positive feedback
$x^{\prime }(t)=p(t)x(\tau (t))$, $t\geq 0$, 
where $0\leq p(t)\leq 1$, $0\leq t-\tau (t)\leq 2.75+\ln2,t\in \mathbb{R},$ are asymptotic to a shifted multiple of a unique periodic solution. This special solution was known to be uniform for all nonautonomous equations, and intriguingly, can also be described from the more general perspective of the mixed feedback case (sign-changing $p$). The analog of this conjecture for negative feedback, $p(t)\leq0$, was resolved by Lillo, and the mixed feedback analog was recently set as an open problem. 
In this paper, we investigate the convergence properties of the special periodic solutions in the mixed feedback case, characterizing the threshold between bounded and unbounded
oscillatory solutions, with standing assumptions that $p$ and $\tau$ are measurable, $\tau (t)\leq t$ and $\lim_{t\rightarrow \infty }\tau (t)=\infty$. 
We prove that nontrivial oscillatory solutions on this threshold are asymptotic (differing by $o(1)$) to the special periodic solutions for mixed feedback, which include the periodic solution of the positive feedback case. The conclusions drawn from these results elucidate and refine the conjecture of Buchanan and Lillo that
oscillatory solutions in the positive feedback case $p(t)\geq0$, would differ from a multiple, translation, of the special periodic solution, by $o(1)$.
\end{abstract}

\begin{keywords}
Semicycles, oscillatory solutions, linear first-order delay differential equations, boundedness of solutions, Buchanan-Lillo conjecture, critical case
\end{keywords}

\begin{MSCcodes}
34K25, 34K11, 34K12
\end{MSCcodes}



\section{Introduction}

Consider the nonautonomous first-order delay equation%
\begin{equation}
x^{\prime }(t)=p(t)x(\tau (t)),~~t\geq t_{0},  \label{06.18}
\end{equation}%
where $p:%
\mathbb{R}
\rightarrow 
\mathbb{R}
,\tau :%
\mathbb{R}
\rightarrow 
\mathbb{R}
$ are measurable, $p$ is locally integrable, and $\tau (t)\leq t,\forall t\in 
\mathbb{R}$
, $\lim_{t\rightarrow \infty }\tau (t)=\infty .$ An absolutely continuous
function $x:[\inf \{\tau (t),t\geq t_{0}\},+\infty )\rightarrow 
\mathbb{R}
,$ satisfying \eqref{06.18} on $[t_{0},+\infty )$ is a \textit{solution} of %
\eqref{06.18}. The sign of $p$ is referred to as the \textit{feedback} of
the equation. A function $x:A\rightarrow 
\mathbb{R}
,A\subset 
\mathbb{R}
$, is called \textit{oscillatory}, if and only if it possesses arbitrarily
large zeros. In such a case, a nonoscillation interval $(a,b),$ where $a<b,$
and $x(t)\neq 0,t\in (a,b)$ is a \textit{semicycle}.

The ordinary differential equation 
\begin{equation*}
x^{\prime }(t)=p(t)x(t),~t\geq 0
\end{equation*}%
possesses a one dimensional solution family, multiples of a single positive
solution, whose asymptotics are determined by the sign and behavior of $%
\int_{0}^{t}p(s)ds$ as $t\rightarrow \infty $.  Introducing a functional
argument as in \eqref{06.18}, the phase space becomes infinite dimensional \cite[%
Chapters 3, 4]{Hale} and oscillatory solutions occur. If the feedback is of
constant sign, and the delay $(t-\tau (t))$ is bounded by a sufficiently
small constant, the infinite family of oscillatory solutions has amplitudes
that tend to zero and are bounded by the family of positive solutions
(finite yet no longer one dimensional, see \cite[p. 203]{myshkis72}).
Increasing the delay allows for a finite number of unbounded solutions to
exist within the oscillatory family, while the rest tend to zero.

Intriguingly, the transition between bounded and unbounded oscillatory
solutions corresponds to a critical case of periodic oscillations, which are
unique for all nonautonomous equations. What is more, the critical delay for feedback of fixed sign, is in fact a
threshold of oscillation frequency (semicycle length), which may be extended
to \textquotedblleft mixed feedback\textquotedblright
(where $p$ is not of constant sign). Let us now describe the first, and foremost, results on
the asymptotic behavior of \eqref{06.18}, and their relation to the
oscillation speed of the solutions. Most celebrated among such results is
perhaps the $\frac{3}{2}-$criterion below.

\begin{proposition}
When $p(t)\leq 0,t\in 
\mathbb{R}
$, all oscillatory solutions of \eqref{06.18} are bounded when 
\begin{equation}
\sup_{t\geq t_{0}}\int_{\tau (t)}^{t}|p(w)|dw\leq \frac{3}{2}  
\label{3_by_22}
\end{equation}%
and tend to zero at infinity when the strict inequality holds in \eqref{3_by_22}.
\end{proposition}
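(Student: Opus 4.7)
The plan is to estimate successive semicycle maxima via integral inequalities obtained directly from the equation, and to conclude monotonicity in the non-strict case and contraction in the strict case. Enumerate the zeros $(\zeta_k)$ of an oscillatory solution $x$, write $I_k = (\zeta_k, \zeta_{k+1})$ for the successive semicycles, and set $M_k := \sup_{t \in I_k} |x(t)|$. By linearity and sign symmetry of \eqref{06.18}, it suffices to treat a positive semicycle $I_k$ preceded by a negative $I_{k-1}$.

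First, I would exploit the sign structure: since $p \leq 0$, on $I_k$ the derivative $x'(t) = p(t) x(\tau(t))$ is nonnegative precisely when $\tau(t) \in I_{k-1}$ (where $x \leq 0$) and nonpositive once $\tau(t) \in I_k$ (where $x \geq 0$). The peak $M_k$ is therefore attained at the point $\eta_k$ characterized by $\tau(\eta_k) = \zeta_k$, so that $M_k = x(\eta_k)$. On the ascending portion $(\zeta_k, \eta_k)$, two pointwise bounds on $|x(\tau(s))|$ are available: the crude estimate $|x(\tau(s))| \leq M_{k-1}$, and a refined estimate obtained by integrating $x'$ backwards from $\zeta_k$ to $\tau(s)$ inside $I_{k-1}$, yielding $|x(\tau(s))| \leq M_{k-1}\int_{\tau(s)}^{\zeta_k} |p(w)|\,dw$.

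Next, substituting these into $M_k = \int_{\zeta_k}^{\eta_k} |p(s)|\,|x(\tau(s))|\,ds$ and invoking the hypothesis $\int_{\tau(s)}^s |p| \leq 3/2$ to trade off the two factors, one obtains a majorant of the form $M_k \leq \Phi(A_k)\,M_{k-1}$, where $A_k := \int_{\zeta_k}^{\eta_k} |p|$ and $\Phi$ is an explicit continuous function with $\Phi \leq 1$ on $[0, 3/2]$. An analogous estimate on the descending portion $(\eta_k, \zeta_{k+1})$ may be needed to cover the range $A_k \in (1, 3/2)$, in which case the bound is propagated across two consecutive semicycles using the inductive hypothesis $M_{k-2} \leq M_{k-1}$ to control all occurrences of $|x|$ in terms of $M_{k-1}$. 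Monotonicity of $(M_k)$ then follows by induction, giving boundedness. Under the strict hypothesis $\sup_t \int_{\tau(t)}^t |p| \leq 3/2 - \varepsilon$, continuity of $\Phi$ furnishes a contraction constant $c < 1$ independent of $k$, so that $M_k \to 0$ geometrically.

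The main obstacle is producing a global majorant $\Phi \leq 1$ valid throughout $[0, 3/2]$: neither the crude bound (giving $\Phi(A) = A$, which exceeds $1$ already for $A > 1$) nor the single-step refined bound (which slightly exceeds $1$ as $A \to 3/2$) suffices on its own, and the sharp constant $3/2$ is recovered only after a careful interpolation between the ascending and descending portions of the semicycle, or after cross-semicycle propagation. A secondary technicality is that the refined bound implicitly requires dominating $|x|$ on intervals possibly reaching before $I_{k-1}$, so the induction must be arranged to maintain uniform domination by $M_{k-1}$ throughout.
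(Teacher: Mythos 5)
The paper does not prove this proposition; it records it as a classical theorem (Myshkis, Yorke, Lillo, et al.) with historical references, so your attempt is being compared against the standard literature proofs rather than an in-paper argument.

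Your two-estimate strategy --- a crude bound $|x(\tau(s))|\le M_{k-1}$ together with a refined bound obtained by integrating $x'$ backwards from the zero $\zeta_k$ --- is exactly how the sharp constant $\tfrac32$ enters, but you over-hedge about whether the single-semicycle computation closes. Combining the two bounds and substituting $u=\int_{\zeta_k}^{s}|p|$, the hypothesis $\int_{\tau(s)}^{s}|p|\le\tfrac32$ gives
\[
M_k \;\le\; M_{k-1}\int_{0}^{A_k}\min\Bigl\{1,\tfrac32-u\Bigr\}\,du,
\]
and the right-hand integral equals $A_k$ on $[0,\tfrac12]$ and $-\tfrac18+\tfrac32A_k-\tfrac12A_k^2$ on $[\tfrac12,\tfrac32]$, hence is $\le 1$ on all of $[0,\tfrac32]$ with equality only at $A_k=\tfrac32$. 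So no cross-semicycle propagation is needed to handle $A_k\in(1,\tfrac32)$.

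The genuine gaps are elsewhere. First, for merely measurable $\tau$ the point $\eta_k$ with $\tau(\eta_k)=\zeta_k$ need not exist nor coincide with the maximizer; what you actually need, and must argue, is that the contributions to $\int_{\zeta_k}^{\eta_k} p\,x(\tau)\,ds$ from $\{s:\tau(s)>\zeta_k\}$ are nonpositive (so they can be discarded), and that $A_k=\int_{\zeta_k}^{\eta_k}|p|\le\tfrac32$, which follows because once $\int_{\zeta_k}^{\cdot}|p|$ exceeds $\tfrac32$ the hypothesis forces $\tau(s)\ge\zeta_k$ a.e.\ and hence $x'\le0$. Second --- and this is the step that breaks --- what you label a ``secondary technicality'' is the crux. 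The refined bound $|x(\tau(s))|\le\int_{\tau(s)}^{\zeta_k}|p(w)|\,|x(\tau(w))|\,dw$ involves $\tau(w)$ for $w\in I_{k-1}$, which can fall in $I_{k-2}$ or earlier, and the inductive hypothesis you invoke, $M_{k-2}\le M_{k-1}$, runs the \emph{wrong way}: if $(M_k)$ is nonincreasing, then $M_{k-2}\ge M_{k-1}$, so you cannot dominate earlier semicycle maxima by $M_{k-1}$. The standard repair (as in Yorke and Lillo) is to replace $M_{k-1}$ by the running supremum $N:=\sup_{t\le\zeta_k}|x(t)|$, or in the decay argument by $\limsup_{t\to\infty}|x(t)|+\varepsilon$. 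Every look-back is then automatically $\le N$, one obtains $M_k\le\Phi(A_k)N\le N$, giving boundedness; and in the strict case $\sup_t\int_{\tau(t)}^{t}|p|<\tfrac32$ yields a uniform $c<1$ with $\Phi(A_k)\le c$, which combined with the $\limsup$ definition forces $\limsup_{t\to\infty}|x(t)|=0$. With this substitution your outline becomes a correct proof.
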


This result was first proven by Myshkis \cite{myshkisbook51,myshkispaper51,myshkis72}, under the assumption that $\inf_{t\geq
t_{0}}|p(t)|>0$ and $\sup_{t\in {{{{{{{\mathbb{[}}}}}}}}t_{0},+\infty
)}(t-\tau (t))~\cdot \sup_{t\in {{{{{{{\mathbb{[}}}}}}}}t_{0},+\infty
)}|p(t)|<\frac{3}{2}$. It was subsequently generalized to \eqref{3_by_22} in several papers
(see \cite{buch, lillo, 1, yoneyama, yorke}),
and even to a nonlinear equation in \cite{yorke}.

This $\frac{3}{2}-$criterion follows from bounds on the maximal intervals
which allow for increasing positive solutions, imposed by the sign of the
coefficient. These growth bounds are explicitly and intrinsically related
to the following antiperiodic solution $\varpi ^{-}$ of \eqref{06.18},
satisfying $p(t)\equiv -1,\tau _{\max }:=\sup_{t\geq t_{0}}(t-\tau (t))=%
\frac{3}{2}$, which first appeared in \cite{myshkisbook51, myshkispaper51}, and to which oscillatory solutions of \eqref {06.18} are
compared over a semicycle:
\begin{equation}
\begin{array}{lll}
\varpi ^{-}(t)=1-t, & \tau (t)=0, & t\in [ 0,3/2] \\ 
\varpi ^{-}(t)=-1/2-\int_{0}^{t-3/2}(1-u)du, & \tau (t)=t-3/2, & t\in
[ 3/2,5/2] \\ 
\varpi ^{-}(t)=-\varpi ^{-}(t+5/2), & \tau (t+5/2)=\tau (t)+5/2, & t\in {{{%
\mathbb{R}}}}\text{.}%
\end{array}
\label{11}
\end{equation}

The reader should note that we may (and henceforth shall) speak of $\tau
_{m}:=\sup_{t\geq t_{0}}(t-\tau (t))$ instead of $\sup_{t\geq
t_{0}}\int_{\tau (t)}^{t}|p(w)|dw$, and assume $|p(t)|\equiv 1$, as they are
equivalent under a change of variables (first introduced by Ladas et al. 
\cite{ladas}, see also {\ref{appendixB}}).

\bigskip For  positive feedback , $%
p(t)\geq 0,t\in 
\mathbb{R}
$, we have a similar result.

\begin{proposition}
\label{Proposition11_09}
\bigskip When $p(t)\geq 0,t\in 
\mathbb{R}
$, all oscillatory solutions of \eqref{06.18} are bounded when 
\begin{equation}
\sup_{t\geq t_{0}}\int_{\tau (t)}^{t}|p(w)|dw\leq 2+\frac{3}{4}+\ln 2
\label{5_29}
\end{equation}%
and tend to zero at infinity when the strict inequality holds in \eqref{5_29}.
\end{proposition}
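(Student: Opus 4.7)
My plan is to adapt the semicycle-based approach used for the negative-feedback $\tfrac{3}{2}$-criterion to the positive feedback setting.

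\emph{Normalization.} First I would invoke the Ladas-type change of variables (cf.~Appendix~\ref{appendixB}) to reduce to the case $p(t)\equiv 1$ with $\tau_m:=\sup_{t\geq t_0}(t-\tau(t))\leq 2+\tfrac{3}{4}+\ln 2$. This preserves oscillatory solutions and turns \eqref{5_29} into a pure constraint on the delay.

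\emph{Extremal profile.} Next I would construct the positive-feedback analogue $\varpi^+$ of the Myshkis profile $\varpi^-$ from \eqref{11}: an antiperiodic solution of the autonomous delay equation $y'(t)=y(t-\tau_m^\ast)$ with $\tau_m^\ast=2+\tfrac{3}{4}+\ln 2$. Starting from a zero, one integrates the equation piecewise according to which previously-constructed subinterval contains the delayed argument $t-\tau_m^\ast$: this yields a linear piece, then a polynomial of higher degree on a piece of length roughly $\tfrac{3}{4}$, and finally an exponential-type piece contributing the logarithmic length $\ln 2$. The antiperiodicity matching at the next zero pins down the critical value $\tau_m^\ast$.

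\emph{Peak comparison.} For an oscillatory solution $x$ with consecutive zeros $z_{n-1}<z_n<z_{n+1}$, the relation $x'(t)=x(\tau(t))$ says that the sign of $x'$ on the current semicycle is dictated by the sign of $x$ at the delayed argument. Hence on the portion of $(z_n,z_{n+1})$ where $\tau(t)<z_n$, $x$ is pulled back toward zero by the history of opposite sign, while on the remaining portion it is amplified by its own (positive) history. Integrating these pieces and majorizing the integrands by the previous peak produces an inequality of the form
\[
\max_{[z_n,z_{n+1}]}|x|\le R(\tau_m)\,\max_{[z_{n-1},z_n]}|x|,
\]
where $R$ is an increasing function with $R(\tau_m^\ast)=1$, the equality being saturated by $\varpi^+$. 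Under the nonstrict bound \eqref{5_29} the peaks remain bounded, while under strict inequality one has $R(\tau_m)<1$ and hence geometric decay (and thus convergence to zero).

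\emph{Main obstacle.} The principal difficulty is pinning down the sharp constant $2+\tfrac{3}{4}+\ln 2$. Unlike the negative feedback case, whose worst-case semicycle is purely polynomial and produces the clean $\tfrac{3}{2}$, positive feedback admits an additional exponential amplification phase---the source of the $\ln 2$ term---so matching the extremal profile demands a careful case analysis keyed to where $\tau(t)$ sits in relation to the zeros $z_{n-1},z_n$. The measurability (rather than continuity) of $p,\tau$ and the upgrade from boundedness to decay under strict inequality are comparatively routine, the latter following from a standard compactness/limit-profile argument using the extremal nature of $\varpi^+$.
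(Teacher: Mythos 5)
The paper itself does not prove this statement: it is cited to Lillo \cite{lillo} (for piecewise continuous parameters), building on Buchanan, and imported as a known result. Against that classical proof your outline captures the broad shape --- normalize $|p|\equiv 1$, build an extremal antiperiodic profile, and compare peaks of consecutive semicycles --- but it contains an error that would sink the argument if executed as written.

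You describe $\varpi^{+}$ as ``an antiperiodic solution of the autonomous delay equation $y'(t)=y(t-\tau_m^{\ast})$,'' i.e.\ a constant-delay equation. It is not. From \eqref{17.07}, the delay in the defining equation of $\varpi^{+}$ is sharply nonautonomous: $\tau(t)$ is constant only on the initial linear piece (where $t-\tau(t)$ attains its supremum $\tau_m^{\ast}$), advances with slope one on the quadratic piece, and satisfies $\tau(t)=t$ (zero delay) on the exponential piece of length $\ln 2$. That zero-delay segment is the source of the $\ln 2$ in the critical constant; a constant-delay equation with lag $\tau_m^{\ast}$ has no such phase, and in particular possesses a real exponential nonoscillatory solution, so it cannot supply the extremal oscillatory profile. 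The delay must be chosen adversarially, piece by piece, to saturate the growth bound at every stage --- that adversarial choice is the substantive content of the Soboleva/Myshkis construction, and it is why the critical constant decomposes as $2+\tfrac{3}{4}+\ln 2$ rather than emerging from a transcendental characteristic equation. A second, smaller gap: a one-step recursion $\max_{[z_n,z_{n+1}]}|x|\le R(\tau_m)\max_{[z_{n-1},z_n]}|x|$ is too optimistic, since $\tau_m^{\ast}\approx 3.44$ exceeds the antiperiod $\tfrac{13}{8}+\ln 2\approx 2.32$, so the delayed argument reaches back into two earlier semicycles; the actual Buchanan--Lillo comparison tracks sequences of extrema over a longer history window, which is precisely where the difficulty you flag as the ``main obstacle'' resides.
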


Proposition~{\ref{Proposition11_09}} 
was conjectured (see \cite{myshkisgerman}, \cite[p. 171]%
{myshkis72}) since the discovery in the MSc thesis of E.\,I.~Soboleva in 1953
of an antiperiodic solution $\varpi ^{+}$ of \eqref{06.18} satisfying $%
p(t)\equiv +1,\tau _{m }=2+\frac{3}{4}+\ln 2$ :%
\begin{eqnarray}
\varpi ^{+}(t) &=&\left\{ 
\begin{array}{ll}
1-t, & t\in [ 0,9/8] \\ 
-1/8-\int_{0}^{t-9/8}(1-u)du, & t\in [ 9/8,13/8] \\ 
-1/2\exp\left\{  t-13/8\right\} , & t\in [ 13/8,13/8+\ln 2] \\ 
-\varpi ^{+}(t+\ln 2+13/8), & t\in {{{\mathbb{R}}}}\text{.}%
\end{array}%
\right.   \label{17.07} \\
&&%
\begin{array}{ll}
\tau (t)=-\ln 2-13/8, & t\in [ 0,9/8] \\ 
\tau (t)=-\ln 2-13/8+(t-9/8), & t\in [ 9/8,13/8] \\ 
\tau (t)=t, & t\in [ 13/8,13/8+\ln 2] \\ 
\tau (t+\ln 2+13/8)=\tau (t)+\ln 2+13/8, & t\in {{{\mathbb{R}}}}\text{.}%
\end{array}
\notag
\end{eqnarray}%
Proposition {\ref{Proposition11_09}} was proven by Lillo \cite{lillo} (for
piecewise continuous parameters), building upon the previous work of
Buchanan \cite{buch, buch1971,buch1974}.

\bigskip In the case of \textquotedblleft mixed feedback\textquotedblright
, where $p$ is not of constant sign, the delay cannot directly determine the
growth of oscillatory solutions on its own. However, one does obtain results
similar to those of the negative and positive feedback cases, introducing
the semicycle length as an additional parameter. For results obtained by
different methods, related to admissibility, see \cite{azb,bb7,bb9,bb11,bb10, gyor.hart., ght}.

In the case of $|p(t)|\equiv 1$, it was shown by Stavroulakis and Braverman 
\cite{1} that all oscillatory solutions with semicycles of maximum length $%
\ell \leq 2$ are bounded, and under the strict inequality $\ell <2$, they
tend to zero (cf. \cite[Theorem 12]{myshkisbook51}, \cite[Theorem 12]%
{myshkispaper51}). Moreover, this bound can be improved under restrictions
on the maximum delay, obtaining a spectrum of criteria that ensure that
oscillatory solutions tend to zero.

\begin{proposition}[\cite{1}]
\bigskip Assume $|p(t)|\equiv 1$ and set 
\begin{equation}
\Lambda (s):=\left\{  \begin{array}{ll} 2+s-\sqrt{2s}-\ln \left( \sqrt{2s}-1\right) , & s\in
[ 1,2] \\ 2, & s\geq 2.\end{array} \right.  \label{06.09}
\end{equation}%
If the semicycle length of an oscillatory solution of \eqref{06.18} is
bounded above by $\Lambda (\tau _{m })$, then the solution is bounded. If
the semicycle length is bounded above by a strictly smaller constant, the
solution tends to zero at infinity.
\end{proposition}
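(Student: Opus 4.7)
The plan is to establish a semicycle-to-semicycle contraction estimate: writing $M_n$ for the supremum of $|x|$ on the $n$-th semicycle $I_n$, I would derive a bound $M_{n+1} \leq C(\ell,\tau_m)\,M_n$ with a factor $C$ satisfying $C \leq 1$ precisely when $\ell \leq \Lambda(\tau_m)$, and $C<1$ when $\ell<\Lambda(\tau_m)$. Once the one-step contraction is in hand, boundedness follows by iteration, and convergence to zero under the strict hypothesis is immediate, since an oscillatory solution admits infinitely many semicycles and a uniform contraction $C<1$ forces $M_n\to 0$, hence $x(t)\to 0$.

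For the one-step bound, I would first invoke the standing normalization $|p(t)|\equiv 1$. Consider a positive semicycle $(a,b)$ of length $\ell$ and let $c\in(a,b)$ be an interior point where $x$ attains its maximum $M=x(c)>0$. Then $x'(c)=p(c)\,x(\tau(c))=0$ together with $|p(c)|=1$ forces $x(\tau(c))=0$, so $\tau(c)\leq a$ and consequently $c-a\leq\tau_m$. Writing $N$ for the maximum of $|x|$ on the preceding semicycle ending at $a$, the identity $M=\int_a^c p(s)\,x(\tau(s))\,ds$ splits naturally according to whether $\tau(s)\leq a$, where the integrand is bounded in absolute value by $N$, or $\tau(s)>a$, where it is controlled by values of $x$ on the current semicycle. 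The sign of $p(s)$ is forced in each regime by the requirement that $x$ be driven toward $M$; an analogous analysis on the descending portion $(c,b)$ and on the start of the next semicycle yields further constraints, together expressing $M/N$ as a functional of the measurable pair $(p,\tau)$ restricted to the semicycle.

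The principal obstacle will be to identify the extremal configuration maximizing $M/N$ and to show that this maximum equals $1$ precisely when $\ell=\Lambda(\tau_m)$. The closed form $\Lambda(s)=2+s-\sqrt{2s}-\ln(\sqrt{2s}-1)$ suggests a three-piece extremal, analogous to $\varpi^\pm$ in \eqref{11} and \eqref{17.07}: a linear piece of length $\sqrt{2\tau_m}$ driven by the constant-valued extremum of the previous semicycle (contributing the $\sqrt{2s}$ term); a transitional quadratic piece in which $\tau(t)$ grazes the zero $a$ (yielding the polynomial terms $2+s$); and a self-referential piece where $\tau(t)$ lies within the current semicycle and $x'(t)=\pm x(\tau(t))$, producing an exponential profile whose integration contributes the logarithmic term $-\ln(\sqrt{2s}-1)$. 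Rigorously verifying that this explicit three-piece profile dominates every admissible pair $(p,\tau)$ amounts to a calculus-of-variations argument on the middle portion of the semicycle, where $\tau$ may switch between accessing the previous and the current semicycle; this step, essentially a sharp control over the worst-case measurable rearrangement, is where I expect the bulk of the technical effort to lie.
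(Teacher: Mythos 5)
Your strategy—a semicycle-to-semicycle contraction $M_{n+1}\le C\,M_n$ with threshold at $\Lambda(\tau_m)$—is exactly the framework the paper uses, and your guess at the three-piece extremal profile ($\varpi_{\tau_m}$ from \eqref{14}) is correct. But the step you defer as the ``bulk of the technical effort'' is the entire proof, and it is not a free-form calculus-of-variations problem: the paper resolves it by splitting the semicycle at its max and proving two \emph{separate} comparison bounds. The descent from the max to the next zero is compared to $x_\tau$, the fundamental solution of the autonomous negative-feedback equation \eqref{03.55}; Lemma~\ref{Lemma7.39} shows (via an iterated-integral scheme $\varphi_{n+1}(t)=\min\{\int_t^{\varrho(\tau)}\varphi_n(s-\tau)\,ds,1\}$) that $x_\tau$ pointwise dominates the rescaled solution on the descent and that this descent lasts at least $\varrho(\tau)$. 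The ascent from the zero to the max is compared to $\psi_\tau$, defined by the ODE \eqref{3.46} built from $x_\tau$; Lemma~\ref{Lemma3.43} (a standard differential-inequality comparison) shows $\psi_\tau$ dominates the ascent. Then $\Lambda(\tau)$ is not something you discover at the end of a variational argument—it is \emph{defined} (Definition~\ref{Definition4.40}) by $\psi_\tau(\Lambda(\tau)-\varrho(\tau))=1$, and the closed form \eqref{06.09} is the evaluation of that definition. The contraction factor you want is precisely $C=\psi_{\tau_m}(\alpha-\varrho(\tau_m))$, and the proof of Lemma~\ref{Lemma10.46} is a contradiction argument: if some later value exceeded $C$ times the earlier sup, the containing semicycle would need ascent $>\alpha-\varrho(\tau_m)$ plus descent $\ge\varrho(\tau_m)$, i.e.\ length $>\alpha$. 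None of the integral-equation comparison machinery appears in your sketch, and without it the claim that the three-piece profile ``dominates every admissible pair $(p,\tau)$'' is unverified.

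Separately, the pointwise argument at the maximum is not sound as written: $x$ is only absolutely continuous, so $x'(c)=p(c)x(\tau(c))$ holds a.e., not at the particular point $c$; you cannot conclude $x(\tau(c))=0$ nor $c-a\le\tau_m$ from the stationarity of a single point. The paper avoids this by never evaluating $x'$ at the extremum—it works with $\int_{\tilde t}^{t}|x'|(w)\,dw$ and the monotone envelopes $x_\tau$, $\psi_\tau$ throughout (see the displayed inequality leading to \eqref{4.48} and Corollary~\ref{Corollary11.31}). Finally, the passage from $C<1$ to $x(t)\to 0$ is fine in spirit but needs a one-line appeal to the fact that values of $x$ after a zero are controlled by the preceding semicycle's sup (Corollary~\ref{Corollary10.28}), so that the decay of the sequence $M_n$ actually controls $x$ everywhere.
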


\begin{definition}
We will call a solution of \eqref {06.18}, $\alpha$-rapidly oscillating or
simply $\alpha -$oscillating, if eventually the length of its
nonoscillation intervals is bounded by $\alpha$, where $\alpha \in
(0,+\infty )$.
\end{definition}

As in the previous cases, the following threshold nonnegative $\Lambda (\tau
_{m })-$periodic solutions of \eqref{06.18}, to which the absolute value
of arbitrary oscillatory solutions was compared for each $\tau _{m}\in
[ 1,2]$, were instrumental in the methods of \ \cite{1}:%
\begin{eqnarray}
~~\varpi _{\tau _{m }}(t) &=\left\{ 
\begin{array}{l}
t, ~ t\in [ 0,\tau _{m }-1] \\ 
\tau _{m }-1-\int_{0}^{t-\left( \tau _{m
}-1\right) }(1-u)du, ~ t\in [ \tau _{m }-1,\left( \tau _{m
}+1\right) -\sqrt{2\tau _{m }}] \\ 
\exp (t-  \tau _{m}-1 +%
\sqrt{2\tau _{m }} )\left( \sqrt{2\tau _{m }}-1\right) , \\  t\in
[ \tau _{m}+1-\sqrt{2\tau _{m}},\Lambda (\tau
_{m})-1] \\ 
1-(t-\left( \Lambda (\tau _{m})-1\right) ), ~ 
t\in {{{{{{{\mathbb{[}}}}}}}}\Lambda (\tau _{m})-1,\Lambda (\tau _{m
})] \\ 
\varpi _{\tau _{m }}(\Lambda (\tau _{m
})+t), ~ t\in {{{{{{{\mathbb{R.}}}}}}}}%
\end{array}%
\right.   \label{14} \\
&%
\begin{array}{ll}
\tau (t)=-1, & t\in [ 0,\tau _{m }-1] \\ 
\tau (t)=-1+(t-\tau _{m}-1), & t\in [ \tau _{m }-1,\left( \tau
_{m}+1\right) -\sqrt{2\tau _{m }}] \\ 
\tau (t)=t, & t\in [ \left( \tau _{m}+1\right) -\sqrt{2\tau _{m
}},\Lambda (\tau _{m })-1] \\ 
\tau (t)=\Lambda (\tau _{m })-1, & t\in {{{{{{{\mathbb{[}}}}}}}}\Lambda
(\tau _{m })-1,\Lambda (\tau _{m })] \\ 
\tau (t+\Lambda (\tau _{m }))=\tau (t)+\Lambda (\tau _{m }), & t\in {{{%
{{{{\mathbb{R.}}}}}}}}%
\end{array}
\notag
\end{eqnarray}%
Furthermore, to each such solution $\varpi _{\tau _{m}}$, there
corresponds an $\Lambda (\tau _{m })-$antiperiodic solution $\overset{\sim }{\varpi} _{\tau
_{m}}$ of \eqref {06.18}, with positive feedback $p(t)\equiv +1$ and $%
\overset{\sim }{\tau }_{m }:=2+2\tau _{m }-\sqrt{2\tau _{m}}-\ln
\left( \sqrt{2\tau _{m }}-1\right) $:

\begin{equation*}
\begin{array}{l}
\overset{\sim }{\varpi} _{\tau _{m }}(t)=\varpi _{\tau _{m }}(t),~t\in [ 0,2+\tau
_{m}-\sqrt{2\tau _{m }}-\ln \left( \sqrt{2\tau _{m}}-1\right) ]
\\ 
\overset{\sim }{\varpi} _{\tau _{m}}(t)=-\overset{\sim }{\varpi} _{\tau _{m }}(t+2+\tau _{m }-\sqrt{2\tau
_{m}}-\ln \left( \sqrt{2\tau _{m }}-1\right) ),~t\in {{{{{{{\mathbb{R}%
}}}}}}}%
\end{array}%
\end{equation*}%
Strikingly, $\varpi ^{+}$ is equal to $\overset{\sim }{\varpi} _{\frac{9}{8}}$ up to
translation, which is the $\overset{\sim }{\varpi} $ corresponding to the least maximum delay,
indicating a deep connection between oscillatory solutions of the positive
feedback and the mixed feedback equations. Namely, it would seem that the
sole \textquotedblleft cause\textquotedblright\ of unboundedness in the
positive feedback case is that the function may be rewritten as a unbounded solution of an equation with mixed feedback.

Perhaps the most interesting property of the special periodic
solutions is uniqueness and uniformity: nontrivial oscillatory solutions are
asymptotic to the special periodic solutions, when the delay is critical.
This was proven by Lillo \cite[Theorem 3.1, Theorem 3.3]{lillo} for the
negative feedback, and both Lillo and Buchanan further conjectured that
oscillatory solutions of \eqref{06.18} with $p(t)\geq 0,t\in 
\mathbb{R}
$, are asymptotic to a multiple and translation of $\varpi ^{+}$, proving a
partial result in this direction \cite{buch}, \cite[Theorem 3]{buch1971}.
For more relevant results proven by these techniques, the reader is referred
to \cite{buch1971,buch1974}. More recently, Stavroulakis and Braverman \cite%
{1}, inspired by the remarks of Buchanan \cite[p. 52]{buch}, \cite[p. 676]%
{buch1971}, and Lillo \cite[p. 13]{lillo}, further conjectured a stronger
notion of convergence to the periodic solutions in the critical case. They
furthermore set the computation of the critical semicycle length $\Lambda
(\tau )$ for $\tau \in (\frac{1}{e},1)$ as an open problem.

In the present paper, we extend the definition of $\Lambda (\tau ),\varpi _{\tau
}, $ to $\tau \in (\frac{1}{e},2],$ and calculate the asymptotics of $\Lambda
(\tau )$ near $\frac{1}{e}$ (Section 4, {\ref{appendixA}}), using certain auxiliary results (Section 3). We confirm the
conjecture of \cite{1} for $\tau _{m}\in (\frac{1}{e},2)$ and amend it for $%
\tau _{m}=2$ (Section 5)$.$ In the critical case of mixed feedback, where $%
\tau _{m}=\tau \in (\frac{1}{e},2)$, any $\Lambda (\tau )-$oscillating
solution will be asymptotic to $\varpi _{\tau }$ up to
scaling and translation, differing by $o(1)$ (we use the standard Landau
notation throughout). Reasoning by analogy and considering that $\varpi ^{+}$
is equal to $\overset{\sim }{\varpi }_{\frac{9}{8}}$ up to translation, we
refine the conjecture of Buchanan \cite[p. 52]{buch}, \cite[p. 676]{buch1971}%
, and Lillo \cite[p. 3, 13]{lillo} that oscillatory solutions of the positive
feedback equation at the critical state would differ from a translation
of $\varpi ^{+}$ by $o(1)$ (Section 2). We finally discuss possible ways of tackling the
Buchanan-Lillo conjecture, and why it remains open (Section 6).

\section{Rates of convergence}

The following asymptotic result was proven by Lillo \cite{lillo}.

\begin{proposition}[{\protect\cite[Theorem 3.1, Theorem 3.3]{lillo}}]
\label{Proposition11.24}Assume that $x$ solves 
\begin{equation*}
x^{\prime }(t)=-x(\tau (t)),~~t\geq t_{0}
\end{equation*}%
with $t-\frac{3}{2}\leq \tau (t)\leq t,\forall t\geq 0$. Then there exists a
sequence $t_{i}\ $such that $\lim_{i\rightarrow \infty }(t_{i+1}-t_{i})=%
\frac{5}{2}$ and 
\begin{equation}
\lim_{t\rightarrow \infty }(x(t)-y(t))=0  \notag
\end{equation}%
where 
$$
y(t) :=M(-1)^{i}\varpi ^{-}(t-t_{i}),t\in [ t_{i},t_{i+1}], \quad  
M :=\limsup_{t\rightarrow \infty }|x(t)| 
$$
and $\varpi ^{-}$ is given in \eqref{11}.
\end{proposition}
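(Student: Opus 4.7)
The plan is to combine boundedness from the $3/2$-criterion with a translation-compactness argument and a uniqueness/rigidity characterization of the extremal bounded solution of the limit equation. Since $p\equiv -1$ and $t-\tau(t)\le 3/2$, the $3/2$-criterion gives that $x$ is bounded, hence $M:=\limsup_{t\to\infty}|x(t)|<\infty$. Choose $s_n\to\infty$ with $|x(s_n)|\to M$ and consider the translates $x_n(t):=x(t+s_n)$, $\tau_n(t):=\tau(t+s_n)-s_n$. The family $\{x_n\}$ is uniformly bounded and uniformly Lipschitz (since $|x_n'|\le\|x\|_\infty$), so by the Arzel\`a--Ascoli theorem a subsequence converges locally uniformly on $\mathbb{R}$ to a limit $\tilde x$; passing to a further subsequence we may assume $\tau_n\to\tilde\tau$ (weakly-$\ast$, say) with $0\le t-\tilde\tau(t)\le 3/2$. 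Then $\tilde x$ is an $\mathbb{R}$-defined oscillatory solution of $\tilde x'(t)=-\tilde x(\tilde\tau(t))$ satisfying $\sup_{\mathbb{R}}|\tilde x|=|\tilde x(0)|=M$.

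The heart of the proof is to show that any such extremal $\mathbb{R}$-defined solution must equal $\pm M\varpi^-$ up to a translation. This is a sharp-case analysis of the $3/2$-criterion: on each semicycle $[\zeta_i,\zeta_{i+1}]$ of $\tilde x$, writing $\tilde x(t)=-\int_{\zeta_i}^{t}\tilde x(\tilde\tau(s))\,ds$ and exploiting $t-\tilde\tau(t)\le 3/2$ yields a pointwise bound of the form $|\tilde x(t)|\le M\,|\varpi^-(t-\zeta_i)|$, with equality if and only if $\zeta_{i+1}-\zeta_i=5/2$, the delay $\tilde\tau$ saturates its bounds on the prescribed subintervals, and $\tilde x$ traces the shape of $\pm\varpi^-$. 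Achieving $\sup|\tilde x|=M$ at $t=0$ forces equality on the semicycle containing $0$; the rigidity then propagates both forwards and backwards in time across zeros, since any slack on a single semicycle strictly lowers the sup attainable on subsequent semicycles. This pins $\tilde x$ uniquely up to a translation parameter.

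To pass back to $x$ itself, let $\{z_i\}$ be its zeros and apply the translation-compactness with $s_n=z_{i_n}$ along a subsequence where $m_{i_n}:=\sup_{[z_{i_n},z_{i_n+1}]}|x|\to M$. The limit is $\pm M\varpi^-(\cdot-c)$ with a zero at $0$, hence $c=0$, so $x(z_{i_n}+\cdot)\to \pm M\varpi^-$ locally uniformly and $z_{i_n+1}-z_{i_n}\to 5/2$. A bootstrapping step — based on the fact that $m_{i+1}\le m_i$ along a tail and that any gap $M-m_i$ persists permanently — upgrades this to full convergence: $m_i\to M$, $z_{i+1}-z_i\to 5/2$, and $x(t)-M(-1)^i\varpi^-(t-z_i)\to 0$ uniformly on $[z_i,z_{i+1}]$. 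Taking $t_i:=z_i$ completes the proof, with uniform continuity of $\varpi^-$ absorbing the $o(1)$ discrepancy $z_{i+1}-z_i-5/2$ in the translation.

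The main obstacle is the rigidity/uniqueness assertion in the second step: showing that \emph{every} $\mathbb{R}$-defined bounded oscillatory solution of the limit equation attaining its sup is a signed translate of $\varpi^-$, for any admissible measurable $\tilde\tau$. This requires a quantitative, stable form of the $3/2$-criterion, so that a small defect on any single semicycle strictly and quantifiably reduces the sup attainable on all subsequent semicycles; absent such quantitative rigidity, one obtains only subsequential convergence. A secondary subtlety is propagating the phase identification from the extremal semicycle to its neighbours in order to preclude ``phase drift'' across zeros and justify the asymptotic spacing $t_{i+1}-t_i\to 5/2$ in the strong sense claimed.
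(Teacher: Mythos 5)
The paper does not reprove this proposition; it is cited from Lillo. However, the paper does prove the directly analogous Theorem~\ref{Theorem11.05} for the mixed-feedback case, and that proof (via Lemma~\ref{Lemma10.50}) reveals exactly what is missing from your proposal. The approach there, and in Lillo, is a direct, quantitative estimate over each semicycle: if a solution reaches within $\delta$ of its sup on some semicycle, then on that semicycle it is within an explicit power of $\delta$ of the periodic profile, and the deficit decays summably. Your compactness-and-rigidity framing is a genuinely different route, but it is not closed: you identify, correctly, that the ``heart of the proof'' — every $\mathbb{R}$-defined bounded oscillatory solution of the limit equation attaining its sup is a signed translate of $\varpi^-$, and moreover a quantitative, stable form of this — is precisely the content you would have to establish. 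As written, that step is asserted rather than proved. Without it, Arzel\`a--Ascoli plus translation invariance only yields subsequential convergence along extremal windows, not the full asymptotic statement $\lim_{t\to\infty}(x(t)-y(t))=0$ with $t_{i+1}-t_i\to 5/2$.

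Two secondary points. First, passing $\tau_n\rightharpoonup\tilde\tau$ weakly-$\ast$ does not directly give $x_n(\tau_n(\cdot))\to\tilde x(\tilde\tau(\cdot))$; one has to pass to the limit in integral form, identify the weak limit $h(s)$ of $-x_n(\tau_n(s))$, observe that $h(s)$ lands in the interval $\tilde x([s-\frac32,s])$, and then invoke a measurable selection to recover an admissible $\tilde\tau$. This is repairable but is glossed over. Second, the bootstrap ``any gap $M-m_i$ persists permanently'' is exactly the quantitative rigidity you flag as an obstacle; invoking it at the end is circular unless it is proved. In short, the proposal is a reasonable program with the right ingredients named, but it leaves the load-bearing lemma — the quantitative semicycle comparison that the paper supplies in Lemma~\ref{Lemma10.50} for its setting, and that Lillo supplies in his — as an acknowledged open step, so it does not yet constitute a proof.
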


\begin{remark}
Proposition {\ref{Proposition11.24}} cannot be strengthened to%
\begin{equation}
x(t)=\left( \limsup_{t\rightarrow \infty }|x(t)|\right) \varpi
^{-}(t+const)+o(1),  \notag
\end{equation}%
as the following Example illustrates.
\end{remark}

\begin{example}
\label{Example11.29}
We set $x(t)=\varpi ^{-}(t),t\in (-\infty ,\frac{3}{2}]$
and
\begin{align*}
 x^{\prime }(t)  & =  x\left( m\frac{5}{2}-\sum_{i=1}^{m}\frac{1}{i}\right)  
\left( \varpi^{-} \right)^{\prime }\left( t-\left[ m\frac{5}{2}-\sum_{i=1}^{m}\frac{1}{i}\right]
\right) , \\
t  & \in  \left[ m\frac{5}{2}-\sum_{i=1}^{m}\frac{1}{i},(m+1)\frac{5}{2} -\sum_{i=1}^{m+1}\frac{1}{i}\right],
\end{align*}%
where $m \in {\mathbb N}$. Obviously,%
\begin{eqnarray*}
x(t) & =x\left( m\frac{5}{2}-\sum_{i=1}^{m}\frac{1}{i}\right) \varpi ^{-}\left(
t-\left[ m\frac{5}{2}-\sum_{i=1}^{m}\frac{1}{i}\right] \right) , \\  t  & \in \left[
m\frac{5}{2}-\sum_{i=1}^{m}\frac{1}{i},(m+1)\frac{5}{2}-\sum_{i=1}^{m+1}%
\frac{1}{i}\right],  \\
& x\left( m\frac{5}{2}-\sum_{i=1}^{m}\frac{1}{i} \right)=(-1)^{m}\prod_{i=1}^{m} \left( 1-\frac{1}{%
2i^{2}} \right). &
\end{eqnarray*}%
Notice that $\sum\limits_{i=1}^{\infty }\frac{1}{i}$ diverges but $%
\prod\limits_{i=1}^{\infty }(1-\frac{1}{2i^{2}})$ does converge to a positive
constant (because the sum of logarithms converges). Considering the sequence
of zeros of $x$ and of shifts of $\varpi ^{-}$, using \eqref {11}, we may
exclude $x(t)=\left( \limsup_{t\rightarrow \infty }|x(t)|\right) \varpi
^{-}(t+const)+o(1)$.
\end{example}

\bigskip The failure of the stronger convergence illustrated in Example {\ref%
{Example11.29} is due to the the derivative of }$\varpi ^{-}$ tending to zero
near the extrema of $\varpi ^{-}.$ In fact, the expansion of the derivative
near its zeros is closely related to the rate of convergence. If, on the
other hand, the derivative is separated from zero, oscillatory solutions
tend to a shift of the periodic solution. Such is the case for $\varpi
_{\tau },$ which satisfies 
\begin{equation*}
{\rm essinf}_{t\in 
\mathbb{R}
}|\varpi _{\tau }^{\prime }(t)|>0.
\end{equation*}%
Notice that because the parameters of the equation are only piecewise
continuous, the solution is only absolutely continuous and not continuously
differentiable. Hence its derivative may be separated from zero, despite the
extrema.

\bigskip The main result of this paper characterizes the rate of convergence
to $\varpi _{\tau }$ at the critical state, while extending the definition
of $\Lambda (\tau ),\varpi _{\tau },$ to $\tau \in (\frac{1}{e},2].$ For $%
\tau \in [ 1,2)$ we confirm the conjecture in 
\cite{1}:

{\bf Main Result}
\label{Proposition5.22}
 Let $x$ be a $\Lambda (\tau )-$oscillating
solution of \eqref {06.18} with $|p(t)|\equiv 1,\tau _{m}\leq \tau \in
[ 1,2)$. Then 
\begin{equation}
|x(t)|=M\varpi _{\tau }(t+\eta )+o(1)  \label{11.03}
\end{equation}%
where $\varpi _{\tau }$ is given in \eqref {14}, $M=\limsup_{t\rightarrow
\infty }|x(t)|$, and $\eta \in [ 0,\Lambda (\tau ))$.

Another factor which may affect the rate of convergence is the interposition
of intervals between the semicycles of the periodic solution. This is
possible when the derivative is piecewise constant, and equal to the maximum
possible in absolute value. Such is the case for $\tau _{m}\geq 2$. We will
see that an interval with length bounded from \ below, where the delayed
argument is in the descent of the previous semicycle, is crucial to Theorem {\ref{Theorem11.05}}.

\begin{example}
\label{Example11.28}We set $x(t)=\varpi _{2}(t),t\in (-\infty ,0]$ and for a
large integer $N\in 
\mathbb{N}
$, 
$$
t_{n+1} = t_{n}+2+\frac{1}{n+N}, ~n=0,1,2,..., \quad
t_{0} =-2
$$
and%
\begin{equation*}
\begin{array}{ll}
x^{\prime }(t)=0, & t\in [ t_{n}+2,t_{n+1}]    \\ 
x^{\prime }(t)=x(t_{n}+1), & t\in [ t_{n+1},t_{n+1}+(1-\frac{1}{n+N})]
  \\ 
x^{\prime }(t)=1-t+\left[ t_{n+1}+(1-\frac{1}{n+N})\right] , & t\in [
t_{n+1}+1-\frac{1}{n+N},t_{n+1}+1]   \\ 
x^{\prime }(t)=-x(t_{n+1}+1), & t\in [ t_{n+1}+1,t_{n+1}+2] 
\end{array}%
\end{equation*}%
Obviously, 
\begin{equation*}
x(t_{n+1}+1)=\prod\limits_{i=0}^{n}\left( 1-\frac{1}{2\left( i+N\right) ^{2}%
}\right) .
\end{equation*}%
However, $x(t)=\left( \limsup\limits_{t\rightarrow \infty }x(t)\right) \varpi
_{2}(t+const)+o(1)$ is impossible, as $\sum\limits_{i=N}^{\infty }\frac{1}{i}$
diverges but $\prod\limits_{i=N}^{\infty }(1-\frac{1}{2i^{2}})$ does converge to a
positive constant.
\end{example}

Considering the main result of this paper and the aforementioned
relationship between the positive and mixed feedback cases at the critical
state ($\varpi ^{+}$ is equal to $\overset{\sim }{\varpi} _{\frac{9}{8}}$ up
to translation), we refine the conjecture of Buchanan \cite[p. 52]{buch}, 
\cite[p. 676]{buch1971}, and Lillo \cite[p. 3, 13]{lillo}:
\vskip 1pt
\textquotedblleft it seems plausible that the only solutions which do not converge to $0$ are those that tend as a limit to a constant multiple of [$\varpi ^{+}$]\textquotedblright \cite[p. 52]{buch};
\vskip 1pt
\textquotedblleft The results obtained [...] suggest that all oscillatory
solutions for [$p(t)\equiv 1, \tau _{m}=2+\frac{3}{4}+\ln 2$...] which do not tend to
zero as $t \rightarrow \infty$  will be asymptotic to a scalar multiple of [$\varpi ^{+}$]  \textquotedblright \cite[p. 3]{lillo}.
\vskip 1pt
While Buchanan and
Lillo did not explicitly define what kind of convergence they felt was
plausible, the analogy of \eqref {11.03} motivates the following:

\bigskip
{\bf Buchanan-Lillo Conjecture}
Consider an oscillatory solution $x$ of \eqref {06.18} with 
\begin{equation*}
p(t)\equiv 1,\tau _{m}=2+\frac{3}{4}+\ln 2.
\end{equation*}
Then 
\begin{equation*}
x(t)=\left( \limsup_{t\rightarrow \infty }|x(t)|\right) \varpi
^{+}(t+const)+o(1),
\end{equation*}%
where $\varpi ^{+}$ is given in \eqref{17.07}.

\bigskip

\bigskip \bigskip The derivative of $\varpi ^{+}$ being separated from zero,
and the fact that the delay in the ascent of one semicycle is in the
descent of the previous semicycle (preventing one from interposing intervals
where the function is zero between semicycles), {indicate that
counterexamples such as Examples \ref{Example11.29}, \ref{Example11.28}, are
impossible. This speaks for the validity of the stronger\ convergence to the 
} $\varpi ^{+}${, in the sense of the conjecture. }

\section{Auxiliary results}

\bigskip The following Lemma enables us to restrict attention to nonnegative
solutions, equivalently, to the absolute value of solutions.

\begin{lemma}[{{\protect\cite[Theorem 1]{serrin}},{\protect\cite{tandra}}}]
\label{Lemma11.30}
When \bigskip $x$ solves%
\begin{equation*}
x^{\prime }(t)=p(t)x(\tau (t)),~~t\geq t_{0}
\end{equation*}%
then $|x|$ also solves%
\begin{eqnarray*}
|x|^{\prime }(t)=\widetilde{p}\left( t\right) |x|(\tau (t)), & t\geq t_{0}& \\
\widetilde{p}\left( t\right) :=\left\{ 
\begin{array}{cc}
sgn\left[ x(t)x(\tau (t))\right] p(t), & x(t)x(\tau (t))\neq 0, \\ 
p(t), & x(t)x(\tau (t))=0,%
\end{array}%
\right.  &&
\end{eqnarray*}%
where $sgn(\cdot)$ is the sign function.
Further, we have 
\begin{equation*}
|\widetilde{p}(t)|=|p(t)|,\text{a.e.  for }t\geq t_{0}.
\end{equation*}
\end{lemma}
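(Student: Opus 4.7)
My plan is to verify the stated differential equation for $|x|$ by a direct, almost-everywhere, sign-based case analysis. Since $x$ is absolutely continuous on every compact subinterval of $[\inf\{\tau(t)\},+\infty)$ and the map $u \mapsto |u|$ is $1$-Lipschitz, $|x|$ is itself absolutely continuous, hence differentiable a.e.\ and recoverable by integration of its derivative. It therefore suffices to check the identity $|x|'(t) = \widetilde{p}(t)\,|x|(\tau(t))$ pointwise for almost every $t \geq t_0$.

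Split $[t_0,\infty)$ into the zero set $Z := \{t \geq t_0 : x(t) = 0\}$ and its complement. Off $Z$, the sign of $x$ is locally constant, so at points of differentiability
\[
|x|'(t) \;=\; \mathrm{sgn}(x(t))\, x'(t) \;=\; \mathrm{sgn}(x(t))\, p(t)\, x(\tau(t)).
\]
If moreover $x(\tau(t)) \neq 0$, I factor $x(\tau(t)) = \mathrm{sgn}(x(\tau(t)))\,|x|(\tau(t))$ and use $\mathrm{sgn}(x(t))\,\mathrm{sgn}(x(\tau(t))) = \mathrm{sgn}[x(t)\,x(\tau(t))]$ to recover precisely $\widetilde{p}(t)\,|x|(\tau(t))$. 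If $x(\tau(t)) = 0$, both sides vanish and the identity is trivial regardless of the convention chosen for $\widetilde{p}$. On $Z$, I invoke the classical measure-theoretic fact that for any absolutely continuous function the derivative vanishes almost everywhere on its zero set; a short proof follows from Lebesgue's density theorem, since at a density point $t$ of $Z$ one has $x(t+h) - x(t) = o(h)$ by the AC estimate $|x(t+h) - x(t)| \leq \int_t^{t+h}|x'|$ combined with density. Thus $x'(t) = 0$ for a.e.\ $t \in Z$, which via the original equation forces $p(t)\,x(\tau(t)) = 0$ a.e.\ on $Z$, so both sides of the prospective identity vanish there irrespective of how $\widetilde{p}$ is defined. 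Piecing the two cases together yields the ODE almost everywhere.

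The conclusion $|\widetilde{p}(t)| = |p(t)|$ a.e.\ is then immediate: wherever $x(t)x(\tau(t)) \neq 0$ the factor $\mathrm{sgn}[\cdots]$ is $\pm 1$, and on the complement $\widetilde{p}$ is defined to equal $p$ outright. The only genuine technical obstacle is the zero-set step, because there the elementary sign manipulation breaks down precisely when $x$ and $x\circ\tau$ might simultaneously vanish; the density-theorem argument cleanly dispatches it, and I would cite Serrin \cite{serrin} or Tandra \cite{tandra} for this standard fact to keep the proof compact. Everything else is a mechanical sign bookkeeping.
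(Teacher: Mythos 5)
The paper does not actually prove this lemma: it is stated and attributed to Serrin--Varberg and Tandra, so there is no in-paper argument to compare against. Your proposal supplies a self-contained direct proof, which is a reasonable thing to do, and its overall structure is sound: $|x|$ is absolutely continuous as a Lipschitz post-composition of an AC function; off the zero set $Z$ of $x$ the sign is locally constant and the identity reduces to elementary sign bookkeeping; on $Z$ one appeals to the standard fact that the derivative of an AC function vanishes a.e.\ on its zero set, which forces both sides of the claimed ODE to vanish a.e.\ on $Z$; and $|\widetilde{p}|=|p|$ a.e.\ is immediate from the definition. This is precisely the content the paper outsources to the cited references.

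One small technical wobble: the sentence justifying $x'(t)=0$ a.e.\ on $Z$ via ``$|x(t+h)-x(t)|\le\int_t^{t+h}|x'|$ combined with density'' does not by itself give $x(t+h)=o(h)$, because $x'$ may be large on the small complement of $Z$ in $[t,t+h]$; one would additionally need to localize at a Lebesgue point of $x'$ or invoke differentiability of $x$ at $t$. The cleaner and standard route is the following: the set of isolated points of $Z$ is countable, hence null, so a.e.\ $t\in Z$ is a limit of other points $t_n\in Z$; at any such $t$ where $x$ (equivalently $|x|$) is differentiable, $x'(t)=\lim_n\frac{x(t_n)-x(t)}{t_n-t}=0$. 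This is exactly the fact the paper delegates to \cite{serrin}, and substituting it for the density-theorem clause makes your argument airtight. Otherwise your proof is correct.
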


The following function was first investigated by Myshkis \cite%
{myshkispaper51}, in relation to oscillation problems.
We will prove that it is an upper bound on the interval where oscillatory
solutions decrease from a maximum to the next zero.

\begin{lemma}[\protect{\cite{myshkispaper51}, \cite[Theorem 5]{2}}]
\label{Lemma3.35}For each fixed $\tau >\frac{1}{e}$, denote by $x_{\tau }$
the solution of 
\begin{equation}
x_{\tau }^{\prime }(t)+x_{\tau }(t-\tau )=0,~~t\geq 0, \quad
x_{\tau }(t)=1,~~t\leq 0.%
\label{03.55}
\end{equation}%
This function strictly decreases from one to its first zero 
\begin{equation}
\varrho(\tau):=\inf\{t>0:x_{\tau}(t)=0\}.
\label{08.06}
\end{equation}%
This first root $\varrho (\tau )$ is a
continuous function of $\tau $, strictly decreasing on $(\frac{1}{e},1]$, constant $\varrho(\tau)\equiv1$ for $\tau\geq1$,
and satisfies the following asymptotic expansion near $\frac{1}{e}^{+}:$
\begin{equation*}
\varrho (\tau )=\frac{\pi }{\sqrt{2e^{3}}}\left( \tau -\frac{1}{e}\right)
^{-1/2}+o\left( \left( \tau -\frac{1}{e}\right) ^{-1/2}\right) .
\end{equation*}

\end{lemma}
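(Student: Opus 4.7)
The plan is to split the claims into qualitative properties (monotone decrease to the first zero; $\varrho \equiv 1$ on $[1, \infty)$; continuity and strict monotonicity on $(1/e, 1]$) and the quantitative asymptotic near $1/e^+$, which will be the main obstacle.

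For $\tau \geq 1$, the method of steps gives immediately $x_\tau(t) = 1 - t$ on $[0, 1]$, and hence $\varrho(\tau) = 1$. For $1/e < \tau < 1$, the method of steps again yields a classical solution, and the identity $x_\tau'(t) = -x_\tau(t - \tau)$ shows that $x_\tau$ strictly decreases as long as its past values remain positive; in particular, $x_\tau$ is strictly monotone on $[0, \varrho(\tau)]$. To show $\varrho(\tau) < \infty$ for $\tau > 1/e$, I would invoke the classical dichotomy for autonomous DDEs: $x' + x(t - \tau) = 0$ admits a non-oscillatory solution if and only if the characteristic function $H(\lambda) := \lambda + e^{-\lambda \tau}$ has a real root, and elementary calculus gives $\min_{\lambda \in \mathbb{R}} H(\lambda) = (1 + \ln \tau)/\tau$, which is strictly positive precisely when $\tau > 1/e$. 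Hence all nontrivial solutions oscillate, so the strictly decreasing positive $x_\tau$ must reach zero in finite time.

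Continuity of $\varrho$ at each $\tau_0 > 1/e$ follows from continuous dependence of DDE solutions on the delay in $C([0, T])$ for any fixed $T > \varrho(\tau_0)$, combined with the transversality $x_\tau'(\varrho(\tau)) = -x_\tau(\varrho(\tau) - \tau) < 0$. For the strict decrease on $(1/e, 1]$, I would differentiate $x_\tau(\varrho(\tau)) = 0$ implicitly to obtain $\varrho'(\tau) = -\partial_\tau x_\tau(\varrho(\tau))/x_\tau'(\varrho(\tau))$; the variational function $v := \partial_\tau x_\tau$ satisfies $v'(t) = -v(t - \tau) + x_\tau'(t - \tau)$ with $v \equiv 0$ on $(-\infty, 0]$, and a short method-of-steps calculation (the forcing $x_\tau'(t - \tau)$ is non-positive, and $v$ stays negative by induction on the steps $[k\tau, (k+1)\tau]$) gives $v(\varrho(\tau)) < 0$, whence $\varrho'(\tau) < 0$.

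The core of the argument, and where I anticipate the main difficulty, is the asymptotic at $1/e^+$. I would analyze the characteristic equation $H(\lambda, \tau) := \lambda + e^{-\lambda \tau} = 0$ near the degenerate double root $(\lambda, \tau) = (-e, 1/e)$, at which $H = \partial_\lambda H = 0$ while $\partial_\lambda^2 H = 1/e$ and $\partial_\tau H = e^2$. Writing $\tau = 1/e + \epsilon$ and $\lambda = -e + \delta$, Taylor expansion gives $\delta^2 = -2 e^3 \epsilon\, (1 + o(1))$, so the principal pair of characteristic roots bifurcates into $\lambda_\pm = -e \pm i\beta$ with $\beta = \sqrt{2 e^3 \epsilon}\,(1 + o(1))$. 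The Laplace transform of $x_\tau$ on $[0, \infty)$ works out to $\widetilde{x}_\tau(s) = (s - 1 + e^{-s\tau})/(s(s + e^{-s\tau}))$; computing the residue at $\lambda_+$ using $H'(\lambda_+) = 1 + \tau \lambda_+ \approx i\beta/e$ and adding the conjugate contribution produces a principal-mode term $(2/\beta)\, e^{-et}\sin(\beta t)$, whose first positive zero is $\pi/\beta$. A Rouch\'e argument on a small disk about $-e$ then confirms that no other root of $H$ approaches $-e$ as $\epsilon \to 0^+$, so the remaining modes have real parts bounded above by some $-e - c$ (with $c > 0$ uniform for $\epsilon$ small); at $t = O(1/\beta)$ their combined contribution is therefore exponentially dominated by the principal mode, and an implicit function argument locates the first zero of $x_\tau$ within $o(1/\beta) = o(\epsilon^{-1/2})$ of $\pi/\beta$, giving the stated $\varrho(\tau) = \pi/\sqrt{2 e^3 (\tau - 1/e)} + o((\tau - 1/e)^{-1/2})$. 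The delicate step is precisely this uniform Rouch\'e separation together with quantitative control over a growing interval of length $O(1/\beta)$.
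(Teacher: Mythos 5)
This Lemma is cited in the paper (to Myshkis and to \cite[Theorem 5]{2}) rather than proved, so there is no internal proof in the paper to compare against. Your approach to the asymptotic expansion is essentially the approach of the cited reference \cite{2}: the characteristic function $\lambda + e^{-\lambda\tau}$ has a real double root at $(\lambda,\tau)=(-e,1/e)$ that bifurcates into a complex-conjugate pair $-e\pm i\beta$ with $\beta=\sqrt{2e^3(\tau-1/e)}\,(1+o(1))$, the Laplace-transform and residue computation you give for $\widetilde{x}_\tau$ and the principal mode $(2/\beta)e^{-et}\sin(\beta t)$ are correct, and the genuine difficulty — uniform Rouch\'e separation of the remaining roots together with control of the error over the growing interval $[0,O(1/\beta)]$ — is exactly what needs to be carried out carefully. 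Your treatment of $\tau\ge 1$, of the strict decrease of $x_\tau$ to its first zero, of finiteness of $\varrho$ via the oscillation dichotomy, and of continuity of $\varrho$ via continuous dependence plus the transversality $x_\tau'(\varrho(\tau))<0$, are all fine.

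The one place where the sketch has a real gap is the strict monotonicity of $\varrho$ on $(1/e,1]$ via the variational function $v=\partial_\tau x_\tau$. You assert that because the forcing $x_\tau'(t-\tau)$ in $v'(t)=-v(t-\tau)+x_\tau'(t-\tau)$ is non-positive, a ``short method-of-steps calculation'' shows $v$ stays negative by induction on $[k\tau,(k+1)\tau]$. But once $v<0$, the delayed feedback term $-v(t-\tau)$ becomes \emph{positive} and competes with the non-positive forcing, so the inductive step does not close from the sign of the forcing alone. Concretely, on $[2\tau,3\tau]$ one computes $v'(t)=2(t-2\tau)-1$, which turns positive for $t>2\tau+\tfrac12$; whenever $\varrho(\tau)>2\tau+\tfrac12$ (which happens for $\tau$ close to $1/e$) this subinterval lies inside the range you must control, and the naive induction breaks. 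The conclusion $v(\varrho(\tau))<0$ is true, but its proof requires a more quantitative estimate (e.g.\ bounding $\int_0^{t-\tau}v$ against the accumulated forcing, or bypassing the variational approach altogether and comparing $x_{\tau_1}$ and $x_{\tau_2}$ directly as in the paper's Lemma~\ref{Lemma7.39} and Corollary~\ref{Corollary3.19}). Note that the paper's Corollary~\ref{Corollary3.19} is not an independent source for this fact: it cites Lemma~\ref{Lemma3.35} for $\varrho(\tau_1)>\varrho(\tau_2)$ as an input.
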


The next Lemma is a backwards analog and generalization of a well-known
comparison theorem (\cite[Theorem 1.1]{gyori:96}, \cite[Theorem 1]{kwong}).
As previously remarked, it suffices to study only nonnegative solutions.
Because the equations are moreover linear, it is sufficient - and simplest -
to consider solutions with values between $0$ and $1$.

\begin{lemma}
\label{Lemma7.39}
Assume for $\tau >\frac{1}{e}$ 
\begin{eqnarray}
y^{\prime }(t)+c(t)y(t-\sigma (t))=0,t\in [ 0,\varrho (\tau )]
\label{19.20} \\
1\geq \max_{[ -\tau ,\varrho (\tau )]}y>0, ~~\min_{[-\tau
,\varrho (\tau )]}y\geq 0, \quad
y(\varrho (\tau ))=0,  \notag
\end{eqnarray}%
where the functions $\sigma,c$ are measurable, $0\leq \sigma (t)\leq \tau $ and $|c(t)|\leq 1$, and $\varrho $
is the first root of the solution $x_{\tau }$ of \eqref {03.55}  described in Lemma~{\ref{Lemma3.35}}. Then, 
\vskip 1pt
I)
\begin{equation}
y(t) \leq x_{\tau }(t),t\in [ 0,\varrho (\tau )].  \label{9.10}
\end{equation}%
\vskip 1pt
II)\begin{equation*}
a(t):=x_{\tau }(t)-y(t)\geq 0,t\in [ 0,\varrho (\tau )]
\end{equation*}
is a nonincreasing
function.
\vskip 1pt
III) If, for a fixed $\tau \in (\frac{1}{e},1)$ and
sufficiently small $\delta \in (0,1)$, we have $a(\varrho
(\tau )-\tau)\leq \delta$ then 
\begin{equation}
0\leq x_{\tau }(t)-y(t)=a(t)\leq O\left( \delta ^{2^{-\left\lfloor \frac{\varrho
(\tau )}{\tau }\right\rfloor }}\right) ,t\in [ 0,\varrho (\tau )-\tau ],%
\text{ }  \label{8.37}
\end{equation}%
where $\left\lfloor \cdot \right\rfloor $ is the greatest integer function
and $O\left( \delta ^{2^{-\left\lfloor \frac{\varrho (\tau )}{\tau }%
\right\rfloor }}\right)$ is a bound which is uniform in $t\in [ 0,\varrho (\tau )-\tau ]$, it  varies with respect to $\delta\rightarrow 0^{+}$ 
according to the standard Landau notation, and depends only on the fixed $\tau \in (\frac{1}{e},1)$.
\end{lemma}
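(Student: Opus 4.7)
Set $a(t):=x_\tau(t)-y(t)$ and attack the three parts in order.

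\textbf{Part I.} Differentiation gives
\begin{equation*}
a'(t)=-x_\tau(t-\tau)+c(t)\,y(t-\sigma(t)),
\end{equation*}
and the key estimate is the chain
\begin{equation*}
c(t)\,y(t-\sigma(t))\le y(t-\sigma(t))\le x_\tau(t-\sigma(t))\le x_\tau(t-\tau),
\end{equation*}
valid whenever $y\le x_\tau$ has already been established up to $t-\sigma(t)$, using $|c|\le 1$, $y\ge 0$, $\sigma\le\tau$ and the monotonicity of $x_\tau$. My plan is to run the method of steps on intervals of length $\tau$, with base step $a\equiv 1-y\ge 0$ on $[-\tau,0]$, and to close the inductive loop by the endpoint value $a(\varrho(\tau))=0$ (since both $x_\tau$ and $y$ vanish there) together with a first-crossing argument in the spirit of \cite{kwong,gyori:96}.

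\textbf{Part II.} The same computation, now with $y\le x_\tau$ available throughout, yields $a'\le 0$ on $[0,\varrho(\tau)]$.

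\textbf{Part III.} This is the technical heart. The exponent $2^{-\lfloor \varrho(\tau)/\tau\rfloor}$ strongly suggests iterating a single-step square-root-type estimate, once for every interval of length $\tau$ traversed backwards starting from $t_1:=\varrho(\tau)-\tau$. My plan is:

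(i) Prove a one-step lemma of the form: there is a constant $C=C(\tau)$ such that on any subinterval $[t-\tau,t]\subset[0,\varrho(\tau)-\tau]$,
\begin{equation*}
\max_{[t-\tau,\,t]} a \;\le\; C\,\bigl(\max_{[t,\,t+\tau]} a\bigr)^{1/2}.
\end{equation*}
The mechanism is the exact identity
\begin{equation*}
a(t-\tau)-a(t)=\int_{t-\tau}^{t}\bigl[x_\tau(s-\tau)-c(s)\,y(s-\sigma(s))\bigr]\,ds,
\end{equation*}
combined with a lower bound for the integrand in terms of pointwise differences $x_\tau(u)-y(u)$ on a shifted subinterval, coming from $\sigma\le\tau$ and the monotonicity of $x_\tau$. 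The quadratic (hence the square root) enters because a pointwise value of $a$ must be bounded by an integral, forcing a product-type estimate on the difference in which the two factors are related through the equation.

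(ii) Iterate this one-step estimate $n:=\lfloor \varrho(\tau)/\tau\rfloor$ times from $t_1$ down to $0$, obtaining bounds $\delta,\sqrt{\delta},\delta^{1/4},\ldots,\delta^{1/2^n}$ on the successive subintervals of length $\tau$.

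(iii) Use the monotonicity of $a$ from Part II to glue these local bounds into the uniform estimate $a(t)\le O(\delta^{1/2^n})$ on $[0,\varrho(\tau)-\tau]$.

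\textbf{Main obstacle.} The hardest step will be (i): pinning down the precise square-root inequality and verifying that the constant $C(\tau)$ is uniform over the admissible class of pairs $(c,\sigma)$. The fact that $c$ and $\sigma$ are only assumed measurable with $|c|\le 1$ and $0\le\sigma\le\tau$ means the one-step bound must accommodate the worst case, which is exactly what forces the square-root behaviour (anything better would correspond to extra regularity not available here). A secondary concern is that as $\tau\downarrow 1/e$ the integer $n$ blows up by Lemma \ref{Lemma3.35} and $C(\tau)$ must be tracked carefully; but since the statement fixes $\tau\in(1/e,1)$, the constant may safely be absorbed into the $O(\cdot)$ notation.
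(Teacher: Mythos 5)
Your high-level structure for Parts II and III matches the paper, but there are two substantive gaps, one in Part I and one in the one-step estimate of Part III.

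\textbf{Part I.} The forward method of steps from $[-\tau,0]$ cannot close, and the first-crossing argument does not obviously repair it. The chain
\begin{equation*}
c(t)\,y(t-\sigma(t))\le y(t-\sigma(t))\le x_\tau(t-\sigma(t))\le x_\tau(t-\tau)
\end{equation*}
needs $y\le x_\tau$ at the point $t-\sigma(t)$, and since $\sigma(t)$ is allowed to equal $0$, already on the first interval $[0,\tau]$ you need $y\le x_\tau$ at $t$ itself, which is precisely what you are trying to establish; the induction is circular from step one. Nothing local prevents $y$ from exceeding $x_\tau$: on a short subinterval one may take $\sigma\equiv 0$ and $c\equiv -1$, giving $y'=y$, which rises, while $x_\tau$ strictly decreases from $1$, so $y$ can overshoot $x_\tau$ on $(0,\varepsilon)$ when $y(0)=1$. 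What rules out a global overshoot is the terminal datum $y(\varrho(\tau))=0$ together with $\max y\le 1$, and these are global constraints that a first-crossing argument, which only sees a one-sided neighborhood of the first crossing time, does not propagate. The paper avoids this with a monotone iteration: set $\varphi_0\equiv 1$ and $\varphi_{n+1}(t)=\min\bigl\{\int_t^{\varrho(\tau)}\varphi_n(s-\tau)\,ds,\,1\bigr\}$, and prove inductively that $\varphi_n$ decreases pointwise to $x_\tau$ while $y\le\varphi_n$ for every $n$ (using the integral form of \eqref{19.20}, $|c|\le 1$, $\sigma\le\tau$, and the monotonicity of each $\varphi_n$). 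Passing to the limit gives \eqref{9.10} without ever invoking the inequality being proved.

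\textbf{Part III.} You correctly identify the iterated square-root estimate: since $a$ is nonincreasing, your one-step inequality $\max_{[t-\tau,t]}a\le C\bigl(\max_{[t,t+\tau]}a\bigr)^{1/2}$ is exactly the paper's induction step $a(\varrho(\tau)-(k+1)\tau)\le O\bigl(\sqrt{a(\varrho(\tau)-k\tau)}\bigr)$, and steps (ii)--(iii) of your plan match the paper. But the mechanism you sketch for (i) --- ``a lower bound for the integrand in terms of pointwise differences on a shifted subinterval'', giving a ``product-type estimate'' --- is not a proof, and the actual argument has two ingredients your plan does not contain. Since $-a'\ge 0$ and $\int_{\varrho(\tau)-k\tau}^{\varrho(\tau)-(k-1)\tau}(-a')\le\delta$, Markov's inequality bounds the measure of the set where $-a'\ge\sqrt{\delta}$ by $\sqrt{\delta}$; this produces a point $\eta$ within $\sqrt{\delta}+\varepsilon$ of $\varrho(\tau)-k\tau$ where $y(\eta-\sigma(\eta))\ge x_\tau(\eta-\tau)-\sqrt{\delta}$. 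Combined with $y\le x_\tau$ (Part I) and the quantified strict monotonicity $|x_\tau'|\ge x_\tau(\varrho(\tau)-\tau)>0$ on $[0,\varrho(\tau)]$, this forces $\sigma(\eta)$ to lie within $O(\sqrt{\delta})$ of $\tau$, so $\eta-\sigma(\eta)$ lies within $O(\sqrt{\delta})$ of $\varrho(\tau)-(k+1)\tau$; the uniform Lipschitz constant of $y$ and $x_\tau$ then transfers the bound to the target point. The measure-theoretic (Markov) step and the delay-pinning via the lower bound on $|x_\tau'|$ are precisely where the square root comes from, and without them the one-step lemma does not follow.
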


\begin{proof}
I)
Consider the sequence of functions 
\begin{eqnarray}
\varphi _{0}(t) \equiv 1, \quad
\varphi _{n+1}(t)  = \min \left\{\int_{t}^{\varrho (\tau )}\varphi _{n}(s-\tau
)ds,1\right\},t\leq \varrho (\tau ). \label{09.40}
\end{eqnarray}%
We show that the function sequence $\varphi _{n}$ is pointwise nonincreasing, and each function is nonnegative. The inequality
\begin{equation}
0\leq\varphi _{n+1}(t)\leq \varphi _{n}(t),~~t\leq \varrho(\tau)
\label{9.13}
\end{equation}%
holds by definition for $n=0$. Integrating \eqref{9.13} from $t$ to $\varrho(\tau)$ for a given nonnegative integer $n$, and using definition \eqref{09.40}, we immediately see that \eqref{9.13} holds for $n+1$.
\vskip 1pt
We furthermore notice that for $n=0$
\begin{equation}
x_{\tau}(t)\leq \varphi _{n}(t), ~~t\leq \varrho(\tau).
\label{9.17}
\end{equation}%
Assuming \eqref{9.17} for a given nonnegative integer $n$ we will show that \eqref{9.17} holds for $n+1$. Firstly, integrating \eqref{9.17} and \eqref{03.55}, using definition \eqref{09.40}, we obtain
\begin{equation}
\varphi _{n+1}(t)=\min \left\{\int_{t}^{\varrho (\tau )}\varphi _{n}(s-\tau
)ds,1\right\}\geq\min \left\{\int_{t}^{\varrho (\tau )}x _{\tau}(s-\tau
)ds,1\right\}=1, ~t\leq0.
\label{05.37}
\end{equation}
Now, integrating \eqref{03.55}, using \eqref{09.40}, \eqref{9.17},
\begin{eqnarray*}%
x_{\tau}(t) \leq \min \left\{ \int_{t}^{\varrho (\tau )}x_{\tau}(s-\tau
)ds,1\right\} \leq \varphi _{n+1}(t), ~t\in [0,\varrho(\tau)].
\end{eqnarray*}
We conclude that \eqref{9.17} holds for all positive integers $n$.
Using the definition of $\varphi _{n}(t)$ in \eqref{09.40} and \eqref{9.17}, the monotone sequence of functions $\varphi _{n}(t)$ must tend to the solution $x_{\tau }(t)$ of \eqref{03.55}, by monotone convergence. 
\vskip 1pt
We will now similarly show that 
\begin{equation}
y(t)\leq \varphi _{n}(t),~~t\leq \varrho(\tau)
\label{9.18}
\end{equation}%
holds for all nonnegative $n$. It holds for $n=0$ by definition. Assuming \eqref{9.18} for a given nonnegative integer $n$ we will show that \eqref{9.18} holds for $n+1$.
Integrating \eqref{19.20}, using \eqref{09.40}, \eqref{9.18}, and the fact that $\varphi _{n}$ is nonincreasing with respect to time,
\begin{align*}%
y(t) & \leq \min  \left\{  \int_{t}^{\varrho (\tau )}c(s)y(s-\sigma(s)
)ds,1\right\} \leq \min  \left\{ \int_{t}^{\varrho (\tau )}y(s-\sigma(s)
)ds,1\right\}  \\   & \leq \min  \left\{ \int_{t}^{\varrho (\tau )}\varphi _{n}(s-\sigma(s)
)ds,1 \right\} \leq \varphi _{n+1}(t),
\end{align*}
for $t\in [0,\varrho(\tau)]$. This, together with $1\geq \max_{[ -\tau ,\varrho (\tau )]}y$ and \eqref{05.37}, shows that \eqref{9.18} holds for all positive $n$. This, together with the fact that $\varphi_{n}$ tends to $x_{\tau}$, proves \eqref {9.10}.
\vskip 1pt
II) We estimate the derivative of $a(s), s\in [0, \varrho(\tau)]$, using the fact that $x_{\tau }$ is nonincreasing and $\sigma(t)\leq \tau$, 
inequality \eqref {9.10}:
\begin{eqnarray*}
a'(s) &= &-x_{\tau}(s-\tau)+c(s)y(s-\sigma(s))\leq-x_{\tau}(s-\tau)+y(s-\sigma(s)) \\ &  \leq & -x_{\tau}(s-\sigma(s))+y(s-\sigma(s)) \leq 0.
\end{eqnarray*}
Together with \eqref{9.10}, this proves the assertion.
\vskip 1pt
III)
Let us assume $\tau \in (\frac{1}{e},1)$ and $a(\varrho (\tau )-k\tau )\leq \delta,$ where for the integer $k$ we have $1\leq k\leq \left\lfloor 
\frac{\varrho (\tau )}{\tau }\right\rfloor $. By assumption, this holds for $k=1$. We will prove the desired result by induction on $k$. Using $a(t)\geq 0$, which we have shown in II),
\begin{equation*}
\int_{\varrho (\tau )-k\tau }^{\varrho (\tau )-(k-1)\tau}
a'(w)dw=a(\varrho (\tau )-(k-1)\tau )-a(\varrho (\tau )-k\tau )\geq -\delta.
\end{equation*}%
Because we have shown in II) that $a'(s)\leq 0$, denoting by $m(S)$ the Lebesgue measure of any measurable set $S\subset 
\mathbb{R}
$, this implies 
\begin{equation}
\begin{array}{ll}
 \displaystyle  m  \left\{  t\in [ \varrho (\tau )-k\tau ,\varrho (\tau )-(k-1)\tau ]:  \right.  & a'
(t)=-x_{\tau}(t-\tau) \\ & \displaystyle +  \left.  c(t)y(t-\sigma(t))\leq -\sqrt{\delta }  \right\}  \leq \sqrt{\delta }. 
\end{array}
\label{5.99}
\end{equation}%
We consider three cases.
\vskip 1pt
$i$)
 $\varrho (\tau )-(k+1)\tau \geq 0$.

By \eqref{5.99}, for any small $\varepsilon >0,\exists $ $ \displaystyle \eta \in \left[
\varrho (\tau )-k\tau ,\varrho (\tau )-k\tau +\sqrt{\delta }+\varepsilon  \right]$
 such that 
\begin{equation}
 y \left( \eta -\sigma (\eta ) \right)  \geq x_{\tau
}(\eta -\tau )- \sqrt{\delta }.  \label{1.34}
\end{equation}%
We also have 
\begin{equation}
\begin{array}{ll}
y \left( \eta -\sigma (\eta ) \right)  &  \displaystyle \geq x_{\tau } \left( \eta -\tau +\frac{\sqrt{\delta }}{x_{\tau
}(\varrho (\tau )-\tau )} \right) \\ &  \displaystyle \geq x_{\tau } \left( \varrho (\tau
)-k\tau +\sqrt{\delta }+\varepsilon -\tau +\frac{\sqrt{\delta } }{x_{\tau}(\varrho
(\tau )-\tau )} \right),  \end{array} 
\label{4.03}
\end{equation}%
where the first inequality follows from \eqref{1.34} and $-1 \leq x'_{\tau}(t)\leq -x_{\tau}(\varrho(\tau )-\tau ),t\in[0,\varrho(\tau)]$, 
the second inequality by considering the interval where $\eta$ is assumed to exist. By \eqref {9.10}, the first inequality in \eqref{4.03}, and the strictly decreasing nature of $x_{\tau}$ on $[0,\varrho(\tau)]$,
\begin{equation}
\sigma (\eta )\geq \tau -\frac{\sqrt{\delta }}{x_{\tau }(\varrho (\tau )-\tau )}. \label{06.20}
\end{equation}%
Considering the interval where $\eta$ is assumed to exist, using \eqref{06.20} and the triangle inequality, we obtain
\begin{equation}
|\varrho (\tau )-(k+1)\tau-(\eta -\sigma (\eta ))|\leq 
|\varrho (\tau )-k\tau-\eta|+|\tau -\sigma (\eta )|\leq \sqrt{\delta }%
+\varepsilon +\frac{\sqrt{\delta } }{x_{\tau }(\varrho (\tau )-\tau )}.
\label{6.22}
\end{equation}
Moreover, $x_{\tau },y$ are Lipschitz with uniform Lipschitz constant $1$ on $%
[0,\varrho (\tau )]$. We therefore have 
\begin{eqnarray*}
y(\varrho (\tau )-(k+1)\tau ) &\geq &y(\eta -\sigma (\eta ))-\sqrt{\delta }%
-\varepsilon -\frac{\sqrt{\delta } }{x_{\tau }(\varrho (\tau )-\tau )}
\\
&\geq &x_{\tau }(\varrho (\tau )-(k+1)\tau )-2 \left( \sqrt{\delta }%
+\varepsilon +\frac{\sqrt{\delta }}{x_{\tau }(\varrho (\tau )-\tau )}  \right),
\end{eqnarray*}%
where the first inequality follows from \eqref{6.22} and the Lipschitzian nature of $y$, and the second from \eqref{4.03} and the Lipschitzian nature of $x_{\tau}$. We have proved that 
\begin{equation*}
a(\varrho (\tau )-(k+1)\tau)\leq O(\sqrt{\delta}).
\end{equation*}
This concludes the induction step.
\vskip 1pt
$ii$)
 $\varrho (\tau )-(k+1)\tau <0$, $\varrho (\tau )-k\tau >\sqrt{\delta }$.
 \vskip 1pt
We may proceed similarly to the above and, for any sufficiently small $\varepsilon >0$, consider an $\eta $ $\in
[ \tau ,\tau +\sqrt{\delta }+\varepsilon ]$ such that \eqref {1.34}, %
and the first inequality in \eqref {4.03} hold. This again gives \eqref{06.20}, and considering the interval where $\eta$ is now assumed to exist,
\begin{equation}
0\leq \eta -\sigma (\eta )\leq \sqrt{%
\delta }+\varepsilon +\frac{\sqrt{\delta } }{x_{\tau }(\varrho (\tau )-\tau )}. \label{8.47}
\end{equation}
The result follows from \eqref{8.47} and the Lipschitzian nature of $y,x_{\tau}$.
\vskip 1pt
$iii$) $\varrho (\tau )-(k+1)\tau <0,\varrho (\tau )-k\tau \leq \sqrt{\delta }%
.$ 
\vskip 1pt
The result follows immediately from the assumptions and the Lipschitzian nature of $y,x_{\tau}$.

By induction and the monotonicity of $a(t)$, we have the desired result.
\end{proof}
We now prove a strict version of \eqref{9.10}, when comparing $x_{\tau _{1}}$ and  $x_{\tau _{2}}$, where $\frac{1}{e}<\tau _{1}<\tau _{2}\leq 1$. 

\begin{corollary}
\label{Corollary3.19}
If $\frac{1}{e}<\tau _{1}<\tau _{2}\leq 1$ then $%
\varrho (\tau _{1})>\varrho (\tau _{2})$ and $x_{\tau _{1}}(\varrho (\tau
_{1})-t)<x_{\tau _{2}}(\varrho (\tau _{2})-t)$, $t\in (0,\varrho (\tau _{2})]$.
\end{corollary}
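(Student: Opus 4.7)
The first assertion, $\varrho(\tau_1) > \varrho(\tau_2)$, is immediate from the strict monotonicity of $\varrho$ on $(\frac{1}{e}, 1]$ stated in Lemma~\ref{Lemma3.35}.

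For the strict inequality, my plan is to reduce to Lemma~\ref{Lemma7.39} via a translation. I would set $h := \varrho(\tau_1) - \varrho(\tau_2) > 0$ and define $y(s) := x_{\tau_1}(s + h)$ on $[-\tau_2, \varrho(\tau_2)]$, interpreting $x_{\tau_1}$ by its initial datum $\equiv 1$ on $(-\infty, 0]$. A direct computation will give $y'(s) + y(s - \tau_1) = 0$ on $[0, \varrho(\tau_2)]$, with $0 \leq y \leq 1$ and $y(\varrho(\tau_2)) = x_{\tau_1}(\varrho(\tau_1)) = 0$. Lemma~\ref{Lemma7.39} then applies with $\tau = \tau_2$, $\sigma \equiv \tau_1 \leq \tau_2$, $c \equiv 1$, yielding $y \leq x_{\tau_2}$ on $[0, \varrho(\tau_2)]$ together with the nonnegativity and nonincreasing character of $a(s) := x_{\tau_2}(s) - y(s)$. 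The substitution $t = \varrho(\tau_2) - s$ converts this to the weak form of the desired inequality.

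The hard part will be upgrading to strict inequality. I would first note $a(0) = 1 - x_{\tau_1}(h) > 0$ by strict monotonicity of $x_{\tau_1}$ (Lemma~\ref{Lemma3.35}). Suppose, for contradiction, $a(s_0) = 0$ for some $s_0 \in (0, \varrho(\tau_2))$; monotonicity and nonnegativity of $a$ force $a \equiv 0$ on $[s_0, \varrho(\tau_2)]$, i.e.\ $y \equiv x_{\tau_2}$ there. Differentiating and using that both sides of the resulting identity are continuous in $s$, I would deduce $y(s - \tau_1) = x_{\tau_2}(s - \tau_2)$ for every $s \in [s_0, \varrho(\tau_2)]$; in particular, at $s = \varrho(\tau_2)$,
\begin{equation*}
y(\varrho(\tau_2) - \tau_1) = x_{\tau_2}(\varrho(\tau_2) - \tau_2).
\end{equation*}
However, Lemma~\ref{Lemma7.39} gives $y(\varrho(\tau_2) - \tau_1) \leq x_{\tau_2}(\varrho(\tau_2) - \tau_1)$, and the strict decrease of $x_{\tau_2}$ on $[0, \varrho(\tau_2)]$ together with $0 \leq \varrho(\tau_2) - \tau_2 < \varrho(\tau_2) - \tau_1 < \varrho(\tau_2)$ (using $\tau_1 < \tau_2 \leq 1 \leq \varrho(\tau_2)$) yields $x_{\tau_2}(\varrho(\tau_2) - \tau_1) < x_{\tau_2}(\varrho(\tau_2) - \tau_2)$, contradicting the displayed equality. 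Hence $a > 0$ on $[0, \varrho(\tau_2))$, which is equivalent to the claimed strict inequality after the substitution $t = \varrho(\tau_2) - s$.
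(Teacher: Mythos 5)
Your proof is correct, and it reduces to Lemma~\ref{Lemma7.39} via the same translation that underlies the paper's proof, but the strictness step is handled by a genuinely different argument. The paper obtains the strict inequality at the single point $s=\varrho(\tau_2)-\tau_2$ by invoking Part~III of Lemma~\ref{Lemma7.39} (the quantitative bound~\eqref{8.37}) and letting $\delta\rightarrow 0^{+}$, propagates strictness to $t\in[\tau_2,\varrho(\tau_2)]$ by the monotonicity of $a$, and then covers the remaining range $t\in(0,\tau_2)$ by a separate step that differentiates the two functions, compares the shifted arguments, and integrates. Your argument avoids Part~III altogether: you suppose $a(s_0)=0$ for some interior $s_0$, use the nonincreasing, nonnegative character of $a$ (Part~II) to force $a\equiv 0$ on $[s_0,\varrho(\tau_2)]$, feed this back through the delay equation to get the identity $y(\varrho(\tau_2)-\tau_1)=x_{\tau_2}(\varrho(\tau_2)-\tau_2)$, and contradict it using Part~I together with the strict decrease of $x_{\tau_2}$ and $\tau_1<\tau_2$. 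This handles all of $s\in[0,\varrho(\tau_2))$ in a single stroke and relies only on Parts~I--II and Lemma~\ref{Lemma3.35}, so it is more elementary and self-contained than the paper's route. The observation $a(0)=1-x_{\tau_1}(h)>0$ is actually not needed once the contradiction argument is in place (it would already exclude $s_0=0$), but it does no harm. One small bookkeeping remark: when you differentiate $y\equiv x_{\tau_2}$ on $[s_0,\varrho(\tau_2)]$, the equality $y'(s)=x_{\tau_2}'(s)$ holds a.e.; both $-y(s-\tau_1)$ and $-x_{\tau_2}(s-\tau_2)$ are continuous in $s$, so a.e.\ equality upgrades to equality everywhere, exactly as you indicate.
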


\begin{proof}
The fact that $\varrho (\tau _{1})>\varrho (\tau _{2})$ follows from Lemma {\ref{Lemma3.35}}
and $\tau _{1}<\tau _{2}\leq 1$. We also remark that $\varrho (\tau _{2})\geq\tau _{2}$. Applying Lemma {\ref{Lemma7.39}} with $y=x_{\tau _{1}}, \tau=\tau_{2}$, we have 
\begin{equation}
x_{\tau _{1}}(\varrho (\tau
_{1})-\tau_{2})\leq x_{\tau _{2}}(\varrho (\tau _{2})-\tau_{2}).
\label{9.34}
\end{equation}%
If we had equality in \eqref{9.34}, then we may let $\delta\rightarrow 0^{+}$ in relation \eqref {8.37}, contradicting $\varrho (\tau _{1})>\varrho (\tau _{2})$. This proves the strict inequality in \eqref{9.34}. Considering the monotonicity of $a$ proven in Lemma {\ref{Lemma7.39}}, the strict inequality in \eqref{9.34} gives
\begin{equation*}
x_{\tau _{1}}(\varrho (\tau
_{1})-t)<x_{\tau _{2}}(\varrho (\tau _{2})-t),t\in[\tau _{2},\varrho(\tau_{2})].
\end{equation*}%
By continuity, and Lemma {\ref{Lemma7.39}}, for a sufficiently small $\varepsilon>0$,
\begin{eqnarray}
x_{\tau _{1}}(\varrho (\tau
_{1})-t)<x_{\tau _{2}}(\varrho (\tau _{2})-t),~t\in[\tau _{2},\varrho(\tau_{2})+\varepsilon], 
\label{10.18}\\
x_{\tau _{1}}(\varrho (\tau
_{1})-t)\leq x_{\tau _{2}}(\varrho (\tau _{2})-t),~t>\varrho(\tau_{2})+\varepsilon.
\label{10.19}
\end{eqnarray}
By \eqref {03.55} and inequalities \eqref{10.18},\eqref{10.19}, we have
\begin{eqnarray*}
x'_{\tau _{1}}(\varrho (\tau
_{1})-t)>x'_{\tau _{2}}(\varrho (\tau _{2})-t),~t\in (0,\varepsilon)
\notag \\
x'_{\tau _{1}}(\varrho (\tau
_{1})-t)\geq x'_{\tau _{2}}(\varrho (\tau _{2})-t),~t\in (\varepsilon,\varrho (\tau _{2})].
\end{eqnarray*}%
Integrating, we obtain the desired result.
\end{proof}

\section{Critical semicycle length $\Lambda (\tau )$, $\tau \in (\frac{1}{e},1)$}

\subsection{Growth bounds within a semicycle}

The following function describes the maximal growth of solutions between a
zero and its next extremum, and helps us define $%
\Lambda (\tau ),\tau \in (\frac{1}{e},1).$

\begin{lemma}
\label{Lemma4.31}
For each fixed $\tau \in (\frac{1}{e},2]$, the solution $%
\psi _{\tau }$ of the differential equation 
\begin{equation}
\psi _{\tau }^{\prime }(t) =\left\{  \begin{array}{ll} \max \{x_{\tau }(\varrho (\tau
)+t-\tau ),\psi _{\tau }(t)\}, & t\in [ 0,\tau ] \\ \psi _{\tau }(t), & t\in
[ \tau ,\infty ) \end{array} \right. \quad \psi _{\tau }(0)=0,   \label{3.46} \\
\end{equation}
where $x_{\tau }$ is the solution of \eqref {03.55}{\ } and $\varrho $ is
its first root, described in Lemma {\ref{Lemma3.35}}, is strictly increasing
and satisfies $\psi _{\tau }$ $(\infty )=\infty ,$ and $\psi _{\tau }(\tau
)<1,\tau \in (\frac{1}{e},1]$. The strictly increasing, continuous function $%
\Theta (t):=\int_{0}^{t}x_{\tau }(\varrho (\tau )+w-\tau )dw-x_{\tau
}(\varrho (\tau )+t-\tau ),t\in [ 0,\tau ]$ satisfies $\Theta (0)$ $%
\Theta (\tau )<0$ and its unique root $\xi _{\tau }\in (0,\tau )$ is such
that 
\begin{eqnarray}
\int_{0}^{\xi _{\tau }}x_{\tau }(\varrho (\tau )+w-\tau )dw &=&x_{\tau
}(\varrho (\tau )+\xi _{\tau }-\tau ),  \label{4.32} \\
\psi _{\tau }^{\prime }(t) &=&x_{\tau }(\varrho (\tau )+t-\tau ),~t\in
[ 0,\xi _{\tau }],  \notag \\
\psi _{\tau }^{\prime }(t) &=&\psi _{\tau }(t),~t\in [ \xi _{\tau
},\infty ).  \notag
\end{eqnarray}
\end{lemma}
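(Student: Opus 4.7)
The plan is to first pin down the critical time $\xi_\tau$ via analysis of $\Theta$, and then split the construction of $\psi_\tau$ into an integral phase on $[0, \xi_\tau]$ and an exponential phase on $[\xi_\tau, \infty)$.

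I begin by establishing that $\Theta$ is strictly increasing on $[0, \tau]$. Direct differentiation yields $\Theta'(t) = x_\tau(\varrho(\tau) + t - \tau) - x_\tau'(\varrho(\tau) + t - \tau)$. Using the initial condition $x_\tau \equiv 1$ on $(-\infty, 0]$ together with the relation $x_\tau'(s) = -x_\tau(s - \tau)$ from \eqref{03.55}, both candidate summands are nonnegative, and the first is strictly positive for $t \in [0, \tau)$ since $x_\tau > 0$ on $[\varrho(\tau) - \tau, \varrho(\tau))$. The endpoint values are $\Theta(0) = -x_\tau(\varrho(\tau) - \tau) < 0$ and $\Theta(\tau) = \int_{\varrho(\tau) - \tau}^{\varrho(\tau)} x_\tau(s)\,ds > 0$ (using $x_\tau(\varrho(\tau)) = 0$). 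The intermediate value theorem together with strict monotonicity then yields a unique root $\xi_\tau \in (0, \tau)$ realizing \eqref{4.32}.

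Next, I identify the two regimes of $\psi_\tau$. On $[0, \xi_\tau]$, the inequality $\psi_\tau(0) = 0 < x_\tau(\varrho(\tau) - \tau)$ forces the max in \eqref{3.46} to equal $x_\tau(\varrho(\tau) + t - \tau)$, so $\psi_\tau(t) = \int_0^t x_\tau(\varrho(\tau) + w - \tau)\,dw$, and the identity $\psi_\tau(t) - x_\tau(\varrho(\tau) + t - \tau) = -\Theta(t)$ shows this regime persists exactly while $\Theta \leq 0$, with equality at $\xi_\tau$. For $t \in [\xi_\tau, \tau]$, setting $g(t) := \psi_\tau(t) - x_\tau(\varrho(\tau) + t - \tau)$ and taking $\psi_\tau' = \psi_\tau$, we obtain $g(\xi_\tau) = 0$ and $g'(t) = \psi_\tau(t) + x_\tau(\varrho(\tau) + t - 2\tau) \geq \psi_\tau(t) > 0$, so $g > 0$ on $(\xi_\tau, \tau]$, confirming the max is $\psi_\tau$ there and the ODE $\psi_\tau' = \psi_\tau$ is consistent with \eqref{3.46}. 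Strict monotonicity of $\psi_\tau$ then follows from $\psi_\tau' > 0$ on each subinterval, and $\psi_\tau(\infty) = \infty$ from the exponential growth on $[\tau, \infty)$.

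Finally, for $\psi_\tau(\tau) < 1$ with $\tau \in (1/e, 1]$, Lemma~\ref{Lemma3.35} gives $\varrho(\tau) \geq 1 \geq \tau$, so the argument $\varrho(\tau) + w - \tau$ for $w \in [0, \xi_\tau]$ lies in $[\varrho(\tau) - \tau, \varrho(\tau) + \xi_\tau - \tau] \subset [0, \varrho(\tau))$, where $x_\tau < 1$ except possibly at a single endpoint; hence $\psi_\tau(\xi_\tau) < \xi_\tau$ strictly, and $\psi_\tau(\tau) = \psi_\tau(\xi_\tau) e^{\tau - \xi_\tau} < \xi_\tau e^{\tau - \xi_\tau}$. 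Since $f(\xi) := \xi e^{\tau - \xi}$ is strictly increasing on $[0, 1]$ with $f(1) = e^{\tau - 1} \leq 1$, and $\xi_\tau < \tau \leq 1$, we conclude $\psi_\tau(\tau) < f(1) \leq 1$. The main obstacle is the rigorous verification of the regime switch at $\xi_\tau$ on $[0, \tau]$ and the confirmation that the exponential branch remains dominant on $[\xi_\tau, \tau]$; the sign and monotonicity analysis of $\Theta$, viewed as the mismatch between the two candidate right-hand sides of \eqref{3.46}, is the crux of the argument.
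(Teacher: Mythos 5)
The paper states Lemma~\ref{Lemma4.31} without proof, so there is no internal argument to compare against; taken on its own terms, your construction is the natural one and the reasoning is correct. A few remarks. First, there is a sign typo in the identity: since $\psi_\tau(t)=\int_0^t x_\tau(\varrho(\tau)+w-\tau)\,dw$ on the first regime, one has $\psi_\tau(t)-x_\tau(\varrho(\tau)+t-\tau)=\Theta(t)$, not $-\Theta(t)$; your subsequent logic (``persists while $\Theta\le 0$'') is phrased consistently with the correct sign, so the conclusion stands, but the displayed identity should be corrected. Second, in computing both $\Theta'$ and $g'$ you substitute $x_\tau'(s)=-x_\tau(s-\tau)$, which holds only for $s\ge 0$; for $\tau\in(1,2]$ the argument $s=\varrho(\tau)+t-\tau=1+t-\tau$ can be negative for small $t$, in which case $x_\tau\equiv 1$ and $x_\tau'\equiv 0$ there. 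This does not damage either computation ($\Theta'(t)=1>0$ when $s<0$, and $g'(t)=\psi_\tau(t)>0$ when $s<0$), and for $t\ge\xi_\tau$ one can in fact show $s\ge 0$ because $\Theta(\tau-1)=\tau-2\le 0$ forces $\xi_\tau\ge\tau-1$, but you should make this case distinction explicit rather than tacitly apply the delayed-argument formula. Third, the consistency argument on $[\xi_\tau,\tau]$ (assume $\psi_\tau'=\psi_\tau$, verify $g>0$, conclude the max equals $\psi_\tau$) is valid because the right-hand side of \eqref{3.46} is Lipschitz in $\psi$, hence the initial-value problem has a unique solution; it would be cleaner to state this uniqueness explicitly, since otherwise the verification of an ansatz does not by itself identify $\psi_\tau$. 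The $\psi_\tau(\tau)<1$ estimate via $f(\xi)=\xi e^{\tau-\xi}$ increasing on $[0,1]$ with $f(1)=e^{\tau-1}\le 1$ is a tidy way to close the argument and is correct.
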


The following definition is a direct extension of \eqref{06.09}, \cite[Lemma 3.5]{1}, with
which it coincides for $\tau \in [ 1,2]$.

\begin{definition}
\label{Definition4.40}
For each fixed $\displaystyle \tau \in \left( \frac{1}{e},2\right]$, we define $%
\Lambda (\tau )>\varrho (\tau )$, where $\varrho $ is given in Lemma {\ref%
{Lemma3.35}}, by the threshold condition 
\begin{equation*}
\psi _{\tau }(\Lambda (\tau )-\varrho (\tau ))=1,
\end{equation*}%
where $\psi _{\tau }$ is the solution of \eqref {3.46}.
\end{definition}

\begin{remark}
\label{Remark10.36}
As direct consequences of Definition {\ref{Definition4.40}}, \eqref{06.09} and  \eqref{4.32},  we get
\begin{eqnarray}
\Lambda (\tau )>\varrho (\tau )+\tau ,\tau \in (\frac{1}{e},1], \notag\\
\xi_{\tau}>\tau-1,\tau\in[1,2).\label{10.39}
\end{eqnarray}
Inequality \eqref{10.39} guarantees there exists an interval where the delayed argument of the special solution is in the descent of the previous semicycle.
\end{remark}

Recalling a well-known comparison Theorem \cite{laks2, laks}, we see that the solution $\psi_{\tau}$ of equation \eqref {3.46} lies above solutions of the corresponding inequality.
\begin{lemma}[\protect\cite{laks2, laks}]
\label{Lemma3.43}For each fixed $\tau \in (\frac{1}{e},2]$, any nonnegative
solution $y$ of the differential inequality 
\begin{equation*}
y^{\prime }(t) \leq \left\{  \begin{array}{ll} \max \{x_{\tau }(\varrho (\tau )+t-\tau
),y(t)\}, & t\in [ 0,\tau ], \\ y(t),  & t\in [ \tau ,\infty ), \end{array} \right.   \quad y(0)=0,\notag \\
\end{equation*}
where $x_{\tau }$ is the solution of \eqref {03.55} and $\varrho $ is its
first root, described in Lemma {\ref{Lemma3.35}}, satisfies 
\begin{equation*}
y(t)\leq \psi _{\tau }(t),~~t\geq 0,
\end{equation*}%
where  $\psi _{\tau }$ is the solution of \eqref {3.46}. Moreover, the
function $t\mapsto \psi _{\tau }(t)-y(t)$, $t\geq 0$  is nondecreasing.
\end{lemma}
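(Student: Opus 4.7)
The plan is to first establish $d(t) := \psi_{\tau}(t) - y(t) \geq 0$ for all $t \geq 0$, and then derive the monotonicity of $d$ from the explicit form of $\psi_{\tau}'$ and the differential inequality for $y'$, via a brief case analysis based on the sign of $d$.

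For nonnegativity, I introduce the positive part $E(t) := \max\{y(t) - \psi_{\tau}(t), 0\}$, which is absolutely continuous since $u \mapsto u^{+}$ is Lipschitz and $y - \psi_{\tau}$ is absolutely continuous. Writing $g(t) := x_{\tau}(\varrho(\tau) + t - \tau)$ for brevity, the aim is to show $E'(t) \leq E(t)$ almost everywhere, with $E(0) = 0$. At a.e. $t$ where $y(t) \leq \psi_{\tau}(t)$ one has $E = E' = 0$ and the bound is trivial. At a.e.\ $t \in [0, \tau]$ with $y(t) > \psi_{\tau}(t)$,
\[
E'(t) = y'(t) - \psi_{\tau}'(t) \leq \max\{g(t), y(t)\} - \max\{g(t), \psi_{\tau}(t)\},
\]
and a short case split on the ordering of $g(t)$ relative to $\psi_{\tau}(t) < y(t)$ shows the right-hand side is bounded above by $y(t) - \psi_{\tau}(t) = E(t)$. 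For a.e. $t \in [\tau, \infty)$ with $y > \psi_{\tau}$, the bound $E'(t) = y'(t) - \psi_{\tau}(t) \leq y(t) - \psi_{\tau}(t) = E(t)$ follows immediately from $y' \leq y$ and $\psi_{\tau}' = \psi_{\tau}$. Consequently $(E(t) e^{-t})' \leq 0$ a.e., and together with $E \geq 0$ and $E(0) = 0$ a standard Gronwall argument forces $E \equiv 0$, i.e.\ $y \leq \psi_{\tau}$ on $[0, \infty)$.

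With $d \geq 0$ established, monotonicity follows quickly. On $[0, \tau]$, the monotonicity of $u \mapsto \max\{g(t), u\}$ combined with $\psi_{\tau}(t) \geq y(t)$ gives
\[
d'(t) = \psi_{\tau}'(t) - y'(t) \geq \max\{g(t), \psi_{\tau}(t)\} - \max\{g(t), y(t)\} \geq 0 \text{ a.e.},
\]
while on $[\tau, \infty)$ we have $d'(t) \geq \psi_{\tau}(t) - y(t) = d(t) \geq 0$ a.e. In both regimes $d$ is nondecreasing, which yields the second assertion.

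The only real technical step is verifying the elementary inequality $\max\{g, y\} - \max\{g, \psi_{\tau}\} \leq y - \psi_{\tau}$ when $y > \psi_{\tau}$; this is dispatched by splitting into the cases $g \geq y$, $\psi_{\tau} \leq g < y$, and $g < \psi_{\tau}$, and is routine in each. Beyond this elementary bound, the argument is just a Gronwall estimate on the positive-part function $E$, entirely in the spirit of the Lakshmikantham comparison theorems cited in the statement.
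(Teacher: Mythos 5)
Your proof is correct; the paper gives no argument of its own for this lemma (it simply cites the Lakshmikantham--Leela comparison-theorem references), and your Gronwall estimate on the positive part $E(t)=\max\{y(t)-\psi_\tau(t),0\}$, combined with the $1$-Lipschitz monotonicity of $u\mapsto\max\{g(t),u\}$, is exactly the standard argument underlying those results. The one step you invoke tacitly --- that $E'(t)=0$ for a.e.\ $t$ in $\{E=0\}$, needed to handle boundary points of $\{y>\psi_\tau\}$ --- is a routine property of absolutely continuous functions and fits the claimed level of rigor.
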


\begin{lemma}
$\Lambda :(\frac{1}{e},2]\rightarrow 
\mathbb{R}
$ is a continuous strictly decreasing function.
\end{lemma}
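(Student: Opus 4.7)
The identity $\psi_\tau(\Lambda(\tau)-\varrho(\tau))=1$ invites decomposing $\Lambda(\tau)=\varrho(\tau)+T(\tau)$, where $T(\tau)$ is the unique solution of $\psi_\tau(T(\tau))=1$; this is well-defined since $\psi_\tau$ is strictly increasing from $0$ to $\infty$ by Lemma~\ref{Lemma4.31}. By Lemma~\ref{Lemma3.35}, $\varrho$ is continuous and weakly decreasing (strictly on $(\tfrac{1}{e},1]$, constant on $[1,2]$), so the problem reduces to showing $T$ is continuous and strictly decreasing.

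\textbf{Continuity.} Standard continuous dependence of the DDE solution $x_\tau$ of \eqref{03.55} on the delay parameter, combined with continuity of $\varrho$, gives uniform continuity on compacta in $\tau$ of the forcing $f_\tau(t):=x_\tau(\varrho(\tau)+t-\tau)$. A Gr\"onwall-type estimate applied to \eqref{3.46} (using that $(a,b)\mapsto\max(a,b)$ is $1$-Lipschitz in each argument) transfers this to uniform continuity of $\psi_\tau$ in $\tau$ on compacta, and strict monotonicity of $\psi_\tau$ then makes $T(\tau)$ continuous.

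\textbf{Strict monotonicity.} For $\tau_1<\tau_2$ in $(\tfrac{1}{e},2]$, the central step is to establish $f_{\tau_1}\leq f_{\tau_2}$ pointwise, with strict inequality on a set of positive measure lying in $[0,\min(\xi_{\tau_1},\xi_{\tau_2})]$. When $\tau_1,\tau_2\in(\tfrac{1}{e},1]$, Corollary~\ref{Corollary3.19} at $s=\tau_1-t$, combined with strict decrease of $x_{\tau_2}$ on $[0,\varrho(\tau_2)]$ used to align the shift from $\tau_1$ to $\tau_2$, yields
\begin{equation*}
f_{\tau_1}(t)<x_{\tau_2}\bigl(\varrho(\tau_2)-(\tau_1-t)\bigr)<f_{\tau_2}(t),\quad t\in[0,\tau_1).
\end{equation*}
When $\tau_1,\tau_2\in[1,2]$, the explicit formula $f_{\tau_i}(t)=\min\{1,\tau_i-t\}$ (consequence of $\varrho\equiv1$ and $x_\tau(s)=1-s$ on $[0,1]$ for $\tau\geq1$) gives strict inequality on $(\tau_1-1,\tau_1]$, which intersects $[0,\min(\xi_{\tau_1},\xi_{\tau_2})]$ in a positive-measure set via $\xi_{\tau_i}>\tau_i-1$ from Remark~\ref{Remark10.36}. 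The mixed regime $\tau_1<1\leq\tau_2$ is handled by $f_{\tau_1}(0)=x_{\tau_1}(\varrho(\tau_1)-\tau_1)<1=f_{\tau_2}(0)$ and continuity. On $[0,\min(\xi_{\tau_1},\xi_{\tau_2})]$ equation \eqref{3.46} reduces to $\psi_{\tau_i}'=f_{\tau_i}$ by Lemma~\ref{Lemma4.31}, so the strict forcing inequality integrates to a strictly positive gap $\delta_0:=(\psi_{\tau_2}-\psi_{\tau_1})(\min(\xi_{\tau_1},\xi_{\tau_2}))>0$. Since $\psi_{\tau_1}$ is a subsolution of the $\tau_2$-equation, Lemma~\ref{Lemma3.43} forces $\psi_{\tau_2}-\psi_{\tau_1}$ to be nondecreasing beyond this point, hence $\geq\delta_0$ thereafter. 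Finally, $T(\tau_1)>\xi_{\tau_1}\geq\min(\xi_{\tau_1},\xi_{\tau_2})$, the strict inequality following from $\psi_{\tau_1}(\xi_{\tau_1})=f_{\tau_1}(\xi_{\tau_1})<1$ for $\tau_1<2$ (by Lemma~\ref{Lemma4.31} when $\tau_1\leq1$, and by $f_{\tau_1}(\xi_{\tau_1})=\tau_1-\xi_{\tau_1}<1$ via Remark~\ref{Remark10.36} when $\tau_1\in[1,2)$). We therefore obtain $\psi_{\tau_2}(T(\tau_1))\geq1+\delta_0>1$, whence $T(\tau_2)<T(\tau_1)$ by strict monotonicity of $\psi_{\tau_2}$. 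Combined with $\varrho(\tau_1)\geq\varrho(\tau_2)$, this yields $\Lambda(\tau_1)>\Lambda(\tau_2)$.

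\textbf{Main obstacle.} The delicate part is organizing the strict forcing comparison consistently across the three regimes and the boundary values $\tau=1,2$ (where the structure of $\varrho,\xi_\tau,x_\tau$ changes qualitatively), so that the strict inequality is always captured in the ``integral'' window $[0,\min(\xi_{\tau_1},\xi_{\tau_2})]$ rather than in the exponential phase $t\geq\xi_{\tau_i}$, where the $\max$ in \eqref{3.46} becomes $\psi_\tau$ itself and pointwise forcing comparisons are invisible. Corollary~\ref{Corollary3.19} and the direct calculation for $\tau\geq1$ handle each regime individually, but the alignment of their conclusions at $\tau=1$ and the verification that $T(\tau_1)$ lies past the gap-producing interval requires the careful bookkeeping outlined above.
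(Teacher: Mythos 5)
Your overall approach matches the paper's: you decompose $\Lambda=\varrho+T$, reduce strict monotonicity of $T$ to the pointwise forcing comparison $f_{\tau_1}\le f_{\tau_2}$ with strictness somewhere in $[0,\min(\xi_{\tau_1},\xi_{\tau_2})]$, and obtain the latter from Corollary~\ref{Corollary3.19} for $\tau\in(\tfrac{1}{e},1]$. The one genuine divergence is cosmetic: for $\tau\in[1,2]$ the paper simply cites \cite[Lemma 3.4]{1}, whereas you re-derive it from the explicit formula $f_\tau(t)=\min\{1,\tau-t\}$; that is perfectly legitimate.

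However, your ``mixed regime'' $\tau_1<1\le\tau_2$ is not established as written, and this is the one real gap. From $f_{\tau_1}(0)<1=f_{\tau_2}(0)$ and continuity you only obtain the strict inequality $f_{\tau_1}<f_{\tau_2}$ on a small neighborhood of $0$, not the full pointwise bound $f_{\tau_1}\le f_{\tau_2}$ on $[0,\min(\xi_{\tau_1},\xi_{\tau_2})]$ that your integration step requires. In particular for $t>\tau_2-1$, where $f_{\tau_2}(t)=\tau_2-t$, comparing against $f_{\tau_1}(t)=x_{\tau_1}(\varrho(\tau_1)+t-\tau_1)$ is not immediate and would need a version of Corollary~\ref{Corollary3.19} extended past $\tau_2=1$, which you do not invoke. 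The fix is that this case is superfluous: once strict monotonicity of $\Lambda$ holds separately on $(\tfrac{1}{e},1]$ and on $[1,2]$ (two intervals sharing the endpoint $\tau=1$), global strict monotonicity on $(\tfrac{1}{e},2]$ is automatic by transitivity through $\Lambda(1)$. This is in effect what the paper does, handling the two pieces separately and never comparing $\tau_1<1<\tau_2$ directly. So the ``main obstacle'' you identify — aligning the three regimes globally — is a self-imposed complication rather than a genuine difficulty.
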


\begin{proof}
The result is known \cite[Lemma 3.4]{1} for the restriction to $[1,2]$. Let
us consider the restriction to $(\frac{1}{e},1].$ We first show
monotonicity. Assume $\frac{1}{e}<\tau _{1}<\tau _{2}\leq 1$. In virtue of
Lemma {\ref{Lemma3.35}}, Lemma {\ref{Lemma7.39}}, Lemma {\ref{Lemma3.43}}, $\psi _{\tau _{1}}(t)\leq
\psi _{\tau _{2}}(t),~t\geq 0$ and $\Lambda (\tau _{1})\geq \Lambda (\tau
_{2})$. We also have $\psi _{\tau _{1}}(t)<\psi _{\tau _{2}}(t),t\in (0,\xi
_{\tau _{1}}]\cap (0,\xi _{\tau _{2}}]$ by Corollary {\ref{Corollary3.19}}.
As $\psi _{\tau _{1}}^{\prime }(t)\leq \psi _{\tau _{2}}^{\prime }(t),t\in
[ 0,+\infty )$ we have the strict inequality $\psi _{\tau
_{1}}(t)<\psi _{\tau _{2}}(t),t>0$. This implies that $\Lambda (\tau
_{1})>\Lambda (\tau _{2})$.

Continuity follows from the continuity of $\varrho $ and the continuous
dependence on parameters for \eqref {03.55} and \eqref
{3.46}.
\end{proof}

\begin{lemma}
\label{Lemma10.46}Assume $|p(t)|\equiv 1$ and a fixed $\tau _{\max }\in (%
\frac{1}{e},2]$ and $x$ an $\alpha -$oscillating solution of \eqref {06.18}.
If 
\begin{equation*}
\alpha \leq \Lambda (\tau _{m }),
\end{equation*}%
where $\Lambda(\tau_{m})$ is defined in Definition {\ref{Definition4.40}},
then $x$ is bounded. If, further, $\alpha <\Lambda (\tau _{m})$, then $x$
tends to zero at infinity.
\end{lemma}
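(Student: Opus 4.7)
By Lemma~\ref{Lemma11.30} we may assume $x\geq 0$ with modified coefficient $\widetilde p$ of absolute value $1$. Let $\{t_n\}$ be the consecutive zeros of $x$ with $t_{n+1}-t_n\leq \alpha$ for all $n$ large enough, and on the $n$-th semicycle $[t_n,t_{n+1}]$ let $t_n^*$ and $M_n:=x(t_n^*)$ denote the argmax and the max. The plan is to prove inductively that $\{M_n\}$ is eventually nonincreasing, via two estimates: a descent bound $t_{n+1}-t_n^*\geq \varrho(\tau_m)$, and an ascent bound $M_n\leq M_{n-1}\,\psi_{\tau_m}(t_n^*-t_n)$, which combined with Definition~\ref{Definition4.40} force $M_n\leq M_{n-1}$.

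For the descent bound, normalize $y(s):=x(t_n^*+s)/M_n$ on $[-\tau_m,t_{n+1}-t_n^*]$; the inductive hypothesis $M_j\leq M_n$ for $j<n$ ensures $0\leq y\leq 1$ throughout this interval. Writing $c(s):=-\widetilde p(t_n^*+s)$ and $\sigma(s):=t_n^*+s-\tau(t_n^*+s)$, one obtains $y'+c\,y(\cdot-\sigma)=0$ with $c\geq 0$ on the descent, $|c|\leq 1$, $\sigma\in[0,\tau_m]$. The iterative majorant $\varphi_k$ constructed in the proof of Lemma~\ref{Lemma7.39} dominates $y$ and tends to $x_{\tau_m}$; since $x_{\tau_m}$ first vanishes at $\varrho(\tau_m)$, so does any solution $y$ it majorizes. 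For the ascent bound, set $u(t):=x(t)/M_{n-1}$; by the descent bound applied to the previous semicycle together with Lemma~\ref{Lemma7.39}(I), $u(t)\leq x_{\tau_m}(t-t_{n-1}^*)$ on $[t_{n-1}^*,t_n]$. On the current ascent $u'(t)=\widetilde p(t)u(\tau(t))\leq u(\tau(t))$; splitting by whether $\tau(t)\in[t_n-\tau_m,t_n)$ or $\tau(t)\in[t_n,t]$, and using $t_n-t_{n-1}^*\geq\varrho(\tau_m)$ together with the monotonicity of $x_{\tau_m}$, one obtains for $v(s):=u(t_n+s)$
\begin{equation*}
v'(s)\leq\max\{x_{\tau_m}(\varrho(\tau_m)+s-\tau_m),\,v(s)\},\ s\in[0,\tau_m];\quad v'(s)\leq v(s),\ s\geq\tau_m.
\end{equation*}
This is precisely the differential inequality of Lemma~\ref{Lemma3.43}, which yields $v(s)\leq \psi_{\tau_m}(s)$, so $M_n\leq M_{n-1}\,\psi_{\tau_m}(t_n^*-t_n)$. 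Combining with the descent bound and Definition~\ref{Definition4.40},
\begin{equation*}
M_n\leq M_{n-1}\,\psi_{\tau_m}(\alpha-\varrho(\tau_m))\leq M_{n-1}\,\psi_{\tau_m}(\Lambda(\tau_m)-\varrho(\tau_m))=M_{n-1},
\end{equation*}
proving boundedness. When $\alpha<\Lambda(\tau_m)$, strict monotonicity of $\psi_{\tau_m}$ gives $M_n\leq c\,M_{n-1}$ with $c:=\psi_{\tau_m}(\alpha-\varrho(\tau_m))<1$, so $M_n\to 0$ geometrically and $x(t)\to 0$.

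The main obstacle is the ascent bound, specifically matching the residual values of the previous descent precisely with the forcing term $x_{\tau_m}(\varrho(\tau_m)+\cdot-\tau_m)$ that defines $\psi_{\tau_m}$ in ODE~\eqref{3.46}. When the previous descent strictly exceeds $\varrho(\tau_m)$, Lemma~\ref{Lemma7.39}(I) does not apply directly (its hypothesis demands $y$ vanish at $\varrho(\tau)$); this case is handled by the monotonicity of $\psi_{\tau_m}$, since a longer prior descent leaves a shorter current ascent, preserving the estimate. A secondary subtlety is securing the inductive hypothesis $M_j\leq M_n$ needed to bound $y\leq 1$ at the descent step, which follows from the non-increasing nature of $\{M_n\}$ built up by the induction.
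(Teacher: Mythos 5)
Your proof uses the same two ingredients as the paper's --- an ascent bound governed by $\psi_{\tau_m}$ and a descent bound governed by $x_{\tau_m}$, $\varrho(\tau_m)$, both via Lemma~\ref{Lemma7.39} and Lemma~\ref{Lemma3.43} --- but the inductive framing built around the per-semicycle maxima $M_n$ introduces circularities that the paper's contradiction argument is specifically designed to avoid, and as written your inductive step does not close. The paper anchors the comparison to a fixed reference $M_0:=\sup_{[t-\tau_m-\varrho(\tau_m),\,t]}|x|$ for a zero $t$, defines $z$ as the first time $|x|$ would exceed $\psi_{\tau_m}(\alpha-\varrho(\tau_m))M_0$, and derives that the offending semicycle would have length $>\alpha$; this at once gives boundedness and, by only ever comparing against $M_0$, yields Corollary~\ref{Corollary10.28} for free.

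The concrete gaps in your version: (i) For the descent estimate you normalize by $M_n$ and claim ``$M_j\leq M_n$ for $j<n$ ensures $0\leq y\leq 1$,'' but you also attribute this to ``the non-increasing nature of $\{M_n\}$''; these two statements are opposite. Under the (desired) nonincreasing hypothesis one has $M_{n-1}\geq M_n$, so on $[t_n^*-\tau_m,t_n]$, which may reach into semicycle $n-1$, the normalized $y=x/M_n$ can exceed $1$, and Lemma~\ref{Lemma7.39} does not apply. The descent lower bound $t_{n+1}-t_n^*\geq\varrho(\tau_m)$ is therefore only available under the \emph{opposite} hypothesis $M_n\geq M_{n-1}$; this is precisely why the paper works by contradiction (the assumption that the bound is violated is what forces the argmax to lie more than $\tau_m$ past the last zero, so that $|x|\leq |x(w)|$ holds throughout the delay window and the normalization is legitimate). (ii) In the ascent step, the $x_{\tau_m}$-envelope furnished by Lemma~\ref{Lemma7.39} for the preceding descent is anchored at the \emph{zero} $t_n$, i.e.\ $u(t)\leq x_{\tau_m}\bigl(t-(t_n-\varrho(\tau_m))\bigr)$, not at the prior maximum $t_{n-1}^*$ as you wrote; when $t_n-t_{n-1}^*>\varrho(\tau_m)$, your stated bound $u(t)\leq x_{\tau_m}(t-t_{n-1}^*)$ is simply false (it asserts the previous descent decays at least as fast as $x_{\tau_m}$ measured from its maximum, which need not hold). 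Your fallback heuristic ``a longer prior descent leaves a shorter current ascent'' also does not hold, since those two intervals belong to different semicycles and are not in a trade-off under the $\alpha$-oscillation constraint. (iii) Even granting the descent bound, your ascent normalization $u=x/M_{n-1}$ requires $|x|\leq M_{n-1}$ on $[t_n-\varrho(\tau_m)-\tau_m,t_n]$, an interval that can reach into semicycle $n-2$ where, under the nonincreasing hypothesis, $|x|$ may be as large as $M_{n-2}\geq M_{n-1}$. Replacing $M_{n-1}$ with $\sup_{[t_n-\varrho(\tau_m)-\tau_m,\,t_n]}|x|$ would fix this --- but that is exactly the paper's $M_0$, at which point you have essentially rederived the paper's argument.
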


\begin{proof}
For $\tau _{m }\in [ 1,2]$ the result follows from \cite[Theorem 3.6%
]{1}. We shall proceed similarly in the case of $\tau _{m }\in (\frac{1}{e},1)$%
. We can assume that $\alpha >\max \{\Lambda (\tau _{m })-\varrho (\tau
_{m }),\varrho (\tau _{m })\}$, and that $\alpha $ is great enough so
that $\alpha -\varrho (\tau _{m})>\tau _{m }$, where $\varrho$ is the first root of $x_{\tau }$ the
solution of \eqref {03.55}, described in Lemma {\ref{Lemma3.35}}. Considering a point $t$ such that $x(t)=0$, we may without loss of generality assume 
$\underset{v\in [ t-\tau _{m}-\varrho (\tau _{m }),t]}{\sup }%
|x(v)|>0$. We define 
\begin{align}
z& =\inf\left\{ \zeta \in [ t,+\infty ):|x(\zeta )|>\psi _{\tau
}(\alpha -\varrho (\tau _{m }))\underset{v\in [ t-\tau _{m
}-\varrho (\tau _{m }),t]}{\sup }|x(v)|\right\} , \label{3.49}\\
\tilde{t}& =\sup \{\zeta \in [ t,z]:x(\zeta )=0\}, \notag \\
m& =\inf \{\zeta \in [ z,+\infty ):x(\zeta )=0\}, \notag
\end{align}%
where $\psi _{\tau }$ is defined in Lemma {\ref{Lemma4.31}}. Using Lemma {\ref{Lemma7.39}} and \eqref{06.18}, we have 
\begin{equation*}
|x^{\prime }|(t)\leq \max \left\{\underset{v\in [ t-\tau _{m}-\varrho
(\tau _{m}),t]}{\sup }|x(v)|x_{\tau _{m }}(\varrho (\tau _{m
})+t-\tau _{m}),\int_{\tilde{t}}^{t}|x^{\prime }|(w)dw \right\},~t\in [ 
\tilde{t},z].
\end{equation*}%
In virtue of Lemma {\ref{Lemma3.43}}, this inequality gives 
\begin{equation}
|x(q)|\leq \psi _{\tau _{m}}(q-\tilde{t})\underset{v\in [ t-\tau
_{m }-\varrho (\tau _{m }),t]}{\sup }|x(v)|,q\in [ \tilde{t},z]. \label{4.48}
\end{equation}%
Considering the definition of $z$ \eqref{3.49} and \eqref{4.48}, we have  \begin{equation}w-\tilde{t}>z-\tilde{t}\geq \alpha -\varrho (\tau _{m })>\tau
_{m }, \label{4.53}
\end{equation}%
 where \begin{equation*}|x(w)|=\max_{[\tilde{t},m]}|x|>\psi _{\tau _{m
}}(\alpha -\varrho (\tau _{m}))\underset{v\in [ t-\tau _{m
}-\varrho (\tau _{m}),t]}{\sup }|x(v)|,w\in [ \tilde{t},m].\end{equation*}
Applying Lemma {\ref{Lemma7.39}} and using \eqref{4.53}, $m-w\geq
\varrho (\tau _{m})$. But then $m-\tilde{t}>a,$ a contradiction.
\end{proof}

Noting that only consideration of the initial interval $[t-\tau _{m
}-\varrho (\tau _{m }),t]$ is necessary in the above proof as well as in
that of \cite[Theorem 3.6]{1} when $|p(t)|\equiv 1$ and $\tau _{m }\in
[ 1,2]$,  we have the following Corollary.

\begin{corollary}
\label{Corollary10.28}Assume that $x$ is an $\alpha$-oscillating solution
of \eqref {06.18} for $|p(t)|\equiv 1$ and a fixed $\tau _{m}\in (\frac{1}{e}%
,2]$. Consider a point $t_{0} $ such that $x(t_{0})=0$. Then $|x(t)|\leq 
\underset{v\in [ t_{0} -\tau _{m}-\varrho (\tau _{m}),t_{0} ]}{\max }%
|x(v)|,t\geq t_{0} $, where $\varrho $ is the first root of the
solution of $x_{\tau }$ to \eqref {03.55} described in Lemma {\ref{Lemma3.35}}.
\end{corollary}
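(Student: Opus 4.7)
The approach is to iterate the key bound from the proof of Lemma \ref{Lemma10.46} across successive zeros of $x$, and to observe that the bound produced there depends solely on the supremum of $|x|$ over the initial interval $[t_0-\tau_m-\varrho(\tau_m),t_0]$ — this is precisely the observation emphasized by the authors in the sentence preceding the Corollary.

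First, I would revisit the proof of Lemma \ref{Lemma10.46} and extract from it that, for any zero $t$ of $x$ with subsequent semicycle $[\tilde t,m]$, the argument culminating in \eqref{4.48}, together with the contradiction step that forces any potential overshoot point $z$ to satisfy $z-\tilde t\leq \alpha-\varrho(\tau_m)$, yields
\[
|x(q)|\leq \psi_{\tau_m}(\alpha-\varrho(\tau_m))\cdot\sup_{v\in[t-\tau_m-\varrho(\tau_m),t]}|x(v)|,\qquad q\in[\tilde t,m].
\]
Under the implicit hypothesis $\alpha\leq \Lambda(\tau_m)$ inherited from Lemma \ref{Lemma10.46}, Definition \ref{Definition4.40} gives $\psi_{\tau_m}(\alpha-\varrho(\tau_m))\leq 1$, so $|x(q)|$ is bounded by the initial-interval supremum at $t$ on the whole semicycle.

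Next, set $M_0:=\max_{v\in[t_0-\tau_m-\varrho(\tau_m),t_0]}|x(v)|$, and enumerate the zeros of $x$ at or after $t_0$ as $t_0=s_0<s_1<s_2<\cdots$; $\alpha$-oscillation guarantees their existence and divergence to $+\infty$. By the previous step applied at $t=t_0$, $|x(t)|\leq M_0$ on $[t_0,s_1]$. I would then proceed by induction on $k$: assuming $|x(t)|\leq M_0$ on $[t_0,s_k]$, the new initial interval $[s_k-\tau_m-\varrho(\tau_m),s_k]$ is contained in $[t_0-\tau_m-\varrho(\tau_m),s_k]$, where the bound $|x|\leq M_0$ already holds (partly by definition of $M_0$, partly by the inductive hypothesis). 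Reapplying the per-semicycle estimate with $t=s_k$ propagates the bound to $[s_k,s_{k+1}]$, completing the inductive step.

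The main technical point, rather than a genuine obstacle, is verifying that the proof of Lemma \ref{Lemma10.46} truly depends only on local data on the interval $[t-\tau_m-\varrho(\tau_m),t]$ and not on a global supremum of $|x|$; this amounts to tracking that every invocation of Lemma \ref{Lemma7.39} in the original proof is against the single initial supremum at the current zero. Once that is confirmed, the induction is routine and the uniform bound $|x(t)|\leq M_0$ propagates to all $t\geq t_0$.
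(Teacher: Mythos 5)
Your proposal captures the paper's intent exactly: the bound produced in the proof of Lemma \ref{Lemma10.46} is expressed solely in terms of $\sup_{v\in[t_0-\tau_m-\varrho(\tau_m),t_0]}|x(v)|$, and combined with the (implicit) hypothesis $\alpha\leq\Lambda(\tau_m)$, so that $\psi_{\tau_m}(\alpha-\varrho(\tau_m))\leq 1$ by Definition \ref{Definition4.40}, the corollary follows — which is precisely the content of the sentence preceding it. Two small remarks: the contradiction step in Lemma \ref{Lemma10.46} actually forces the putative overshoot point $z$ to satisfy $z-\tilde{t}\geq\alpha-\varrho(\tau_m)$ (not $\leq$), whence $m-\tilde{t}>\alpha$ and the semicycle would be too long; and your subsequent induction over the zeros $s_k$, while valid, is redundant, since that contradiction already rules out any overshoot for \emph{all} $q\geq t_0$ at once and so directly yields $|x(q)|\leq\psi_{\tau_m}(\alpha-\varrho(\tau_m))M_0\leq M_0$ on $[t_0,\infty)$.
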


\subsection{Sharpness of $\Lambda (\protect\tau )$}

The proof of the following Lemma is identical to that of {\cite[Theorem 2.1]{lillo}} and therefore is omitted.
\begin{lemma} [\protect {\cite[Theorem 2.1]{lillo}}]
\label{Lemma10.47}Let $x$ be an oscillatory solution of \eqref {06.18}, with $\limsup_{t\rightarrow \infty }|x(t)|>0,|p(t)|\equiv 1$.
Then for every $\varepsilon >0$
there exists an unbounded oscillatory function $y$, which possesses the same roots as $x$, solving \begin{equation*}
y'(t)=\overset{\sim }{p}(t)y(\tau(t)),
\end{equation*}
with 
\begin{eqnarray*}
1\leq|\overset{\sim }{p}(t)|\leq
1+\varepsilon, \quad
 {\rm sgn}[\overset{\sim }{p}(t)]\equiv {\rm sgn}[p(t)].
\end{eqnarray*}
\end{lemma}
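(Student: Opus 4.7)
The plan is to construct $y$ by multiplying $x$ by a geometrically growing sequence of scalars on a partition of the real line whose cut points are chosen among the zeros of $x$. Since $x$ is oscillatory, its zero set is unbounded above, and I would first extract an increasing subsequence of zeros $T_{1}<T_{2}<\cdots\to\infty$ satisfying $T_{i+1}-T_{i}\geq \tau_{m}$ for every $i$; this is possible because $\tau_{m}:=\sup(t-\tau(t))$ is finite in the setting of the paper and the zeros of $x$ are arbitrarily large. This spacing guarantees that for any $t\in(T_{i},T_{i+1})$, the delayed argument $\tau(t)\in[t-\tau_{m},t]$ lies either in $(T_{i-1},T_{i}]$ or in $(T_{i},t]$, and never earlier than $T_{i-1}$.

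I would then define
$$
y(t)=\begin{cases} x(t), & t\leq T_{1}, \\ (1+\varepsilon)^{i}\,x(t), & t\in(T_{i},T_{i+1}],\ i\geq 1. \end{cases}
$$
Because $x(T_{i})=0$, the pieces glue into an absolutely continuous function whose zero set coincides with that of $x$, so $y$ is oscillatory. Differentiating piecewise and substituting $x^{\prime}(t)=p(t)x(\tau(t))$, one obtains for a.e. $t\in(T_{i},T_{i+1})$,
$$
y^{\prime}(t)=(1+\varepsilon)^{i}\,p(t)\,x(\tau(t))=\widetilde{p}(t)\,y(\tau(t)),
$$
where $\widetilde{p}(t)=p(t)$ when $\tau(t)\in(T_{i},t]$ and $\widetilde{p}(t)=(1+\varepsilon)\,p(t)$ when $\tau(t)\in(T_{i-1},T_{i}]$. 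Since the spacing keeps these as the only two possibilities, the bounds $1\leq|\widetilde{p}(t)|\leq 1+\varepsilon$ and $\mathrm{sgn}\,\widetilde{p}(t)\equiv\mathrm{sgn}\,p(t)$ follow at once from $|p|\equiv 1$.

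To see that $y$ is unbounded, set $M:=\limsup_{t\to\infty}|x(t)|>0$ and pick $s_{k}\to\infty$ with $|x(s_{k})|\geq M/2$. Each such $s_{k}$ lies in some $(T_{i_{k}},T_{i_{k}+1}]$, and since $T_{i}\to\infty$ together with $s_{k}\to\infty$ forces $i_{k}\to\infty$, one has $|y(s_{k})|\geq(1+\varepsilon)^{i_{k}}M/2\to\infty$.

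The construction is essentially mechanical once the spacing condition $T_{i+1}-T_{i}\geq\tau_{m}$ is in place. The only conceptual point---and what I expect to be the single subtle step---is that this spacing prevents the delayed argument from crossing more than one scaling jump, which is precisely what keeps the multiplicative correction $\widetilde{p}/p$ inside $\{1,1+\varepsilon\}$; without it, corrections across many cut points could accumulate and one would be forced into more delicate, subexponentially growing scaling factors.
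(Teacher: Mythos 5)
Your construction — rescale $x$ by $(1+\varepsilon)^i$ on a partition whose cut points are zeros of $x$ spaced far enough apart that the delayed argument can cross at most one cut point — is correct and is the standard argument; the paper itself omits the proof and defers to Lillo's Theorem~2.1, which uses essentially this same slow-scaling idea. Your verification of the sign and magnitude bounds on $\widetilde{p}$, the continuity gluing at the $T_i$ (using $x(T_i)=0$), and the unboundedness via $\limsup_{t\to\infty}|x(t)|>0$ are all sound.

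One imprecision worth fixing: you justify the spacing $T_{i+1}-T_i\geq\tau_m$ by asserting that $\tau_m:=\sup(t-\tau(t))$ is finite ``in the setting of the paper,'' but finiteness of $\tau_m$ is not among the standing assumptions for Lemma~\ref{Lemma10.47}; those assumptions only give $\tau(t)\leq t$ and $\lim_{t\to\infty}\tau(t)=\infty$. The repair is immediate from the latter: having chosen $T_i$, pick $S_{i+1}$ with $\tau(t)>T_i$ for all $t>S_{i+1}$, and take $T_{i+1}$ to be a zero of $x$ exceeding $\max(S_{i+1},T_i)$. Then for $t\in(T_{i+1},T_{i+2})$ one still has $\tau(t)\in(T_i,t]$, so $\widetilde{p}(t)\in\{p(t),(1+\varepsilon)p(t)\}$ exactly as in your argument, without any uniform bound on the delay.
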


The following Example, together with the method of Lemma {\ref{Lemma10.47}} and {\ref{appendixB}} Lemma {\ref{Lemma10.03}},
shows that the bounds in Lemma {\ref{Lemma10.46}} are sharp.

\begin{example}
\label{Example6.34} For a fixed $\tau \in (\frac{1}{e},1]$, define%
\begin{eqnarray*}
\varpi _{\tau }(t) &=&\psi _{\tau }(t),t\in [ 0,\Lambda (\tau
)-\varrho (\tau )] \\
\varpi _{\tau }(t) &=&x_{\tau }(t-\left( \Lambda (\tau )-\varrho (\tau
)\right) ),t\in [ \Lambda (\tau )-\varrho (\tau ),\Lambda (\tau )] \\
\varpi _{\tau }(t+\Lambda (\tau )) &=&\varpi _{\tau }(t), ~t\in 
\mathbb{R},
\end{eqnarray*}%
where $\varrho $ is the first root of $x_{\tau }$\ the solution of \eqref
{03.55}, described in Lemma {\ref{Lemma3.35}},  $\psi _{\tau }$ is the
solution of \eqref {3.46}, $\Lambda (\tau _{m})$ is defined in Definition~{%
\ref{Definition4.40}}. Then $\varpi _{\tau }$ is an $\Lambda (\tau )-$%
oscillating, $\Lambda (\tau )-$periodic solution of \eqref {06.18} with $%
|p(t)|\equiv 1,$ $\tau _{m}=\tau $.
\end{example}

\begin{corollary}
For fixed $\tau \in (\frac{1}{e},1]$ and arbitrary $\alpha >\Lambda (\tau )$%
, $r>\tau, $ there exist unbounded $\alpha -$oscillating solutions of %
\eqref{06.18},{\ w}ith $|p(t)|\equiv 1,$ $\tau _{m }\leq r$, where $%
\Lambda(\tau_{m})$ is defined in Definition~{\ref{Definition4.40}}.
\end{corollary}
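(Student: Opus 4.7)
The plan is to combine the threshold periodic example $\varpi_\tau$ of Example~\ref{Example6.34} with the perturbation Lemma~\ref{Lemma10.47} and the Ladas-type change of variables from \ref{appendixB}, following the same sharpness scheme used in \cite{1} for $\tau_m \in [1,2]$.

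First, I would take as seed the $\Lambda(\tau)$-periodic, $\Lambda(\tau)$-oscillating solution $\varpi_\tau$ supplied by Example~\ref{Example6.34}. Its maximum delay is $\tau_m = \tau$, its coefficient satisfies $|p(t)| \equiv 1$, and $\limsup_{t\to\infty}|\varpi_\tau(t)|$ equals its periodic maximum, which is strictly positive. The hypotheses of Lemma~\ref{Lemma10.47} are therefore met: for any prescribed $\varepsilon > 0$, that lemma produces an unbounded oscillatory function $y$ having the same roots as $\varpi_\tau$, solving $y'(t) = \widetilde{p}(t) y(\tau(t))$ with $1 \leq |\widetilde{p}(t)| \leq 1+\varepsilon$ and ${\rm sgn}\,\widetilde{p} \equiv {\rm sgn}\, p$. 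Since the zero set of $y$ coincides with that of $\varpi_\tau$, the maximum semicycle length of $y$ is still at most $\Lambda(\tau)$, and its deviating argument is still bounded by $\tau$.

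Next, I would apply the Ladas change of variables (\ref{appendixB}, Lemma~\ref{Lemma10.03}) to $y$ by introducing the new time $s(t) = \int_0^t |\widetilde{p}(u)|\,du$. This normalizes $|\widetilde{p}|$ to $1$ while dilating every time interval by a factor in $[1,1+\varepsilon]$. Under this transformation, the resulting function is an unbounded oscillatory solution of \eqref{06.18} with $|p|\equiv 1$, whose maximum delay is bounded by $(1+\varepsilon)\tau$ and whose maximum semicycle length is bounded by $(1+\varepsilon)\Lambda(\tau)$, since both the delay and the distance between consecutive zeros are stretched by the same factor $\int_{\cdot}^{\cdot}|\widetilde{p}|\,du \in [1,1+\varepsilon]$ times the original length.

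Finally, given $\alpha > \Lambda(\tau)$ and $r > \tau$, I would choose $\varepsilon > 0$ so small that simultaneously $(1+\varepsilon)\tau \leq r$ and $(1+\varepsilon)\Lambda(\tau) \leq \alpha$; such an $\varepsilon$ exists by the strict inequalities. The resulting function is then the desired unbounded $\alpha$-oscillating solution with $\tau_m \leq r$ and $|p|\equiv 1$. The only technical point worth checking carefully is that the Ladas rescaling acts with one and the same bound $1+\varepsilon$ on both the deviating argument $t - \tau(t)$ and the gap between consecutive zeros of $y$, so that both constraints can be satisfied by a single choice of $\varepsilon$; this is immediate from the integral form of the time change, but needs to be spelled out to harmonize the two estimates.
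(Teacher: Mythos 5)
Your proof is correct and follows exactly the route the paper indicates in the sentence preceding Example~\ref{Example6.34}: seed with the periodic $\varpi_\tau$, perturb via Lemma~\ref{Lemma10.47} to create unboundedness at the cost of $|\widetilde p|\in[1,1+\varepsilon]$, then normalize with the change of variables of Lemma~\ref{Lemma10.03}, which stretches both the deviating argument and the intervals between consecutive zeros by at most the factor $1+\varepsilon$, so a single small $\varepsilon$ keeps $\tau_m\le r$ and the semicycle length $\le\alpha$.
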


\section{Main Results}
 From Lemma {\ref{Lemma7.39}} and the proof of Lemma {\ref{Lemma10.46}}, we obtain:

\begin{corollary}
\label{Corollary11.31}Assume that $x$ is an
oscillatory solution of \eqref {06.18} for $|p(t)|\equiv 1$ and a fixed $%
\tau _{m}\in (\frac{1}{e},2]$. Consider a point $t_{0} $ such that $x(t_{0} )=0$%
. Then 
\begin{equation*}
|x(t)|\leq \underset{v\in [ t_{0} -\tau _{m}-\varrho (\tau _{m}),t_{0} ]}{%
\max }|x(v)| \left\{  \begin{array}{ll} \psi _{\tau_{m} }(t-t_{0} ), & t\geq t_{0} , \\ x_{\tau_{m}
}(\varrho (\tau_{m} )-(t_{0} -t)), & t\in [t_{0}-\tau _{m},t_{0}] ,\end{array} \right.
\end{equation*}
where $\varrho $ is the first root of  the solution $x_{\tau_{m} }$ of \eqref {03.55}  described in Lemma~{\ref{Lemma3.35} }, $\psi _{\tau_{m} }$ is the
solution of \eqref {3.46}. Furthermore,%
\begin{align*}
|x^{\prime }(t)| & \leq   \underset{v\in [ t_{0}-\tau _{m}-\varrho (\tau
_{m}),t_{0}]} \max |x(v)|  \left\{  \begin{array}{ll} \psi _{\tau_{m} }(\tau (t)-t_{0}), & \tau
(t)\geq t_{0}, \\ x_{\tau_{m} }(\varrho (\tau )-(t_{0}-\tau (t))), & \tau (t)\leq
t_{0}, \end{array} \right.  \\ & \leq \underset{v\in [ t_{0}-\tau _{m}-\varrho (\tau _{m}),t_{0}]%
}{\max }|x(v)|\psi _{\tau_{m} }^{\prime }(t-t_{0}),~t\geq t_{0}.
\end{align*}
\end{corollary}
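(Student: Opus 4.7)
The plan is to derive the two-sided spatial bound by separate backward and forward comparison arguments (using Lemmas~\ref{Lemma7.39} and~\ref{Lemma3.43} respectively), and then deduce the derivative estimates from $|x'(t)|=|x(\tau(t))|$. By Lemma~\ref{Lemma11.30} I would work throughout with $|x|$, which satisfies an equation of the same type with coefficient $\widetilde{p}$ of absolute value $1$. Set $M:=\max_{v\in[t_0-\tau_m-\varrho(\tau_m),t_0]}|x(v)|$, which we may assume strictly positive (else the conclusion is trivial).

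For the backward bound, I would apply Lemma~\ref{Lemma7.39}(I) to the translated rescaling $y(s):=|x|(s+t_0-\varrho(\tau_m))/M$ on $s\in[-\tau_m,\varrho(\tau_m)]$, with $\sigma(s):=(s+t_0-\varrho(\tau_m))-\tau(s+t_0-\varrho(\tau_m))$ and $c(s):=-\widetilde{p}(s+t_0-\varrho(\tau_m))$. All hypotheses ($y(\varrho(\tau_m))=0$, $0\leq y\leq 1$, $|c|\leq 1$, $0\leq\sigma\leq\tau_m$) follow immediately from the choice of $M$ and the standing bound $t-\tau(t)\leq\tau_m$. Lemma~\ref{Lemma7.39}(I) gives $y(s)\leq x_{\tau_m}(s)$ on $[0,\varrho(\tau_m)]$, which in the original variables is $|x(t)|\leq Mx_{\tau_m}(\varrho(\tau_m)-(t_0-t))$ on $[t_0-\varrho(\tau_m),t_0]$; on the remaining interval $[t_0-\tau_m,t_0-\varrho(\tau_m)]$ (nonempty only for $\tau_m>1$) the argument of $x_{\tau_m}$ is nonpositive, so $x_{\tau_m}=1$ and the bound is the trivial $|x|\leq M$.

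For the forward bound, I would mimic the proof of Lemma~\ref{Lemma10.46}. With $\phi(s):=|x|(t_0+s)/M$, set $h(s):=\int_0^s|\phi'(w)|\,dw$; then $h$ is nondecreasing with $h(0)=0$ and $|\phi|\leq h$. For $s\in[0,\tau_m]$ I split on whether $\tau(t_0+s)\leq t_0$ or $>t_0$: in the first case, the backward bound combined with $t_0-\tau(t_0+s)\leq\tau_m-s$ and the monotonicity of $x_{\tau_m}$ gives $h'(s)\leq x_{\tau_m}(\varrho(\tau_m)+s-\tau_m)$; in the second, $h'(s)\leq h(\tau(t_0+s)-t_0)\leq h(s)$. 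For $s\geq\tau_m$ the delayed argument is automatically $\geq t_0$, so $h'(s)\leq h(s)$. Thus $h$ satisfies the differential inequality of Lemma~\ref{Lemma3.43} with $\tau=\tau_m$, yielding $|\phi|\leq h\leq\psi_{\tau_m}$, i.e.\ $|x(t)|\leq M\psi_{\tau_m}(t-t_0)$ on $[t_0,\infty)$.

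The piecewise bound on $|x'(t)|$ is then immediate from $|x'(t)|=|x(\tau(t))|$ and the two spatial bounds. The sharper uniform estimate $|x'(t)|\leq M\psi_{\tau_m}'(t-t_0)$ reduces to two pointwise inequalities which I would verify by a short case split on $s:=t-t_0$ relative to $\xi_{\tau_m}$, using the explicit formula for $\psi_{\tau_m}'$ in Lemma~\ref{Lemma4.31}. For $s\leq\xi_{\tau_m}$ the defining relation \eqref{4.32} yields $\psi_{\tau_m}(r)\leq x_{\tau_m}(\varrho(\tau_m)+s-\tau_m)=\psi_{\tau_m}'(s)$ for all $r\leq s$, and $x_{\tau_m}(\varrho(\tau_m)-(t_0-\tau(t)))\leq x_{\tau_m}(\varrho(\tau_m)+s-\tau_m)$ by monotonicity and the constraint $t_0-\tau(t)\leq\tau_m-s$; for $s>\xi_{\tau_m}$ one instead uses $\psi_{\tau_m}'(s)=\psi_{\tau_m}(s)\geq\psi_{\tau_m}(\xi_{\tau_m})$ to dominate both candidate right-hand sides. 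The main obstacle is purely bookkeeping: correctly matching the several regimes (delayed argument before or after $t_0$; $s$ small or large compared to $\xi_{\tau_m}$) and verifying consistency at the junction $s=\xi_{\tau_m}$, where the two branches of $\psi_{\tau_m}'$ meet by \eqref{4.32}; everything else is mechanical from Lemmas~\ref{Lemma7.39}, \ref{Lemma3.43}, and \ref{Lemma4.31}.
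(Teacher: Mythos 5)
Your proof is correct and follows essentially the same route the paper intends: the backward bound is a direct application of Lemma~\ref{Lemma7.39}(I) after a translation and rescaling, the forward bound reconstructs the $h(s)=\int_0^s|\phi'|$ argument from the proof of Lemma~\ref{Lemma10.46} and appeals to Lemma~\ref{Lemma3.43}, and the derivative bounds come from $|x'(t)|=|x(\tau(t))|$ together with the two spatial bounds and the explicit structure of $\psi_{\tau_m}'$ from Lemma~\ref{Lemma4.31}. The paper states the Corollary without an explicit proof, citing exactly the ingredients you used.
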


The following Lemma is the analog of Proposition {\ref{Proposition11.24}}, {%
\cite[Theorem 3.1, Theorem 3.3]{lillo}}, for the case of sign-changing
feedback. The broad strokes are inspired by the proof of {\cite[Theorem
3.1]{lillo}}.

\begin{lemma}
\label{Lemma10.50}
Let $x$ be nonnegative, $\Lambda (\tau )-$oscillating
solution of \eqref {06.18} with $|p(t)|\equiv 1,\tau _{m}=\tau \in (\frac{1}{%
e},2)$, where $\Lambda (\tau _{m})$ is defined in Definition {\ref%
{Definition4.40}}. Assume that 
\begin{eqnarray}
x(t_{0}) &=&0,  \notag\\
\mu  &=&\inf \{t>t_{0}:x(t)=x(\mu )\},  \notag \\
x(\mu ) &\geq &\left( \max_{[t_{0}-\tau -\varrho (\tau ),t_{0}]}x\right)
(1-\delta ),\mu >t_{0},  \label{6.32}
\end{eqnarray}%
where $\delta\in(0,1)$ is sufficiently small, $\varrho $ is the first root of the solution $x_{\tau }$ of \eqref {03.55}  described in Lemma~{\ref{Lemma3.35}}. 
Then,%
\begin{equation*}
\left\vert \frac{x(t)}{\max_{[t_{0}-\tau -\varrho (\tau
),t_{0}]}x }-\varpi _{\tau }(t-\tilde{t})\right\vert \leq O \left( \delta
^{2^{-1-\left\lfloor \frac{\varrho (\tau )}{\tau }\right\rfloor }} \right), ~t\in
[ \tilde{t}-\varrho (\tau ),\tilde{t}+\Lambda (\tau )],
\end{equation*}%
where $\tilde{t}:=\sup \{t<\mu :x(t)=0\}$ 
and $O(\delta ^{2^{-1-\left\lfloor \frac{%
\varrho (\tau )}{\tau }\right\rfloor }})$ depends on the fixed $\tau \in (%
\frac{1}{e},2).$
\end{lemma}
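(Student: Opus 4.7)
The plan is to normalize $\bar{x}(t) := x(t)/M$ with $M := \max_{[t_0-\tau-\varrho(\tau),t_0]} x$, so that $\max \bar{x} = 1$ on the initial interval while $\bar{x}(\mu) \geq 1-\delta$, and then to establish the desired approximation separately on the three subintervals of $[\tilde{t}-\varrho(\tau),\tilde{t}+\Lambda(\tau)]$: the preceding descent $[\tilde{t}-\varrho(\tau),\tilde{t}]$, the current ascent $[\tilde{t},\tilde{t}+\Lambda(\tau)-\varrho(\tau)]$, and the current descent $[\tilde{t}+\Lambda(\tau)-\varrho(\tau),\tilde{t}+\Lambda(\tau)]$. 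Throughout, the upper bounds $\bar{x}(t)\leq \psi_\tau(t-\tilde{t})$ on the current ascent and $\bar{x}(t)\leq x_\tau(\varrho(\tau)-(\tilde{t}-t))$ on the preceding descent, supplied by Corollary~\ref{Corollary11.31}, together with the nondecreasing nature of the defect $\psi_\tau(\cdot-\tilde{t})-\bar{x}$ supplied by Lemma~\ref{Lemma3.43}, will form the main scaffolding.

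For the current semicycle the hypothesis $\bar{x}(\mu)\geq 1-\delta$ combined with $\psi_\tau(\mu-\tilde{t})\leq 1$ bounds the defect at $\mu$ by $\delta$, and by the monotonicity of the defect this propagates back along the ascent, delivering $|\bar{x}(t)-\psi_\tau(t-\tilde{t})|=O(\delta)$ on $[\tilde{t},\mu]$; strict monotonicity of $\psi_\tau$ and Lipschitz estimates also give $|\mu-\tilde{t}-(\Lambda(\tau)-\varrho(\tau))|=O(\delta)$ and, analogously, $|\mu'-\mu-\varrho(\tau)|=O(\delta)$ for the next zero $\mu'$ of $\bar{x}$. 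For the current descent, setting $y(s):=\bar{x}(\mu'-\varrho(\tau)+s)$ and invoking Lemma~\ref{Lemma7.39} Part II, the defect $a=x_\tau-y$ satisfies $a(0)=O(\delta)$ and is nonincreasing, so $|a(s)|=O(\delta)$ throughout. Combined, these two steps already deliver $|\bar{x}(t)-\varpi_\tau(t-\tilde{t})|=O(\delta)$ on $[\tilde{t},\tilde{t}+\Lambda(\tau)]$, strictly inside the required bound.

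The bottleneck is the preceding descent $[\tilde{t}-\varrho(\tau),\tilde{t}]$, since the hypothesis does not tie $\bar{x}(\tilde{t}-\varrho(\tau))$ to $1=x_\tau(0)$ directly: we only know that the maximum on the initial interval equals $1$, while the previous peak $\mu_{-1}$ may be shifted from $\tilde{t}-\varrho(\tau)$. The plan is to apply Lemma~\ref{Lemma7.39} Part III with $\sqrt{\delta}$ in place of $\delta$ to the shifted comparison $y(u):=\bar{x}(\tilde{t}-\varrho(\tau)+u)$ against $x_\tau$. The prerequisite $a(\varrho(\tau)-\tau)=O(\sqrt{\delta})$ follows from a Chebyshev-type argument analogous to case ($i$) in the proof of Lemma~\ref{Lemma7.39} Part III: the current-ascent derivative at $\tilde{t}^+$ equals $\bar{x}(\tilde{t}-\tau)$ (since $p=1$ on the ascent and $\tau(\tilde{t}^+)=\tilde{t}-\tau$ in the extremal configuration), while $\psi_\tau'(0^+)=x_\tau(\varrho(\tau)-\tau)$, and the integral $O(\delta)$ defect estimate from the previous paragraph converts into the pointwise bound $|x_\tau(\varrho(\tau)-\tau)-\bar{x}(\tilde{t}-\tau)|=O(\sqrt{\delta})$, with a loss of exactly one square root. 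Lemma~\ref{Lemma7.39} Part III then yields $|x_\tau(u)-\bar{x}(\tilde{t}-\varrho(\tau)+u)|=O((\sqrt{\delta})^{2^{-\lfloor\varrho(\tau)/\tau\rfloor}})=O(\delta^{2^{-1-\lfloor\varrho(\tau)/\tau\rfloor}})$ on $[0,\varrho(\tau)-\tau]$, and a direct Lipschitz argument covers the leftover $[\varrho(\tau)-\tau,\varrho(\tau)]$.

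The main obstacle is precisely this Chebyshev-type conversion from an integral $O(\delta)$ defect on the current ascent to a pointwise $O(\sqrt{\delta})$ bound at the single time $\tilde{t}-\tau$, together with the verification that the delay and sign structure on the short interval adjacent to $\tilde{t}$ aligns with the matching-derivative argument above. Careful bookkeeping is required to verify that the square-root loss is exactly one, so that combined with the iterated extraction of square roots internal to Lemma~\ref{Lemma7.39} Part III we land precisely on the exponent $2^{-1-\lfloor\varrho(\tau)/\tau\rfloor}$, rather than anything worse.
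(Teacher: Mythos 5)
Your overall plan matches the paper's: normalize, split $[\tilde{t}-\varrho(\tau),\tilde{t}+\Lambda(\tau)]$ into preceding descent, current ascent, and current descent, get an $O(\delta)$ defect bound on the ascent (and thence on the current descent via Lemma~\ref{Lemma7.39} Part~II), and handle the preceding descent by first extracting an $O(\sqrt{\delta})$ bound at one point and then propagating with Lemma~\ref{Lemma7.39} Part~III, absorbing one extra square root in the exponent.

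The bottleneck step is where your argument is not quite right as written. You justify $a(\tilde t-\tau)=O(\sqrt\delta)$ by saying that the ascent derivative at $\tilde t^+$ ``equals $\bar x(\tilde t-\tau)$ \dots in the extremal configuration.'' But for an arbitrary solution $x$ of \eqref{06.18}, nothing pins $\tau(\tilde t^+)$ to $\tilde t-\tau$, nor $p$ to $+1$; the identity $\bar x'(\tilde t^+)=\bar x(\tilde t-\tau)$ simply fails in general, and a Chebyshev estimate on a set of small measure does not yield a bound at the single point $\tilde t-\tau$ without further work. The paper bridges exactly this by running the argument contrapositively: assume $a(\tilde t-\tau)\ge\gamma M$ (respectively $a(\tilde t-\varrho(\tau))\ge\gamma M$ when $\tau\in[1,2)$); by Lipschitz-ness and the monotonicity of $a$ this depresses $\bar x$ uniformly by $\Omega(\gamma)$ on all of $[\tilde t-\tau,\tilde t]$ (see \eqref{10.52}); Corollary~\ref{Corollary11.31} then caps $\bar x'(t)$ on a short window $[\tilde t,\tilde t+c\gamma]$ \emph{regardless} of where $\tau(t)$ lands (either in the depressed preceding descent, giving the bound directly, or in the ascent, where \eqref{10.54} caps it by the same quantity), so $\bar x'$ falls short of $\psi_\tau'$ by $\Omega(\gamma)$ on a window of length $\Omega(\gamma)$; integrating and comparing with the $O(\delta)$ defect \eqref{12.41} forces $\gamma=O(\sqrt\delta)$. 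This case analysis on $\tau(t)$ is the content your ``careful bookkeeping'' must supply; you should also separate $\tau\in[1,2)$ (where $\varrho(\tau)=1\le\tau$, Part~III is vacuous, and the defect point is $\tilde t-\varrho(\tau)$) from $\tau\in(\tfrac1e,1)$ (where $\tilde t-\tau$ is interior to the descent and Part~III is genuinely invoked). With that replacement your proof goes through.
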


\begin{proof}
Define%
\begin{align*}
\tilde{t}& =\sup \{\zeta \in [ t_{0},\mu]:x(\zeta )=0\}, \\
m& =\inf\{\zeta \in [ \mu ,+\infty ):x(\zeta )=0\}, \\
w& =\inf \left\{ \zeta \in [ t_{0},+\infty ):x(\zeta )=\max_{[%
\tilde{t},m]}x\right\}.
\end{align*}%
By \eqref{6.32},
\begin{equation}
\max_{[ t_{0}-\tau -\varrho (\tau ),t_{0}]}x (1-\delta
)\leq x(w)\leq  \max_{[ t_{0}-\tau -\varrho (\tau
),t_{0}]}x . \label{6.33}
\end{equation}%
By Corollary {\ref{Corollary11.31}},
\begin{equation}
w-\tilde{t}\geq \psi _{\tau }^{-1}(1-\delta ).  \label{3.11}
\end{equation}%
where $\psi _{\tau }^{-1}:[0,\infty)\rightarrow[0,\infty)$ is the inverse function of $\psi _{\tau }$ defined in Lemma {\ref{Lemma4.31}}. Together with Lemma {\ref{Lemma7.39}}, this implies 
\begin{equation}
\varrho (\tau )-\delta \leq m-w\leq \Lambda (\tau )-\psi _{\tau
}^{-1}(1-\delta ).  \label{3.12}
\end{equation}%
By Definition {\ref%
{Definition4.40}}, $\psi _{\tau }^{\prime }(\Lambda (\tau )-\varrho (\tau ))=1,$ and considering the expansion of $\psi _{\tau }^{-1}$ near $1$, for small $\delta $ 
\begin{equation}
\Lambda (\tau )-\varrho (\tau )-\psi _{\tau }^{-1}(1-\delta )\leq \delta
+o(\delta ), \label{7.41}
\end{equation}%
where $o(\delta )$ depends on the fixed $\tau $. Together with \eqref{3.11}, %
\eqref{3.12}, this gives $|m-\varrho (\tau )-w|\leq \delta +o(\delta )$  and using \eqref{6.33},
\begin{equation}
x(m-\varrho (\tau ))\geq  (\max_{[ t_{0}-\tau -\varrho (\tau
),t_{0}]}x) (1-2\delta +o(\delta )).\label{7.44}
\end{equation}%
Applying Lemma {\ref{Lemma7.39}}, the last inequality gives
\begin{equation*}
0\leq x_{\tau }(t-m+\varrho (\tau ))-\frac{1}{\max_{[t_{0}-\tau
-\varrho (\tau ),t_{0}]}x }x(t)\leq   2\delta +o(\delta ), ~t\in
[ m-\varrho (\tau ),m].
\end{equation*}%
Furthermore, using \eqref{3.11},  
\eqref{3.12}, \eqref{7.41},  we get
\begin{equation}
\Lambda (\tau )-(m-\tilde{t})\leq\Lambda (\tau )-(m-w)-(w-\tilde{t}) \\
\leq \Lambda (\tau )-\varrho (\tau )+\delta +o(\delta )-\psi _{\tau
}^{-1}(1-\delta ) \\
\leq2\delta +o(\delta ). \label{7.42}
\end{equation}
Inequalities \eqref{7.44}, \eqref{7.42}, together with Lemma {\ref{Lemma3.43}, give for }$t\in [ \tilde{t},%
\tilde{t}{+}\Lambda (\tau )-\varrho (\tau )]$%
\begin{equation}
0 \leq \psi _{\tau }(t-\tilde{t})-\frac{x(t)}{
\max_{[t_{0}-\tau-\varrho (\tau),t_{0}]}x}
\leq 4\delta +o(\delta ).\label{12.41}
\end{equation}%
Now, by Lemma {\ref{Lemma7.39}}, for $t\in [ \tilde{t}-\varrho (\tau ),\tilde{t}]$%
\begin{equation*}
a(t):=\left( \max_{[t_{0}-\tau -\varrho (\tau ),t_{0}]}x\right)
x_{\tau }(\varrho (\tau )+t-\tilde{t})-x(t)
\end{equation*}%
is nonincreasing.

We distinguish between two cases. 
\vskip 1pt
I) $\tau \in [ 1,2)$. In this case, $\varrho(\tau)=1$. Assuming%
\begin{equation*}
a(\tilde{t}-1)\geq \gamma \left( \max_{[ t_{0}-\tau
-1,t_{0}]}x\right) >0,
\end{equation*}%
where $\gamma>0$, then using \eqref{06.18}, we obtain %
\begin{equation}
x(t)\leq \left( 1-\frac{\gamma }{2}\right) \left( \max_{[t_{0}-\tau
-1,t_{0}]}x\right) ,  ~t\in [ \tilde{t}-1,\tilde{t}]. 
\label{11.05}
\end{equation}%
In virtue of Remark {\ref{Remark10.36}}, for a fixed $\tau \in [ 1,2)$ we may consider $\gamma $ sufficiently
small so that $\gamma\in(0,\xi_{\tau}-(\tau -1))$ and hence
\begin{equation}
\psi' _{\tau }(t-\tilde{t})=1-(t-(\tilde{t}+\tau -1)),~t\in \left[  \tilde{t}+\tau -1,\tilde{t%
}+\tau -1+\frac{\gamma }{2} \right].\label{11.06}
\end{equation}%
We also assume $\gamma$ small enough so that
\begin{equation}
\psi _{\tau }(t-\tilde{t})\leq 1-\frac{\gamma }{2}, ~t\in \left[ \tilde{t}+\tau -1,\tilde{t%
}+\tau -1+\frac{\gamma }{2} \right].\label{11.07}
\end{equation}%
 Then \eqref{11.05}, \eqref{11.06}, \eqref{11.07}, in conjunction with Corollary {\ref{Corollary11.31}}, give
\begin{align*}
\frac{x^{\prime }(t)}{\max_{[t_{0}-\tau -1,t_{0}]}} & \leq \max \left\{ \frac{\max_{[ \tilde{t}-1,\tilde{t}]}x}{\max_{[t_{0}-\tau -1,t_{0}]}x},\psi _{\tau }(t-\tilde{t}) \right\}\leq \left( 1-\frac{\gamma }{2}\right)
\\ & \leq \psi' _{\tau }(t-\tilde{t})-(\tilde{t%
}+\tau -1+\frac{\gamma }{2}-t),~ t\in \left[ \tilde{t}+\tau -1,\tilde{t%
}+\tau -1+\frac{\gamma }{2} \right]
\end{align*}%
Integrating the last inequality and using Corollary {\ref{Corollary11.31}}, taking into account inequality \eqref{12.41}, we obtain 
\begin{equation*}
1-4\delta +o(\delta )\leq \frac{x(\Lambda (\tau )-\varrho (\tau )+\tilde{t})%
}{ \max_{[t_{0}-\tau -1,t_{0}]}x }\leq 1-\frac{1}{2}\left( \frac{\gamma }{2%
}\right) ^{2}.
\end{equation*}%
This immediately gives $\gamma \leq \sqrt{32\delta +o(\delta )}$.

II) $\tau \in (\frac{1}{e},1)$. Assuming%
\begin{equation*}
a(\tilde{t}-\tau )\geq \gamma \max_{[ t_{0}-\tau -\varrho
(\tau ),t_{0}]}x >0
\end{equation*}%
then by the Lipschiptian nature of $x,x_{\tau}$ ($x_{\tau}$ is of Lipschitz constant 1 and $x$ of Lipschitz constant  $\max_{[t_{0}-\tau -\varrho (\tau ),t_{0}]}x$), we have
\begin{equation*}
\frac{x(t)}{ \max_{[t_{0}-\tau -\varrho (\tau ),t_{0}]}x }\leq
x_{\tau }(\varrho (\tau )-(\tilde{t}-t))-\frac{\gamma }{2},  ~t\in  \left[ 
\tilde{t}-\tau ,\tilde{t}-\tau +\frac{\gamma }{4}  \right].
\end{equation*}%
Because $x_{\tau}$ is nonincreasing and of Lipschitz constant 1, this implies
\begin{eqnarray}
\frac{x(t)}{\left( \max_{[t_{0}-\tau -\varrho (\tau ),t_{0}]}x\right) } & \leq
\max \left\{ x_{\tau }(\varrho (\tau )-(\tilde{t}-t))-\frac{\gamma }{2},x_{\tau }(\varrho (\tau )-\tau+\frac{\gamma }{4}) \right\}  \nonumber \\ & \leq x_{\tau }(\varrho (\tau )-\tau+\frac{\gamma }{4}),
~~~t\in \left[ \tilde{t}-\tau,\tilde{t}  \right].
\label{10.52}
\end{eqnarray}%
For a fixed $\tau \in (\frac{1}{e},1)$ we may consider $\gamma $
sufficiently small so that also also $\gamma\in(0,\xi_{\tau})$ and using  $-1 \leq x'_{\tau}(t)\leq -x_{\tau}(\varrho(\tau )-\tau ),t\in[0,\varrho(\tau)],$
\begin{equation}
\psi' _{\tau }(t-\tilde{t})=x_{\tau } \left( \varrho (\tau  \right)-(\tilde{t}-t)-\tau )\geq x_{\tau } \left( \varrho (\tau )-\tau+\frac{\gamma }{4} \right)-\frac{\gamma }{8}x_{\tau}(\varrho(\tau )-\tau ),
~t\in \left[ \tilde{t},\tilde{t}+\frac{\gamma }{8} \right].
\label{10.53}
\end{equation}%
Lastly, we may (and do) assume $\gamma$ small enough so that
\begin{equation}
\psi _{\tau }(t-\tilde{t})\leq x_{\tau }\left( \varrho (\tau )-\tau+\frac{\gamma }{4} \right),~t\in \left[ \tilde{t},\tilde{t}+\frac{\gamma }{8} \right].
\label{10.54}
\end{equation}
By Corollary {\ref{Corollary11.31}}, and inequalities \eqref{10.52}, \eqref{10.53}, \eqref{10.54}, we have
\begin{equation*}
\frac{x^{\prime }(t)}{ \max_{[t_{0}-\tau -\varrho (\tau
),t_{0}]}x }\leq \psi' _{\tau }(t-\tilde{t})-\frac{\gamma }{8}x_{\tau}(\varrho(\tau )-\tau ),~t\in \left[ \tilde{t},\tilde{t}+\frac{\gamma }{8} \right].
\end{equation*}%
Integrating the last inequality and using Corollary {\ref{Corollary11.31}}, taking into account inequality \eqref{12.41}, we obtain 
\begin{equation*}
1-4\delta +o(\delta )\leq \frac{x(\Lambda (\tau )-\varrho (\tau )+\tilde{t})%
}{ \max_{[t_{0}-\tau -\varrho (\tau ),t_{0}]}x }\leq 1-\frac{%
\gamma ^{2}}{64}x_{\tau}(\varrho(\tau )-\tau ).
\end{equation*}%
This gives $\gamma \leq \sqrt{\frac{256}{x_{\tau}(\varrho(\tau )-\tau )}\delta +o(\delta )}$.

The result follows by considering Lemma {\ref{Lemma7.39}}.
\end{proof}
We notice that in the case $\tau_{m}=2$, the assumptions of Lemma {\ref{Lemma10.50}} need to be slightly modified. Essentially, the same bound holds, but only in a neighborhood of infinity, where the solution $x$ is uniformly bounded by $\limsup_{t\rightarrow \infty }|x(t)|+\varepsilon$. More precisely, we have the following statement:

\begin{lemma}
\label{Lemma10.49}Fix an $\varepsilon>0$. Let $x$ be a nonnegative, $2-$oscillating
solution of \eqref {06.18} with $|p(t)|\equiv 1,\tau _{m}=2$. Assume that 
\begin{eqnarray}
x(t_{0}) &=&0,  \notag\\
\mu  &=&\inf \{t>t_{0}:x(t)=x(\mu)\},  \notag \\
x(\mu ) &\geq &\left( \max_{[t_{0}-3-\varepsilon,t_{0}]}x\right)
(1-\delta ),\mu >t_{0},  \label{6.32}
\end{eqnarray}%
where $\delta\in(0,1)$ is sufficiently small. Then,%
\begin{equation*}
\left\vert \frac{x(t)}{\max_{[t_{0}-3-\varepsilon,t_{0}]}x }-\varpi _{\tau }(t-\tilde{t})\right\vert \leq O \left( \delta
^{2^{-1}} \right),~t\in
[ \tilde{t}-1,\tilde{t}+2],
\end{equation*}%
where $\tilde{t}:=\sup \{t<\mu :x(t)=0\}$.
\end{lemma}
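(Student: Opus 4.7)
The plan is to closely mirror the proof of Lemma \ref{Lemma10.50}, specialized to $\tau=2$, where $\varrho(2)=1$, $\Lambda(2)=2$, and $\psi_2(t)=t$ on $[0,1]$. This specialization makes several of the asymptotic expansions used in Lemma \ref{Lemma10.50} exact: most importantly, $\psi_2^{-1}(1-\delta)=1-\delta$ and $\Lambda(2)-\varrho(2)-\psi_2^{-1}(1-\delta)=\delta$ with no $o(\delta)$ correction. First I would define $\tilde t,m,w$ exactly as in Lemma \ref{Lemma10.50} and use Corollary \ref{Corollary11.31} and Lemma \ref{Lemma7.39} to obtain $w-\tilde t\geq 1-\delta$ and $1-\delta\leq m-w\leq 1+\delta$, hence $|m-1-w|\leq\delta$, together with $x(m-1)\geq M_\varepsilon(1-2\delta)$, where I write $M_\varepsilon:=\max_{[t_0-3-\varepsilon,t_0]}x$. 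A further application of Lemma \ref{Lemma7.39} on $[m-1,m]$ and of Lemma \ref{Lemma3.43} on $[\tilde t,\tilde t+1]$ then produces the estimate
\[
\left|\frac{x(t)}{M_\varepsilon}-\varpi_2(t-\tilde t)\right|=O(\delta),\quad t\in[\tilde t,\tilde t+2],
\]
handling the current ascent-descent cycle in exactly the manner of Lemma \ref{Lemma10.50}.

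The interesting step is the preceding descent $[\tilde t-1,\tilde t]$, for which I set
\[
a(t):=M_\varepsilon\,x_2(1+t-\tilde t)-x(t),\quad t\in[\tilde t-1,\tilde t].
\]
By Corollary \ref{Corollary11.31} and Lemma \ref{Lemma7.39} II, $a$ is nonnegative and nonincreasing, so it suffices to bound $a(\tilde t-1)$. Assuming for contradiction $a(\tilde t-1)\geq \gamma M_\varepsilon$, the Lipschitz argument of Lemma \ref{Lemma10.50} case I (using $x(\tilde t)=0$, $x(\tilde t-1)\leq (1-\gamma)M_\varepsilon$ and $|x'|\leq M_\varepsilon$) gives $\max_{[\tilde t-1,\tilde t]}x\leq (1-\gamma/2)M_\varepsilon$.

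The main obstacle, and the reason the parameter $\varepsilon$ appears in the statement, is that the quadratic-gain step of Lemma \ref{Lemma10.50} case I exploits a subinterval $[\tilde t+\tau-1,\tilde t+\tau-1+\gamma/2]$ on which $\psi_\tau'$ is strictly below $1$ — a window that collapses at $\tau=2$ because $\xi_2=\tau-1=1$. My plan is to substitute for this vanishing window the interval $[\tilde t+1,\tilde t+1+\min(\gamma,\varepsilon)/2]$: since $\tau_m=2$, for $t$ in this interval the delayed argument $\tau(t)$ can be pushed into $[\tilde t-1,\tilde t-1+\min(\gamma,\varepsilon)/2]$, where $x(\tau(t))\leq (1-\gamma/2+O(\gamma^2))M_\varepsilon$ by the Lipschitz bound above. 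Combined with Corollary \ref{Corollary11.31} and the first-step estimate controlling $x(\tau(t))$ for other locations of $\tau(t)$, this forces $|x'(t)|\leq (1-\gamma/4)M_\varepsilon$ on an interval of length of order $\min(\gamma,\varepsilon)$. Integrating and comparing against the lower bound $x(\tilde t+1)\geq M_\varepsilon(1-O(\delta))$ from the first step produces a quadratic-in-$\gamma$ deficit balanced against a linear-in-$\delta$ gain, forcing $\gamma=O(\sqrt{\delta})$. Monotonicity of $a$ then propagates this bound to all of $[\tilde t-1,\tilde t]$, which combined with the estimate on $[\tilde t,\tilde t+2]$ concludes the proof.

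The hardest point is the last one: formulating the substitute quadratic-gain estimate precisely, since the delayed argument $\tau(t)$ on $[\tilde t+1,\tilde t+1+\min(\gamma,\varepsilon)/2]$ can land in three qualitatively different regions (the preceding descent, the current ascent, or the history window opened by the extra $\varepsilon$), and the bound obtained must be uniform across all three. The role of $\varepsilon$ is precisely to guarantee that the measure of the favorable region (preceding descent, where $x\leq(1-\gamma/2)M_\varepsilon$) is positive; this is exactly what fails in the degenerate limit $\varepsilon\to 0$, consistent with Example \ref{Example11.28}.
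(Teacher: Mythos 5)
Your overall scaffold matches the paper (extend the Lipschitz estimate on the descent using the extra $\varepsilon$ in the history window, then balance a quadratic-in-$\gamma$ deficit against a linear-in-$\delta$ gain), and your observation that $\xi_2 = \tau-1 = 1$ collapses the window used in case I of Lemma~\ref{Lemma10.50} is exactly the difficulty. But the subinterval you propose for the quadratic gain does not work, and this is a genuine gap.

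On your interval $[\tilde t+1,\ \tilde t+1+\min(\gamma,\varepsilon)/2]$, the delayed argument $\tau(t)\in[t-2,t]$ is free to land anywhere in $[\tilde t-1,\ \tilde t+1+\min(\gamma,\varepsilon)/2]$; in particular it can land in a neighborhood of $\tilde t+1$, where the first-step estimate \eqref{12.41} tells you only that $x$ is within $O(\delta)$ of $M_\varepsilon$. There is then no $\gamma$-dependent improvement on $|x'(t)|$, so the asserted bound $|x'(t)|\leq(1-\gamma/4)M_\varepsilon$ does not follow. You cannot ``push'' $\tau(t)$ to where you want it; $\tau$ is fixed by the equation, and the bound must hold for every admissible $\tau(t)\in[t-2,t]$. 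The paper's choice is the reflected interval $[\tilde t+1-\gamma/2,\ \tilde t+1-\gamma/4]$, strictly to the \emph{left} of $\tilde t+1$. There the dichotomy closes: either $\tau(t)\geq\tilde t$, in which case Corollary~\ref{Corollary11.31} and $\psi_2(s)=s$ on $[0,1]$ give $x(\tau(t))\leq(\tau(t)-\tilde t)M_\varepsilon\leq(t-\tilde t)M_\varepsilon\leq(1-\gamma/4)M_\varepsilon$; or $\tau(t)<\tilde t$, in which case $\tau(t)\geq t-2\geq\tilde t-1-\gamma/2$ and the extended Lipschitz bound \eqref{12.38} (valid thanks to $\gamma<\varepsilon$) gives $x(\tau(t))\leq(1-\gamma/2)M_\varepsilon$. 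Either way $|x'(t)|\leq(1-\gamma/4)M_\varepsilon$, uniformly. Integrating from $\tilde t$ to $\tilde t+1$ then yields $x(\tilde t+1)\leq M_\varepsilon(1-\gamma^2/16)$, which against $x(\tilde t+1)\geq M_\varepsilon(1-4\delta+o(\delta))$ from \eqref{12.41} forces $\gamma\leq\sqrt{64\delta+o(\delta)}$. With your interval you also lose the clean one-sided integration: you would be trying to control a descent of unknown length rather than the fixed ascent $[\tilde t,\tilde t+1]$ from a known zero.
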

\begin{proof}
The proof is identical to that of Lemma {\ref{Lemma10.50}} up until and including a slightly modified version of inequality \eqref{11.05}:
\begin{equation}
x(t)\leq \left( 1-\frac{\gamma }{2}\right) \left( \max_{[t_{0}-\tau
-1,t_{0}]}x\right) ,t\in \left[ \tilde{t}-1-\frac{\gamma }{2},\tilde{t} \right],
\label{12.38}
\end{equation}
where $\gamma$ is also assumed to be smaller than $\varepsilon$. Applying \eqref{12.38} and Corollary {\ref{Corollary11.31}}, we obtain
\begin{equation}
\frac{x^{\prime }(t)}{\max_{[t_{0}-3-\varepsilon,t_{0}]}}\leq \max \left\{ 1-\frac{\gamma }{2},\psi _{\tau }(t-\tilde{t}) \right\}
\leq  1-\frac{\gamma }{4}, ~~t\in \left[ \tilde{t}+1-\frac{\gamma }{2},\tilde{t%
}+1-\frac{\gamma }{4} \right].
\label{12.44}
\end{equation}
Integration of \eqref{12.44}, Corollary {\ref{Corollary11.31}}, and inequality \eqref{12.41} give
\begin{equation}
1-4\delta +o(\delta )\leq \frac{x(\Lambda (\tau )-\varrho (\tau )+\tilde{t})%
}{ \max_{[t_{0}-3-\varepsilon,t_{0}]}x }\leq 1-\frac{1}{16}\gamma^{2}.
\end{equation}
We conclude that $\gamma\leq\sqrt{64\delta+o(\delta)}$. The proof is complete.
\end{proof}
\begin{theorem}
\label{Theorem11.05} Let $x$ be a $\Lambda (\tau )-$oscillating solution of %
\eqref {06.18} with $|p(t)|\equiv 1,\tau _{m }\leq \tau \in (\frac{1}{e},2)$%
, where $\Lambda (\tau )$ is given by Definition {\ref{Definition4.40}}.
Then 
\begin{equation}
|x(t)|=M\varpi _{\tau }(t+\eta )+o(1),
\end{equation}%
where $\varpi _{\tau }$ is given in \eqref {14}, Example {\ref{Example6.34},}
$M=\limsup_{t\rightarrow \infty }|x(t)|$ and $\eta \in [ 0,\Lambda
(\tau ))$.
\end{theorem}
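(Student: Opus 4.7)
The plan is to reduce to the nonnegative case via Lemma~\ref{Lemma11.30}, verify the hypothesis of Lemma~\ref{Lemma10.50} along a subsequence of zeros where the one-step bound is non-trivial, propagate the resulting asymptotic estimate to \emph{all} zeros, and finally extract a single shift $\eta\in[0,\Lambda(\tau))$.

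\emph{Step 1 (Setup).} By Lemma~\ref{Lemma11.30} we may replace $x$ by $|x|$ and assume $x\ge 0$. If $\tau_m<\tau$ then $x$ is $\Lambda(\tau)$-oscillating with $\Lambda(\tau)<\Lambda(\tau_m)$, so Lemma~\ref{Lemma10.46} gives $x(t)\to 0$ and the conclusion is trivial; therefore assume $\tau_m=\tau$ and $M:=\limsup x>0$. Enumerate the consecutive zeros of $x$ as $(t_n)$ and set
\[
M_n:=\max_{[t_n-\tau-\varrho(\tau),\,t_n]}x,\qquad x_n:=\max_{[t_n,t_{n+1}]}x,\qquad \delta_n:=1-\frac{x_n}{M_n}.
\]
Corollary~\ref{Corollary10.28} gives $M_{n+1}\le M_n$, and combining this with the definition of $\limsup$ forces $M_n\downarrow M$.

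\emph{Step 2 (Good zeros).} Because $M=\limsup x$ is attained along the sequence of peaks, there exist indices $n_k\to\infty$ with $x_{n_k}\to M$, so $\delta_{n_k}\to 0$. Applying Lemma~\ref{Lemma10.50} at each $t_{n_k}$ yields
\[
\bigl|x(t)-M_{n_k}\varpi_\tau(t-t_{n_k})\bigr|\le C\,M_{n_k}\,\delta_{n_k}^{\,c}\qquad\text{on } [t_{n_k}-\varrho(\tau),\,t_{n_k}+\Lambda(\tau)],
\]
with $c=2^{-1-\lfloor\varrho(\tau)/\tau\rfloor}$; since $M_{n_k}\to M$, the right-hand side is $o(1)$.

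\emph{Step 3 (Propagation and the single shift).} The bound above, restricted to $[t_{n_k}+\Lambda(\tau)-\tau,\,t_{n_k}+\Lambda(\tau)]$, furnishes initial data for \eqref{06.18} on the next semicycle that is uniformly within $O(\delta_{n_k}^{\,c})$ of $M\varpi_\tau(\,\cdot\,-t_{n_k})$; a Gronwall-type estimate (using $|p|\equiv 1$) together with the $\Lambda(\tau)$-periodicity of $\varpi_\tau$ produces a one-step inequality $\delta_{n_k+1}\le K\delta_{n_k}^{\,c}$ with a constant $K=K(\tau)$ independent of $k$. Since $M_n$ is attained at a local maximum $p_n$ within $\tau+\varrho(\tau)$ of $t_n$, the set of good zeros has uniformly bounded index gaps in the tail (after ruling out infinitely many vanishing semicycle lengths via Lemma~\ref{Lemma10.46} applied to appropriate sub-intervals), so finitely many iterations of the one-step propagation from the nearest $n_k\le n$ give $\delta_n=o(1)$. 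Lemma~\ref{Lemma10.50} then applies at every sufficiently large $n$, and the intervals $[t_n-\varrho(\tau),t_n+\Lambda(\tau)]$ cover a neighbourhood of $+\infty$. Because ${\rm essinf}|\varpi_\tau'|>0$, the approximate zero $t_{n+1}$ is controlled by the $L^\infty$-error, giving $t_{n+1}-t_n\to\Lambda(\tau)$, so $(t_n\bmod\Lambda(\tau))$ is Cauchy with some limit $\eta_0\in[0,\Lambda(\tau))$. Setting $\eta:=-\eta_0$ and combining $M_n\to M$ with the $\Lambda(\tau)$-periodicity of $\varpi_\tau$, we conclude $|x(t)|=M\varpi_\tau(t+\eta)+o(1)$.

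\emph{Main obstacle.} The crucial technical difficulty is Step~3. Because the exponent $c<1$ in Lemma~\ref{Lemma10.50}, a purely forward iteration of the one-step propagation degrades the bound after finitely many periods; the estimate must therefore be \emph{restarted} at each member of the subsequence $(n_k)$, which only succeeds if $n_{k+1}-n_k$ remains uniformly bounded. Establishing this uniform-density property---and in particular excluding pathologically short semicycles in the tail while $M_n\not\to 0$---is the heart of the argument.
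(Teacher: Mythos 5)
Your proposal takes a genuinely different route from the paper's proof, and it has two independent gaps, both of which are fatal as written.

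\emph{Gap 1 (Step 3 propagation).} You propose iterating the one-step bound $\delta_{n_k+1}\le K\delta_{n_k}^{\,c}$ with $c=2^{-1-\lfloor\varrho(\tau)/\tau\rfloor}<1$. For $c<1$ the map $\delta\mapsto K\delta^{c}$ is \emph{expansive} near $0$: iterating from a small $\delta$ climbs up toward the fixed point $K^{1/(1-c)}$, so a purely forward iteration loses all smallness after boundedly many steps. You recognise this and propose restarting at the good subsequence $(n_k)$, but then the entire argument hangs on $n_{k+1}-n_k$ being uniformly bounded, which you do not establish and which is not an obvious consequence of the lemmas you cite. Moreover, even granting that uniform density, the most this yields is $\delta_n\to 0$ pointwise, not summability of $\delta_n$.

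\emph{Gap 2 (extracting $\eta$).} You infer that $(t_n\bmod\Lambda(\tau))$ is Cauchy from $t_{n+1}-t_n\to\Lambda(\tau)$. That is a non sequitur: the increments of $t_n-n\Lambda(\tau)$ tending to $0$ does not make $t_n-n\Lambda(\tau)$ convergent (consider $a_n=\sum_{k\le n}1/k$, for which $a_{n+1}-a_n\to 0$ yet $a_n$ diverges). One needs $\sum_n|t_{n+1}-t_n-\Lambda(\tau)|<\infty$. Example~\ref{Example11.28} is precisely a solution with $t_{n+1}-t_n\to\Lambda(\tau)$ and $\delta_n\to 0$ for which the conclusion fails; it sits at $\tau=2$, which is why the theorem excludes $\tau=2$, and for $\tau<2$ this summability has to be \emph{proved}, not assumed.

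\emph{How the paper avoids both gaps.} The paper never propagates forward. It observes via Corollary~\ref{Corollary10.28} that the semicycle maxima $x(m_n)$ form a \emph{nonincreasing} sequence tending to $M$, so the drops $x(m_n)-x(m_{n+1})$ are summable by telescoping. Crucially, it then invokes not the $L^{\infty}$ estimate of Lemma~\ref{Lemma10.50} (the one carrying the fractional exponent $c$) but the length estimate \eqref{7.42}, which bounds $\Lambda(\tau)$ minus the semicycle length \emph{linearly} in $\delta$; this linearity is what converts summable maximum-drops into summable period-errors. The gap lengths $w_n-z_n$ are handled separately via a reverse linear bound $1-x(m_{n+1})/x(m_n)\ge\varepsilon'C(w_n-z_n)$, which rests on the structural fact $\Lambda(\tau)>\tau$ and, for $\tau\in[1,2)$, on $\xi_\tau>\tau-1$ from Remark~\ref{Remark10.36}, ensuring a sub-interval where the delayed argument lies in the descent of the previous semicycle. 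The two summability facts together make $z_n-n\Lambda(\tau)$ convergent and produce the shift $\eta$; only after that does the fractional-exponent $L^{\infty}$ bound of Lemma~\ref{Lemma10.50} enter, to pin down the shape. Your plan omits the monotone-running-maximum / telescoping engine, which is exactly what supplies the summability needed to close Gap 2 and makes the forward-propagation machinery of Step 3 unnecessary.
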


\begin{proof}
We can assume that $x$ is nonnegative (Lemma {\ref{Lemma11.30}}), $$M:=\limsup_{t\rightarrow \infty }|x(t)|>0,$$ and
select a $u_{0}$ such that $x(u_{0})=M$. Define for $n=0,1,...$,%
\begin{eqnarray*}
z_{n} &=&\inf \{t>u_{n}:x(t)=0\}, \\
u_{n+1} &=&\inf \{t>z_{n}:x(t)=M\},\\
m_{n} &=&\inf \{t\in [u_{n},z_{n}]:x(t)=\max_{[u_{n},z_{n}]}x\}, \\
w_{n} &=&\sup \{t<m_{n+1}:x(t)=0\}.
\end{eqnarray*}%
We first note that because $\tau<2$, we have $\Lambda(\tau)>\tau $. By considerations similar to Lemmata {\ref{Lemma10.46}},\text{ }{\ref{Lemma10.50}}, and by Corollary %
{\ref{Corollary10.28}}, we have that $x(m_{n})$ form a nonincreasing sequence, which tends to $M$. Hence,
the sequence $\left( x(m_{n+1})-x(m_{n})\right) $ is summable, and by Lemma {\ref{Lemma10.50}} (see inequality \eqref{7.42}), $\left( \Lambda (\tau
)-\left( z_{n+1}-w_{n}\right) \right) $ must also be summable. 
\vskip 1pt
We now need only show
that $(w_{n}-z_{n})$ is summable. Indeed, in that case, it suffices to take 
\begin{equation*}
\eta=\lim_{n\rightarrow\infty}\left( n\Lambda (\tau
)- z_{n} \right)-\Lambda (\tau)\left\lfloor \frac{\lim_{n\rightarrow\infty}\left( n\Lambda (\tau
)- z_{n} \right)}{\Lambda (\tau
)} \right\rfloor,
\end{equation*}
and use Lemma {\ref{Lemma10.50}}.
\vskip 1pt
By {Corollary \ref{Corollary10.28} we have that for any fixed }$\varepsilon>0${\ } 
\begin{equation}
w_{n}-z_{n}\leq \tau +\varepsilon  \label{8.39}
\end{equation}
holds eventually (for large $n$). By \eqref{8.39}, $\Lambda(\tau)>\tau $, and the proof of Lemma {\ref{Lemma10.46}}, we have that 
\begin{equation}
\max_{t\in
[ z_{n},w_{n}]}x(t)\leq M-k,
\label{11.22}
\end{equation} 
for a fixed $k>0$. 
Considering \eqref{8.39}, \eqref{11.22}, and Lemma {\ref{Lemma10.50}},  the sequence $(w_{n}-z_{n})$ must tend to zero. As noted in Remark {\ref{Remark10.36}}, there exists an interval where the delayed argument of the special solution is in the descent of the previous semicycle.
For a fixed $%
\varepsilon ^{\prime }>0$ sufficiently small ($\varepsilon ^{\prime }\in(0,\xi_{\tau}-(\tau -1))$ for $\tau\in[1,2)$, or $\varepsilon ^{\prime }\in(0,\xi_{\tau})$ for $\tau\in(1/e,1)$), consider the derivative on this interval, $%
\left[ w_{n}+\tau -1,w_{n}+\tau -1+\varepsilon^{\prime } \right] $ for $\tau \in
[ 1,2)$ or $\left[ w_{n},w_{n}+\varepsilon^{\prime } \right] $ for $\tau \in (%
\frac{1}{e},1)$. Using Corollary {\ref{Corollary11.31}} and the fact that $(w_{n}-z_{n})$ tends to zero, we get
\begin{align*}
\frac{x^{\prime }(t)}{x(m_{n})} & \leq \max\{\frac{\max_{
[ z_{n},w_{n}]}x}{x(m_{n})},\psi _{\tau }(t-w_{n}),x_{\tau }(\varrho (\tau )-(z_{n}-t)-\tau )\} \\ & =x_{\tau }(\varrho (\tau )-(z_{n}-t)-\tau )\leq \psi _{\tau }^{\prime
}(t-w_{n})-C\left( w_{n}-z_{n}\right),
\end{align*}%
where $C=x_{\tau }(\varrho (\tau )-\tau
)=\min_{t\in[0,\varrho(\tau)]}|x'_{\tau}(t)|$. Integrating the last inequality and using Corollary~{\ref{Corollary11.31}}, we obtain 
$\left( 1-\frac{x(m_{n+1})}{x(m_{n})}\right) \geq\varepsilon ^{\prime }C\left( w_{n}-z_{n}\right)$. We conclude that, in order for 
 $\left( x(m_{n+1})-x(m_{n})\right) $ to be summable, $(w_{n}-z_{n})$ must also be summable.
\end{proof}

\section{Discussion of the Buchanan-Lillo Conjecture}

\bigskip In parallel to the research of Myshkis, Buchanan-Lillo, and
starting with the Morse decompositions \cite{mallet} for the autonomous
case, oscillation frequency was also measured in terms of the discrete
Lyapunov functions introduced by Mallet-Paret and Sell \cite{malletsell}.
Two major conclusions may be drawn from this vein of research. Firstly, that
the rate of growth/decay of oscillatory solutions was again linked to the
oscillation frequency, a bound on the oscillation speed implying a bound on
the rate of decay \cite{cao1, cao2, garab, krisztin, kriwalther}. Secondly,
that oscillatory solutions are asymptotic to an attractor with separated
zeros \cite{garab, krisztin, mallet, malletsell}. However, these asymptotic
results, while applicable to a seemingly more general class of equations,
require a strong regularity of the parameters, such as monotonicity or
continuity. This is not admissible in the present investigations, as the
equations which describe the threshold periodic solutions have, as an
essential property, discontinuities in the delay \cite[Corollary 3.4]{lillo}%
. Hence, it is not obvious how to extend these results (that oscillatory
solutions are asymptotic to a solution with separated zeros) to equations
with measurable parameters. Finally, while these methods are intimately
linked to periodic solutions in the autonomous case, in virtue of the Poincar%
\'{e}-Bendixson Theorem \cite{kennedy, malletsellb}, such a connection in
the nonautonomous case is not a direct consequence of known results. This
contrasts with the methods of Myshkis, Buchanan-Lillo, Stavroulakis and
Braverman, which, at their core, reason on basis of periodic solutions
situated on the threshold between boundedness and unboundedness. While one
may hope that the two approaches can be united in a broader Theory, the
source and reason behind the periodic solutions crucial to the results of
Myshkis, Buchanan-Lillo, remain an enigma.

{Furthermore, all the known results on the positive feedback case }\cite%
{buch, buch1971, buch1974, lillo}, following Buchanan's Thesis \cite{buch},
consider sequences of extrema of sufficiently regular solutions. This
renders the comparison of arbitrary oscillatory solutions to $\varpi ^{+}$
and the proof of the Buchanan-Lillo Conjecture quite difficult. It should be
noted that the techniques of Lillo \cite{lillo} are in the same vein, and
moreover explicitly require the unboundedness of solutions, hence are not
directly applicable.

One hopes that the proof of the conjecture may be undertaken in two steps:
firstly reducing the problem to oscillatory solutions with separated zeros
(cf. \cite{garab, krisztin, mallet, malletsell}), and secondly generalizing
Buchanan's method \cite[p. 48]{buch}, \cite[Theorem 3]{buch1971}, \cite%
{buch1974}. However both steps are elusive given current knowledge. The
accidental character of the specific rate of convergence and its dependence
on apparently haphazard properties of the periodic solution is also
noteworthy. If a more general framework can account for both positive and
negative feedback (Buchanan-Lillo Conjecture and Proposition {\ref%
{Proposition11.24}), then either a weaker rate of convergence would be
obtained, or the theory would have to be significantly more intricate than
the methods employed until now.}

The Buchanan-Lillo Conjecture represents the last remaining open question
about the functions $\varpi ^{+}$ and $\varpi ^{-}$ of Myshkis and Soboleva \cite[%
p. 171]{myshkis72}, and their relation to the critical state of bounded
oscillatory solutions. A resolution of this conjecture would provide a more
complete overview and understanding of the research framework that began
with the $3/2$-criterion \cite{myshkispaper51} in 1951. The authors trust
that its eventual (dis)proof will give deep insights into these equations.

\appendix
\appendixpage
\addappheadtotoc

\section{Asymptotics of $\Lambda (\protect\tau )$ near $\frac{1}{e}^{+}$%
\label{appendixA}}

We calculate the asymptotics of $\Lambda (\tau )$ (given in Definition {\ref{Definition4.40}}) near $\frac{1}{e}^{+}$,
utilizing the following Lemmata, the proof of which can be found in \cite{2}.

\begin{definition}
\label{Definition6.01}
For a fixed $\tau \in (\frac{1}{e},+\infty )$ we define $\mu (\tau ),\nu
(\tau ),\gamma (\tau )$ by%
\begin{eqnarray*}
\mu (\tau )+\tau \exp (-\mu (\tau ))\cos \nu (\tau ) &=&0 \\
\nu (\tau )-\tau \exp (-\mu (\tau ))\sin \nu (\tau ) &=&0 \\
\gamma (\tau )=\arctan \frac{1+\mu (\tau )}{\nu (\tau )} &\in & \left( 0,\frac{\pi 
}{2} \right).
\end{eqnarray*}
\end{definition}

\begin{lemma}[{\cite[Proposition 2, Theorems 5,6, proof of Theorem 6]{2}}]
\label{Lemma4.23} The functions $\mu :(\frac{1}{e},+\infty )\rightarrow
(-1,+\infty )$, $\nu : (\frac{1}{e},+\infty )\rightarrow (0,\pi )$ described above in Definition~{\ref{Definition6.01}}, are
continuous strictly decreasing and satisfy the following asymptotic
relations 
\begin{eqnarray*}
\lim_{\tau \rightarrow \frac{1}{e}^{+}}\frac{1+\mu (\tau )}{\nu (\tau )} &=&0
\\
\lim_{\tau \rightarrow \frac{1}{e}^{+}}\nu (\tau ) &=&0 \\
\lim_{s\rightarrow \frac{1}{e}^{+}}\mu (\tau ) &=&-1 \\
\lim_{\tau \rightarrow \frac{1}{e}^{+}}\frac{1+\mu (\tau )}{\nu ^{2}(\tau )}
&=&\frac{1}{3}.
\end{eqnarray*}%
Furthermore, 
\begin{eqnarray*}
\lim_{\tau \rightarrow \frac{1}{e}^{+}}\gamma (\tau ) &=&0, \\
\lim_{\tau \rightarrow \frac{1}{e}^{+}}\frac{\gamma (\tau )}{\nu (\tau )} &=&%
\frac{1}{3}
\end{eqnarray*}%
and 
\begin{equation*}
\lim_{\tau \rightarrow \frac{1}{e}^{+}}\left[ \frac{\varrho (\tau )+\tau }{%
\tau }-\left( \frac{\pi }{\nu (\tau )}-\frac{1}{3}\right) \right] =0,
\end{equation*}
where  $\varrho $
is the first root of the solution  $x_{\tau }$ of \eqref {03.55} described in Lemma~{\ref{Lemma3.35}}.
\end{lemma}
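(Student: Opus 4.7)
The plan is to identify Definition~\ref{Definition6.01} with the characteristic equation $\lambda+e^{-\tau\lambda}=0$ of the autonomous equation $x'(t)+x(t-\tau)=0$, via the substitution $\lambda=(\mu+i\nu)/\tau$: its real and imaginary parts are exactly the two scalar equations defining $(\mu,\nu)$. At $\tau=1/e$ the characteristic equation has $\lambda=-e$ as a double real root, so $(\mu,\nu)=(-1,0)$ is a degenerate boundary point; for $\tau$ slightly larger this root splits into a conjugate pair, and $(\mu(\tau),\nu(\tau))$ tracks the one in the upper half-plane. This viewpoint makes Lemma~\ref{Lemma3.35} and the present claim about $\varrho$ two aspects of the same underlying perturbation phenomenon.

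To establish existence, continuity, and strict monotonicity of $(\mu,\nu)$ on $(1/e,+\infty)$, I would apply the implicit function theorem to the pair $(F_1,F_2)=(\mu+\tau e^{-\mu}\cos\nu,\,\nu-\tau e^{-\mu}\sin\nu)$. Using the defining equations themselves to simplify, the Jacobian in $(\mu,\nu)$ reduces to
\begin{equation*}
\begin{pmatrix} 1+\mu & -\nu \\ \nu & 1+\mu \end{pmatrix},
\end{equation*}
with determinant $(1+\mu)^2+\nu^2$ nonzero off the degenerate point. One then extracts explicit formulas for $\mu'(\tau),\nu'(\tau)$ and reads off the asserted monotonicity on the branch of interest.

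For the limits at $(1/e)^+$, I would write $\tau=1/e+h$, $\lambda=-e+w$ and expand $\lambda+e^{-\tau\lambda}=0$ as a power series in $(h,w)$. Since the leading balance is $w^2\sim -2e^3h$, i.e.\ $w=O(\sqrt{h})$, one must carry the expansion one order past the leading one, producing
\begin{equation*}
w^2+2e^3h-4e^2hw-\tfrac{1}{3e}w^3+O(h^2)=0.
\end{equation*}
Iterative solution gives $w=\pm i\sqrt{2e^3h}+\tfrac{5}{3}e^2h+O(h^{3/2})$, whence $1+\mu(\tau)=\tfrac{2}{3}eh+O(h^{3/2})$ and $\nu(\tau)=\sqrt{2eh}+O(h^{3/2})$. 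All four stated limits for $\mu,\nu$ (including $(1+\mu)/\nu^2\to 1/3$) follow by division, and the two limits for $\gamma$ drop out of $\arctan u\sim u$ near $0$.

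The most delicate step will be the final identity, relating $(\varrho(\tau)+\tau)/\tau$ to $\pi/\nu(\tau)-1/3$. My plan is to represent the solution $x_\tau$ of \eqref{03.55} as a contour integral (equivalently a Dirichlet-type series) over the zeros of $\lambda+e^{-\tau\lambda}=0$, and show that near $\tau=(1/e)^+$ the principal conjugate pair dominates, yielding
\begin{equation*}
x_\tau(t)=A(\tau)e^{\mu t/\tau}\cos\!\bigl(\nu t/\tau-\varphi(\tau)\bigr)+R(\tau,t),
\end{equation*}
with the phase $\varphi(\tau)$ fixed by matching to the constant segment $x_\tau\equiv 1$ on $(-\infty,0]$. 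A residue computation should give $\varphi(\tau)=\pi/2-\gamma(\tau)+o(1)$, so that the first positive zero satisfies $\nu(\tau)(\varrho(\tau)+\tau)/\tau=\pi-\gamma(\tau)+o(1)$; dividing by $\nu(\tau)$ and invoking $\gamma(\tau)/\nu(\tau)\to 1/3$ yields the claim. The main obstacle is a uniform-in-$\tau$ quantitative estimate on the remainder $R$: one must show that the non-principal characteristic roots, though exponentially damped, perturb the first zero by only $o((\tau-1/e)^{-1/2})$. This requires spectral separation between the principal pair and the next characteristic root, uniform for $\tau$ near $1/e$, which is the technical heart of the argument.
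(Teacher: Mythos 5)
The paper does not give a proof of this lemma: it is imported verbatim from reference \cite{2} (Pituk--Stavroulakis), as the bracketed citation attached to the lemma indicates. So there is no in-paper proof to compare against, and your proposal must be judged on its own internal soundness.

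Your framework is the right one, and it matches what the cited source evidently does (witness Lemma~\ref{Lemma4.26}, also cited from the proof of Theorem~6 in \cite{2}, which is exactly the quantitative version of your ``principal pair dominates'' claim). The identification $\lambda=(\mu+i\nu)/\tau$ with the characteristic equation $\lambda+e^{-\tau\lambda}=0$, the double real root $\lambda=-e$ at $\tau=1/e$, the reduced Jacobian $\bigl(\begin{smallmatrix}1+\mu & -\nu\\ \nu & 1+\mu\end{smallmatrix}\bigr)$, and the two-term expansion $w^2+2e^3h-4e^2hw-\tfrac{1}{3e}w^3+O(h^2)=0$ with $w=\pm i\sqrt{2e^3h}+\tfrac{5}{3}e^2h+O(h^{3/2})$ are all correct; they cleanly deliver $1+\mu\sim\tfrac{2}{3}eh$ and $\nu\sim\sqrt{2eh}$, hence all four $\mu,\nu$ limits and both $\gamma$ limits.

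Two substantive issues remain. First, the monotonicity step: you say you would ``read off the asserted monotonicity,'' but your own asymptotics $1+\mu(\tau)\sim\tfrac{2}{3}e(\tau-1/e)$, $\nu(\tau)\sim\sqrt{2e(\tau-1/e)}$ show $\mu$ and $\nu$ are strictly \emph{increasing} near $1/e$; indeed ``strictly decreasing'' is incompatible with $\nu\to 0^+$ and $\nu\in(0,\pi)$, so the lemma as printed contains a sign typo that your derivation should have flagged rather than deferred to. Second, and more seriously, the final identity relating $\varrho(\tau)$ to $\pi/\nu(\tau)-1/3$ is left as a plan. Writing $x_\tau$ as principal mode plus remainder $R(\tau,t)$ is the correct structure, but the entire content of the claim is in showing that the perturbation of the first zero caused by the subleading spectrum is $o(1)$ \emph{after dividing by $\nu(\tau)$}, i.e.\ a genuinely uniform-in-$\tau$ estimate on $R$ near the degenerate value $1/e$ where the spectral gap collapses. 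You explicitly acknowledge this is ``the technical heart of the argument'' and do not attempt it; absent that estimate, the last displayed limit in the lemma is unproved. As it stands the proposal is a sound road map for everything except the one limit that requires real work.
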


\begin{lemma}[{\cite[proof of Theorem 6]{2}}]
\label{Lemma4.26}For $\tau $ near $\frac{1}{e}^{+},t\in [ 0,\tau ]$%
\begin{eqnarray*}
& \left| x_{\tau }(\varrho (\tau )+t-\tau )-\exp \left\{ \mu (\tau ) \left( \frac{\varrho
(\tau )+\tau }{\tau }-\left( \frac{\tau -t}{\tau }\right)  \right) \right\} \times  \right.
\\
& \times  \left. \frac{2}{%
\sqrt{\left( \frac{1+\mu (\tau )}{\nu (\tau )}\right) ^{2}+1}}\frac{\sin
(\nu (\tau )(\frac{\varrho (\tau )+\tau }{\tau }-\left( \frac{\tau -t}{\tau }%
\right) )+\gamma (\tau ))}{\nu(\tau )} \right| \\
\leq &\frac{\pi ^{2}}{3(\ln 4-1)}\exp \left\{ -(\frac{\varrho (\tau )+\tau }{\tau }%
- \frac{\tau -t}{\tau } )(\ln 4-1) \right\} \exp  \left\{ \mu (\tau )(\frac{%
\varrho (\tau )+\tau }{\tau }-\left( \frac{\tau -t}{\tau }\right) )\right\},
\end{eqnarray*}
where $x_{\tau }$ is the solution of \eqref {03.55}, $\varrho $
is its first root, described in Lemma {\ref{Lemma3.35}}, and the functions $\mu,\nu,\gamma$ are given in Definition {\ref{Definition6.01}} and Lemma {\ref{Lemma4.23}}.
\end{lemma}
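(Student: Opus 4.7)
The plan is to prove Lemma~\ref{Lemma4.26} via a spectral representation of $x_\tau$. Applying the Laplace transform to $x'_\tau(t)+x_\tau(t-\tau)=0$ with constant history $x_\tau(t)\equiv 1$ for $t\le 0$, I obtain
$$X(s)=\frac{s-1+e^{-s\tau}}{s(s+e^{-s\tau})},$$
whose apparent pole at $s=0$ is removable (both numerator and denominator vanish there), so the singularities of $X$ are exactly the roots $\lambda$ of the characteristic function $h(s):=s+e^{-s\tau}$. Each such root is simple because $h'(\lambda)=1-\tau e^{-\lambda\tau}=1+\tau\lambda\ne 0$ for $\tau>1/e$. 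Laplace inversion along vertical-strip contours (closed to the left, using standard bounds on $|h(s)|^{-1}$) yields
$$x_\tau(t)=\sum_{k}\frac{-1}{\lambda_k(1+\tau\lambda_k)}\,e^{\lambda_k t},\qquad t\ge 0.$$

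I then isolate the principal complex-conjugate pair $\lambda_0,\bar\lambda_0=(\mu(\tau)\pm i\nu(\tau))/\tau$ from Definition~\ref{Definition6.01}. The defining identities $\tau e^{-\mu}\cos\nu=-\mu$ and $\tau e^{-\mu}\sin\nu=\nu$ give $|\mu+i\nu|=\tau e^{-\mu}$ and $\mu+i\nu=\tau e^{-\mu}e^{i(\pi-\nu)}$, while $1+\mu+i\nu=\sqrt{(1+\mu)^{2}+\nu^{2}}\,e^{i(\pi/2-\gamma(\tau))}$. Setting $w=\varrho(\tau)+t-\tau$ and $U=(\varrho(\tau)+t)/\tau=1+w/\tau$, a direct residue computation gives
$$\frac{-1}{\lambda_0(1+\tau\lambda_0)}\,e^{\lambda_0 w}+\frac{-1}{\bar\lambda_0(1+\tau\bar\lambda_0)}\,e^{\bar\lambda_0 w}=e^{\mu(\tau)U}\,\frac{2}{\sqrt{((1+\mu)/\nu)^{2}+1}}\,\frac{\sin(\nu(\tau)U+\gamma(\tau))}{\nu(\tau)},$$
matching the leading approximation in the statement exactly.

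The remaining step is to bound the tail $\sum_{k\ne 0,-1}\frac{-1}{\lambda_k(1+\tau\lambda_k)}\,e^{\lambda_k w}$. Writing $\lambda_k=(\mu_k+i\nu_k)/\tau$, a Rouch\'e-type argument locates exactly one root in each horizontal strip $\mathrm{Im}(s)\in((2k-1)\pi/\tau,(2k+1)\pi/\tau)$, with $|\nu_k|\ge 3\pi$ for $k\ne 0,-1$; the identity $|\mu_k+i\nu_k|=\tau e^{-\mu_k}$ then forces $\mu_k$ sufficiently more negative than $\mu(\tau)$ to produce, for $\tau$ near $1/e^{+}$, the exponential gap $\mathrm{Re}(\lambda_k)-\mu(\tau)/\tau\le -(\ln 4-1)/\tau$. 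Combined with the coefficient decay $|\lambda_k(1+\tau\lambda_k)|^{-1}=O(1/k^{2})$ and summation via $\sum 1/k^{2}=\pi^{2}/6$, this yields the stated constant $\pi^{2}/[3(\ln 4-1)]$ and the decay factor $\exp\{-U(\ln 4-1)\}\exp\{\mu(\tau)U\}$. The main obstacle is the sharpness of this root-localization: verifying uniformly as $\tau\to 1/e^{+}$ the precise $\ln 4-1$ gap requires a careful asymptotic analysis (in the spirit of Lemma~\ref{Lemma4.23}, with $\mu_k\sim -\ln(|\nu_k|/\tau)$ for $k=\pm 2$) and a delicate bookkeeping of the contour-integral errors, which is where the specific constants in the bound are pinned down.
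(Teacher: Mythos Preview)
The paper does not prove this lemma; it is quoted from \cite[proof of Theorem~6]{2} and invoked without argument in the asymptotic computation of $\Lambda(\tau)$ that follows. There is thus no in-paper proof to compare against. That said, your Laplace-transform/residue expansion is the standard route for such estimates and, given the fingerprint constants $\ln 4-1$ and $\pi^{2}/3$ in the bound, is almost certainly what \cite{2} does.

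Your derivation of $X(s)$, the residue formula $-1/[\lambda_k(1+\tau\lambda_k)]$, and the identification of the principal-pair contribution with the leading term in the lemma are all correct and cleanly done.

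The gap, which you yourself flag, is the tail bound, and your sketch of it contains two imprecisions worth fixing. First, the root count: for $\tau$ near $1/e^{+}$ the strip $\mathrm{Im}(s)\in(-\pi/\tau,\pi/\tau)$ contains \emph{both} members of the principal pair (they coalesce at $s=-e$ as $\tau\downarrow 1/e$), so the Rouch\'e count there is two, not one; the next pair of roots (Lambert branches $W_{1},W_{-2}$ of $s\tau=W(-\tau)$) lie in the strips $k=\pm 1$ and have $|\nu_k|\in(\pi,3\pi)$, not $|\nu_k|\ge 3\pi$ as you state. Second, the constant $\ln 4-1$ is precisely the limiting gap $\mu(\tau)-\mu_{1}(\tau)$ as $\tau\to 1/e^{+}$, and obtaining it requires computing that second pair of roots explicitly (or at least asymptotically), not merely invoking $|\mu_k+i\nu_k|=\tau e^{-\mu_k}$. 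Likewise the prefactor $\pi^{2}/[3(\ln 4-1)]$ does not fall out of $\sum 1/k^{2}=\pi^{2}/6$ alone; one needs a uniform bound on $|\lambda_k(1+\tau\lambda_k)|^{-1}$ together with the additional exponential decay $e^{(\mu_k-\mu_1)w/\tau}$ for the higher roots, summed carefully. Your outline is sound, but pinning down these two constants is exactly where the substantive work of \cite{2} lies.
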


\begin{theorem}
For $\tau $ near $\frac{1}{e}^{+},$ 
\begin{equation*}
\Lambda (\tau )=\frac{\pi }{\sqrt{2e^{3}}} \left( 1+\frac{1}{e} \right)\left( \tau -\frac{1%
}{e}\right) ^{-1/2}+o \left(\left( \tau -\frac{1}{e}\right) ^{-1/2} \right).
\end{equation*}
\end{theorem}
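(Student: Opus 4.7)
The plan is to unpack Definition~\ref{Definition4.40} via Lemma~\ref{Lemma4.31} and combine it with the pointwise asymptotics of $x_{\tau}$ near its first root supplied by Lemma~\ref{Lemma4.26}, simplified using the limits in Lemma~\ref{Lemma4.23}. Since by Lemma~\ref{Lemma4.31} we have $\psi_{\tau}' = \psi_{\tau}$ on $[\xi_{\tau},\infty)$, and since $\psi_{\tau}(\tau) < 1$ for $\tau$ near $\tfrac{1}{e}^{+}$ (with the threshold $\psi_{\tau} = 1$ actually lying well past $\tau$, deep in the exponential regime), Definition~\ref{Definition4.40} reads
\begin{equation*}
\Lambda(\tau) - \varrho(\tau) = \xi_{\tau} + \ln\frac{1}{\psi_{\tau}(\xi_{\tau})}.
\end{equation*}

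Next, writing $A(\tau) := (\varrho(\tau) + \tau)/\tau$, I would use Lemma~\ref{Lemma4.23} ($\mu \to -1$, $\nu \to 0$, $\gamma/\nu \to 1/3$, and $\nu A \to \pi^{-}$) to simplify Lemma~\ref{Lemma4.26}: the sine argument collapses to $\pi - \nu(\tau - t)/\tau + o(\nu)$, its quotient by $\nu$ to $(\tau - t)/\tau + o(1)$, the prefactor to $2 + o(1)$, and the exponential to $e^{-A}e^{(\tau-t)/\tau}(1+o(1))$, producing the workable approximation
\begin{equation*}
x_{\tau}(\varrho(\tau) + t - \tau) = (2 + o(1))\,e^{-A}\,e^{(\tau-t)/\tau}\,\frac{\tau - t}{\tau},\qquad t \in [0,\tau].
\end{equation*}
Plugging this into \eqref{4.32} and evaluating $\int u e^{u}\,du = (u-1) e^{u}$ yields, with $v := (\tau - \xi_{\tau})/\tau$, the limit equation $\tau(1-v) = v$, whence $v \to 1/(e+1)$ and $\xi_{\tau}$ is bounded. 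Substituting back, $\psi_{\tau}(\xi_{\tau}) \sim 2ve^{v}e^{-A}$, so $\ln(1/\psi_{\tau}(\xi_{\tau})) = A + O(1)$, and therefore
\begin{equation*}
\Lambda(\tau) - \varrho(\tau) = A(\tau) + O(1) = \varrho(\tau)/\tau + O(1).
\end{equation*}
Combining with $1/\tau \to e$ and the asymptotic of $\varrho(\tau)$ from Lemma~\ref{Lemma3.35} then delivers the claimed expansion of $\Lambda(\tau)$.

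The main obstacle is to rigorously propagate errors. Lemma~\ref{Lemma4.26} bounds the relative error by the exponentially small factor $(e/4)^{A}$, but this bound must be tracked through the integration defining $\psi_{\tau}$ and, crucially, through the implicit determination of $\xi_{\tau}$. An implicit function theorem argument around the monotone map $\Theta$ of Lemma~\ref{Lemma4.31} should yield $\xi_{\tau} = \tau/(1+\tau) + o(1)$ with an error small enough to guarantee the $o((\tau - \tfrac{1}{e})^{-1/2})$ remainder in the final asymptotic; the rest is routine computation.
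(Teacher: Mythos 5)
Your route is essentially the paper's: unpack Definition~\ref{Definition4.40} via Lemma~\ref{Lemma4.31} into $\Lambda(\tau)-\varrho(\tau)=\xi_\tau-\ln\psi_\tau(\xi_\tau)$, simplify Lemma~\ref{Lemma4.26} with the limits of Lemma~\ref{Lemma4.23} to obtain $x_\tau(\varrho(\tau)+t-\tau)\approx 2\,e^{-A}e^{(\tau-t)/\tau}\,\frac{\tau-t}{\tau}$ with $A=(\varrho(\tau)+\tau)/\tau$, feed this into~\eqref{4.32} to see that $\xi_\tau$ stays bounded, conclude $\Lambda(\tau)-\varrho(\tau)=A(\tau)+O(1)=\varrho(\tau)/\tau+O(1)$, and invoke Lemma~\ref{Lemma3.35}.

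Two points are worth flagging. First, your limit $\xi_\tau\to 1/(e+1)$, coming from $\tau(1-v)=v$ with $v=(\tau-\xi_\tau)/\tau$, is the correct one; the paper records $\xi_\tau=\frac{1+o(1)}{2e}$, which traces back to a spurious extra factor $e$ when $\frac{1}{\tau}\to e$ is pulled out of $\int_0^{\xi_\tau}e^{(\tau-t)/\tau}\frac{\tau-t}{\tau}\,dt$. The discrepancy is harmless since only boundedness of $\xi_\tau$ enters. Second, and more consequentially: actually carrying out your last step gives $\Lambda(\tau)=\varrho(\tau)\big(1+\tfrac{1}{\tau}\big)+O(1)$ with $\tfrac{1}{\tau}\to e$, hence $\Lambda(\tau)=\frac{\pi}{\sqrt{2e^3}}(1+e)\big(\tau-\tfrac{1}{e}\big)^{-1/2}+o\big(\big(\tau-\tfrac{1}{e}\big)^{-1/2}\big)$, with coefficient $1+e$ rather than the theorem's $1+\tfrac{1}{e}$. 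The paper's own penultimate line, $\frac{\varrho(\tau)+\tau}{\tau}+\varrho(\tau)+O(1)$, also evaluates to $(1+e)\varrho(\tau)+O(1)$, so the $1+\tfrac{1}{e}$ in both the statement and the final display of the paper's proof appears to be a typographical slip for $1+e$ (a quick numerical check near $\tau=0.4$ supports $1+e$). Rather than asserting that the final arithmetic ``delivers the claimed expansion,'' you should perform it and notice the mismatch.
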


\begin{proof}
\bigskip From Lemmata {\ref{Lemma4.23}},\text{ }{\ref{Lemma4.26}}, we obtain using  $\lim\limits_{x\rightarrow0}\frac{\sin x}{x}=1$ successively  
\begin{eqnarray*}
&&\left| x_{\tau }(\varrho (\tau )+t-\tau )-\exp \left\{ \left[ -1+ \left(\frac{1}{3}%
+o(1) \right)\nu ^{2}(\tau )\right] \left( \frac{\pi }{\nu (\tau )}-\frac{1}{3}%
+o(1)-\left( \frac{\tau -t}{\tau }\right) \right) \right\} \times  \right. \\
&& \left. \times 2(1+o(1))\frac{\sin (v(\tau )(\frac{\pi }{\nu (\tau )}-\frac{1}{3}%
+o(1)-\left( \frac{\tau -t}{\tau }\right) )+\gamma (\tau ))}{v(\tau )} \right| \\
&\leq &\frac{\pi ^{2}}{3(\ln 4-1)}\exp \left\{ - \left( \frac{\pi }{\nu (\tau )}-%
\frac{1}{3}+o(1)-\left( \frac{\tau -t}{\tau }\right) )(\ln 4-1)\right) \right\} 
\times  \\
&&\exp \left\{ \left[ -1+(\frac{1}{3}+o(1))\nu ^{2}(\tau )\right] \left( 
\frac{\pi }{\nu (\tau )}-\frac{1}{3}+o(1)-\left( \frac{\tau -t}{\tau }%
\right) \right) \right\}, 
\end{eqnarray*}%
\begin{eqnarray*}
&& \left|x_{\tau }(\varrho (\tau )+t-\tau )-\exp \left\{ -\frac{\pi }{\nu (\tau )}+\frac{1%
}{3}+\left( \frac{\tau -t}{\tau }\right)  \right\} 2(1+o(1))\frac{\sin (\pi -\left( 
\frac{\tau -t}{\tau }+o(1)\right) \nu (\tau ))}{\nu (\tau )}\right| \\
&\leq &O \left(\exp \left\{ \frac{-\pi }{\nu (\tau )}\ln 4 \right\} \right),
\end{eqnarray*}%
\begin{eqnarray}
x_{\tau }(\varrho (\tau )+t-\tau )=& \exp  \left\{ \frac{-\pi }{\nu (\tau )} \right\} \exp \left\{
\frac{1}{3}+ \frac{\tau -t}{\tau } \right\} (2+o(1))\left( \frac{\tau
-t}{\tau }+o(1)\right) \nonumber \\ & + O\left( \exp \left\{ \frac{-\pi }{\nu (\tau )}\ln 4 \right\} \right)  \label{41.31}
\end{eqnarray}%
Now we calculate $\xi _{\tau }$, the solution of \eqref {4.32}, described in Lemma {\ref{Lemma4.31}}. We
first estimate the left-hand side of \eqref {4.32}. The calculation is straightforward integration of \eqref {41.31}, and we also use the fact that $\nu$ tends to zero, hence $\exp\{ -(\ln4-1)\frac{\pi}{\nu(\tau)}\}=o(1)$. 
Thus,
\begin{eqnarray*}
&&\int_{0}^{\xi _{\tau }}x_{\tau }(\varrho (\tau )+t-\tau )dt \\
&=&\int_{0}^{\xi _{\tau }}\exp  \left\{ -\frac{\pi }{\nu (\tau )} \right\} \exp \left\{ \frac{1}{3%
}+\left( \frac{\tau -t}{\tau }\right)  \right\} 2(1+o(1))\left( \frac{\tau -t}{\tau }%
+o(1)\right)dt \\ & &  +\int_{0}^{\xi _{\tau }}O\left( \exp  \left\{ -\frac{\pi }{\nu (\tau )}\ln
4 \right\} \right) \, dt \\
&=&2\exp \left\{ -\frac{\pi }{\nu (\tau )} \right\} \exp \left\{ \frac{1}{3}\right\} \int_{0}^{\xi _{\tau
}}\exp \left\{ \frac{\tau -t}{\tau }\right\} \left( \frac{\tau -t}{\tau }+o(1)\right)dt
+O\left( \exp\left\{-\frac{\pi }{\nu (\tau )}\ln 4 \right\} \right)  \\
&=&2e\exp \left\{ -\frac{\pi }{\nu (\tau )}\right\} \exp \left\{ \frac{1}{3} \right\} \int_{0}^{\xi _{\tau
}}\exp \left\{  \frac{\tau -t}{\tau }\right\}  \left( \tau -t\right)dt +o \left(\exp \left\{ -%
\frac{\pi }{\nu (\tau )}\right\} \right) \\
&=&2e\exp\left\{ -\frac{\pi }{\nu (\tau )}\right\} \exp \left\{ \frac{1}{3} \right\} \exp \left\{ 1-e\xi _{\tau
} \right\} \xi _{\tau }+o(1)\exp  \left\{ -\frac{\pi }{\nu (\tau )}) \right\}.
\end{eqnarray*}%
By expression \eqref {41.31}, we immediately see that the right-hand side of \eqref {4.32} has the
asymptotic expansion 
\begin{eqnarray*}
&&\exp \left\{ -\frac{\pi }{\nu (\tau )} \right\} \exp \left\{ \frac{1}{3}+\left( \frac{\tau -\xi
_{\tau }}{\tau }\right)  \right\} 2(1+o(1))\left( \frac{\tau -\xi _{\tau }}{\tau }%
+o(1)\right).
\end{eqnarray*}%
Therefore $\xi _{\tau }$ =$\frac{1+o(1)}{2e}.$ Now, using Lemma {\ref{Lemma3.35}},
Lemma {\ref{Lemma4.31}, Definition \ref{Definition4.40}}, Lemma {\ref{Lemma4.23}}, we can
calculate the asymptotics of $\Lambda $:
\begin{eqnarray*}
\Lambda (\tau ) &=&\varrho (\tau )+1/(2e)+o(1)-\ln \left( 2e\exp \left\{ -\frac{\pi }{\nu (\tau )%
} \right\} \exp\left\{ \frac{1}{3} \right\} \exp \left\{ 1-e\xi _{\tau } \right\} \xi _{\tau }(1+o(1)) \right) \\
&=&\varrho (\tau )+\frac{\pi }{\nu (\tau )}+O(1) \\
&=&\frac{\varrho (\tau )+\tau }{\tau }+\varrho (\tau )+O(1) \\
&=&\frac{\pi }{\sqrt{2e^{3}}}(1+\frac{1}{e})\left( \tau -\frac{1}{e}\right)
^{-1/2}+o \left( \left( \tau -\frac{1}{e}\right) ^{-1/2} \right).
\end{eqnarray*}
\end{proof}

\section{Change of variables\label{appendixB}}

We prove a generalization of a well-known change of variables \cite{ladas} to the case where the coefficient (prior to the change of variables) may be zero on sets of positive measure. This allows us to assume $|p(t)|\equiv1$. 

\begin{lemma}
\label{Lemma10.03}Define%
\begin{equation}
f(t):=\left\{  \begin{array}{ll} \displaystyle \int_{0}^{t}|p(s)|ds, & t\geq 0\\ \\ t, & t\leq 0 \end{array} \right.
\label{6.49}
\end{equation}%
Assume that $f(+\infty )=+\infty $ and that $x$ is a solution of %
\eqref{06.18}, with initial point $t=0$. Define the strictly increasing
function 
\begin{equation*}
g(t):=\inf \{s\geq 0\colon f(s)=t\},\,\,\,\,t\in [ 0,\infty ),
\end{equation*}%
where $f$ is given in \eqref{6.49}, and a function $\tilde{x}$ by 
\begin{equation*}
\tilde{x}(s)=x(t)\text{ if and only if }s=f(t).
\end{equation*}%
Then, $|p\left( g(s)\right) |>0,$ a.e., and $\tilde{x}$ is a solution of 
\begin{equation}
\tilde{x}^{\prime }(s)= {\rm sgn}[p\left( g(s)\right) ]\tilde{x}(f(\tau
(g(s)))),s\geq 0  \label{2.05}
\end{equation}%
where $sgn(\cdot)$ is the sign function. Equivalently, 
\begin{equation}
\tilde{x}^{\prime }(s)= {\rm sgn}[p\left( g(s)\right) ]\,\,\left\{ 
 \begin{array}{ll} \displaystyle %
\tilde{x}  \left(  s-\int_{\tau (g(s))}^{g(s)}|p(z)|dz\right) ,  & \tau (g(s))\geq
0 \\ \tilde{x}  [ \tau (g(s))  ] , & \tau (g(s))\leq 0, \end{array} \right. \quad s\geq 0.
\label{2.05a}
\end{equation}
\end{lemma}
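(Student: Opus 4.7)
My plan is to exploit the basic fact that on any interval where $|p|$ vanishes a.e., the solution $x$ of \eqref{06.18} is constant, so $\tilde x(s):=x(g(s))$ is unambiguously defined for $s\ge 0$ (and equals $x(s)$ for $s\le0$, since $f$ is the identity there). This is the first step: observe that if $f(t_1)=f(t_2)$ with $t_1<t_2$, then $|p|\equiv0$ a.e.\ on $[t_1,t_2]$, hence $x'\equiv0$ on this interval, hence $x(t_1)=x(t_2)$, so the definition of $\tilde x$ by $\tilde x(s)=x(t)\Leftrightarrow s=f(t)$ is consistent with $\tilde x(s)=x(g(s))$.

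Next I would prove $|p(g(s))|>0$ for a.e.\ $s\ge0$ using the classical Lusin $N$-property of absolutely continuous functions: since $f$ is AC and $f'=|p|=0$ on the set $N:=\{t\ge0:|p(t)|=0\}$, the image $f(N)$ has Lebesgue measure zero (a standard consequence of the estimate $m^*(f(E))\le\int_E|f'|\,dt$ for AC $f$). Because $f\circ g$ is the identity on $[0,\infty)$, we have $\{s:g(s)\in N\}\subset f(N)$, which yields the claim. This fact is crucial because the ratio $1/|p(g(s))|$ will appear implicitly when we differentiate $\tilde x$.

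For the differential equation itself, rather than using the chain rule on $g$ (which is only strictly increasing, not AC in general), I would argue integrally. For any $0\le s_1<s_2$,
\[
\tilde x(s_2)-\tilde x(s_1)=x(g(s_2))-x(g(s_1))=\int_{g(s_1)}^{g(s_2)}p(t)\,x(\tau(t))\,dt=\int_{g(s_1)}^{g(s_2)}\operatorname{sgn}[p(t)]\,x(\tau(t))\,|p(t)|\,dt,
\]
where the second equality uses \eqref{06.18} (and absolute continuity of $x$) and the third uses that on $\{p=0\}$ the integrand already vanishes. Now apply the substitution formula for AC monotone $f$ with $u=f(t)$, $du=|p(t)|\,dt$, and $t=g(u)$ on the complement of a null set, to rewrite the right-hand side as
\[
\int_{s_1}^{s_2}\operatorname{sgn}[p(g(u))]\,x(\tau(g(u)))\,du.
\]
Consequently $\tilde x$ is absolutely continuous on $[0,\infty)$ with $\tilde x'(s)=\operatorname{sgn}[p(g(s))]\,x(\tau(g(s)))$ for a.e.\ $s$. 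Finally, since $\tilde x(f(t))=x(t)$ for $t\ge0$ and $\tilde x(t)=x(t)$ for $t\le 0$ (where $f(t)=t$), we have $x(\tau(g(s)))=\tilde x(f(\tau(g(s))))$, which gives \eqref{2.05}; expanding $f(\tau(g(s)))=\int_{\tau(g(s))}^{g(s)}(-1)|p(z)|\,dz+s$ in the case $\tau(g(s))\ge0$ (and recalling $f(\tau(g(s)))=\tau(g(s))$ when $\tau(g(s))\le0$) yields \eqref{2.05a}.

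The main technical obstacle is justifying the substitution-of-variables step cleanly: $f$ is only absolutely continuous and not strictly increasing (it has flat pieces), and its generalized inverse $g$ has jump discontinuities at the levels corresponding to these flat pieces, so the naive chain rule $\tilde x'(s)=x'(g(s))g'(s)$ is not directly legitimate. The integral reformulation above, combined with the Lusin $N$-property and the a.e.\ positivity of $|p(g(s))|$, sidesteps this by working on the level of AC functions throughout, which I expect to be the cleanest route to \eqref{2.05}--\eqref{2.05a}.
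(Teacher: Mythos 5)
Your proposal is correct and its main technical step — the integral substitution $u=f(t)$, $du=|p(t)|\,dt$ applied to $\int_{g(s_1)}^{g(s_2)}p(t)x(\tau(t))\,dt$ via the change-of-variables theorem for absolutely continuous monotone $f$ — is essentially the same as the paper's computation of the quantity it calls $J$. The genuine difference is in how you establish $|p(g(s))|>0$ a.e.: you invoke the Lusin $N$-property estimate $m^*(f(E))\le\int_E|f'|\,dt$, apply it to $N=\{t\ge0:|p(t)|=0\}$ to conclude $m(f(N))=0$, and then observe $\{s:g(s)\in N\}\subset f(N)$ since $f\circ g=\mathrm{id}$. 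The paper instead runs the change-of-variables theorem a second time, computing $\int_{f(u)}^{f(v)}\chi\{|p(g(\cdot))|>0\}$ and pulling it back to $\int_u^v\chi\{\ldots\}\,|p(w)|\,dw$ after a fairly elaborate decomposition of $(u,v)\setminus g(\mathbb{R}_+)$ into a countable set and open intervals on which $f$ is shown to be constant. Your route is shorter and cleaner; the paper's is self-contained in the sense that it relies only on the Serrin--Varberg/Tandra change-of-variables theorem it already cites, rather than importing the sharper Banach--Zaretsky-type bound. One small presentational caveat: your phrase ``$t=g(u)$ on the complement of a null set'' slightly conflates two things — it is not true that $g(f(t))=t$ for a.e.\ $t$; rather, the exceptional set $\{t:g(f(t))\ne t\}$ may have positive measure, but $|p|$ vanishes a.e.\ on it, which is exactly what makes $\phi(f(t))|p(t)|=\operatorname{sgn}[p(t)]x(\tau(t))|p(t)|$ hold a.e.\ in $t$ and hence legitimates the substitution. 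You would need to state this explicitly (it is in fact the content of your own opening observation about the flat pieces of $f$), and also note in passing that $s\mapsto g(s)$ pulls back measurable sets to measurable sets (as the paper verifies) so that the integrand on the $u$-side is measurable. Neither point is a gap in the idea — just details worth making explicit.
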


\begin{proof}
We first prove that $\tilde{x}$ is well-defined and  locally Lipschitz
on $[0,+\infty )$. In the following, $0\leq u<v$. Using the fact
that $x$ is a solution on $[0,+\infty )$, we derive 
\begin{align*}
|x(v)-x(u)|& =\left\vert \int_{u}^{v}p(s)x(\tau (s))ds\right\vert \\
& \leq \left\vert \int_{u}^{v}|p(s)|ds\right\vert {\rm esssup}_{s\in [ u,v]}|x(\tau (s))| \\
& \leq |f(u)-f(v)|{\rm esssup}_{s\in [ u,v]}|x(\tau (s))|.
\end{align*}%
For any points $t,s\in {\mathbb{R}}_{+}$, $g(t)=s$ holds if and only if $%
t=f(s)$, $g(f(s))=s$. Therefore, for any measurable set $A\subset {\mathbb{R}%
}_{+}$, we have $g^{-1}(A):=\{x\in {\mathbb{R}}_{+}:g(x)\in A\}=f(A\cap g({%
\mathbb{R}}_{+}))$ and $g({\mathbb{R}}_{+})=\{s\in {\mathbb{R}}_{+}\colon
g(f(s))=s\}.$ As $g({\mathbb{R}}_{+})$ is measurable, by absolute continuity
of $f$, we have that $g^{-1}(A)$ is measurable. Hence, when $r$ is
measurable, and $r\circ g$ is well-defined, $r\circ g$ is measurable (so for
example, the function $|p\left( g(s)\right) |$ is measurable).

In view of the properties of $g$ and $f$, the composite function $g\circ f$
is defined and monotone on ${{\mathbb{R}}}_{+}$ and can have countably many
discontinuities. Also, considering the definition of $g$, $g(f(s))\leq
s,\forall s\geq 0.$ Consider the sets 
\begin{align*}
S_{uv}& :=(u,v)\setminus g({\mathbb{R}}_{+})=\{s\in (u,v)\colon g(f(s))<s\},
\\
D_{uv}& :=\{s\in S_{uv}\colon g\circ f\ \ {\text{is discontinuous at }}s\},
\\
C_{uv}& :=\{s\in S_{uv}\colon g\circ f\ \ {\text{is continuous at }}s\}.
\end{align*}%
Then $D_{uv}$ is at most countable. Also, 
\begin{equation*}
\forall z\in C_{uv}\,,\frac{1}{2}\left( z-g(f(z))\right) >0.
\end{equation*}%
By continuity, $\exists \,\varepsilon (z)\in (0,\min [z-u,v-z])\colon $ 
\begin{equation*}
s\in (z-\varepsilon (z),z+\varepsilon (z))\Longrightarrow g(f(s))-s<\frac{1}{%
2}\left( g(f(z))-z\right) <0.
\end{equation*}%
Hence, 
\begin{equation*}
(z-\varepsilon (z),z+\varepsilon (z))\subset S_{uv},\,\,\forall z\in C_{uv}
\end{equation*}%
Now, 
\begin{equation*}
\left( \bigcup_{z\in C_{uv}}(z-\varepsilon (z),z+\varepsilon (z))\right)
\cup D_{uv}=S_{uv},
\end{equation*}%
where $\bigcup_{z\in C_{uv}}(z-\varepsilon (z),z+\varepsilon (z))$ is an
open set in ${\mathbb{R}}$ (in the standard topology) and it can be written
as a collection of mutually disjoint open intervals $\{I_{\beta }\}_{\beta
\in {\mathbb{B}}}$, where ${\mathbb{B}}\subset {\mathbb{N}}$. We can write 
\begin{equation*}
S_{uv}=\left( \bigcup_{\beta \in {\mathbb{B}}}I_{\beta }\right) \cup D_{uv}.
\end{equation*}%
Furthermore, we have 
\begin{equation*}
\int_{I_{\beta }}\mu (s)|p(s)|ds=0,\,\,\,\,\forall \beta \in {\mathbb{B}}
\end{equation*}%
because we will show that $I_{\beta }\subset S_{uv}$ implies that $f$ is
constant on $I_{\beta },$ i.e. $f(\inf  I_{\beta })=f(\sup I_{\beta })$
and 
\begin{equation*}
f(\sup I_{\beta })-f(\inf   I_{\beta })=\int_{I_{\beta }}\mu
(s)|p(s)|ds=0.
\end{equation*}%
This is easy to see. In fact, $\forall t\in S_{uv}$, $t>g(f(t))$ and $%
g(f(t))\in g({\mathbb{R}}_{+})$. Considering the definition of $S_{uv}$ we
have 
\begin{equation*}
g(f(t))\notin S_{uv}\supset I_{\beta },\forall t\in S_{uv}.
\end{equation*}%
For any fixed $\beta \in {\mathbb{B}}$, we can therefore assume that $%
g(f(z))\leq \inf  I_{\beta }$, $\forall z\in I_{\beta }$. Using the fact
that $f(g(f(z)))=f(z)$, i.e., $f$ is constant on $[g(f(z)),z]\supset [ 
\inf  I_{\beta },z]$, it is easy to see that $f$ is constant on $%
I_{\beta }=\cup _{z\in I_{\beta }}(\inf  I_{\beta },z]$. Hence, setting
for any set $A\subset {\mathbb{R}}$, 
\begin{equation*}
\chi (A,s)=%
\begin{cases}
1{\text{ if }}s\in A, \\[2mm] 
0{\text{ if }}s\in {\mathbb{R}}\setminus A, %
\end{cases}%
\end{equation*}%
we have 
\begin{equation}
\int_{u}^{v}\chi \left( S_{uv},w\right) |p(w)|dw=0  \label{1.5}
\end{equation}%
and 
\begin{equation}
\chi \left( S_{uv},w\right) |p(w)|=0\,\,\,{\text{ a.e. for }}\,\,w\in
[ u,v].  \label{1.6}
\end{equation}%
Now, we show that $|p\left( g(s)\right) |>0$ a.e.

We use the change of variables theorem for the Lebesgue integral (\cite%
{serrin}, \cite{tandra}) 
\begin{align*}
I & =\int_{f(u)}^{f(v)}\chi \left( \left\{ s\in [ f(u),f(v)]\colon
|p\left( g(s)\right) |>0\right\} ,w\right) dw \\ & =\int_{u}^{v}\chi \left(
\left\{ s\in [ u,v]\colon |p\left( g(f(s))\right) \}|>0\right\}
,w\right) p(w)dw.
\end{align*}%
Obviously, 
\begin{eqnarray*}
&&  \chi \left( \left\{ s\in [ u,v]\colon |p\left( g(f(s))\right)
|>0\right\} ,w\right) \\ &= & \chi \left( \left\{ s\in [ u,v]\colon |p\left(
s\right) |>0\right\} ,w\right) \chi \Big (\Big \{s\in [ u,v]\colon
g(f(s))=s\Big \},w\Big ) \\
& & +\chi \left( \left\{ s\in [ u,v]\colon |p\left( g(f(s))\right)
|>0\right\} ,w\right) \chi \Big (\Big \{s\in [ u,v]\colon g(f(s))\neq s%
\Big \},w\Big ). 
\end{eqnarray*}%
By formula~\eqref {1.6}, 
\begin{equation*}
\int\limits_{u}^{v}\chi \left( \left\{ s\in [ u,v]\colon |p\left(
g(f(s))\right) |>0\right\} ,w\right) \chi \Big (\Big \{s\in [
u,v]\colon g(f(s))\neq s\Big \},w\Big )|p(w)|dw=0.
\end{equation*}%
Then 
\begin{equation*}
I=\int_{u}^{v}\chi \left( \left\{ s\in [ u,v]\colon |p\left( s\right)
|>0\right\} ,w\right) \cdot \chi \Big (\Big \{s\in [ u,v]\colon
g(f(s))=s\Big \},w\Big )|p(w)|dw.
\end{equation*}%
Since 
\begin{equation*}
\int_{u}^{v}\chi \left( \left\{ s\in [ u,v]\colon |p\left( s\right)
|\leq 0\right\} ,w\right) \cdot \chi \Big (\Big \{s\in [ u,v]\colon
g(f(s))=s\Big \},w\Big )|p(w)|dw=0,
\end{equation*}%
we have 
\begin{equation*}
I=\int_{u}^{v}\chi \Big (\Big \{s\in [ u,v]\colon g(f(s))=s\Big \},w%
\Big )|p(w)|dw.
\end{equation*}%
Further simplification gives (we use formula~\eqref {1.6} again) 
\begin{eqnarray*}
I & = & \int_{u}^{v}\chi \Big (\Big \{s\in [ u,v]\colon g(f(s))=s\Big \},w%
\Big )|p(w)|dw  \\
& & +\int_{u}^{v}\chi \Big (\Big \{s\in [ u,v]\colon g(f(s))\neq s\Big \},w%
\Big )|p(w)|dw  \\
& = & \int_{u}^{v}|p(w)|dw=f(v)-f(u)=\int_{f(u)}^{f(v)}1\,dw. 
\end{eqnarray*}%
This implies that almost everywhere 
\begin{equation}
\chi \left( \{s\in [ f(u),f(v)]\colon |p\left( g(s)\right)
|>0\},w\right) =1,\,\,\,w\in [ f(u),f(v)]  \label{7.28}
\end{equation}%
and almost everywhere 
\begin{equation*}
|p\left( g(s)\right) |>0,\,\,\,\,s\in [ 0,+\infty ).
\end{equation*}%
Also, the function 
\begin{equation*}
\frac{p\left( g(s)\right) }{\displaystyle|p\left( g(s)\right) |}%
\,,\,\,\,\,s\in [ 0,+\infty )
\end{equation*}%
is measurable and bounded. Consider the integral 
\begin{align*}
J:=& \int_{f(u)}^{f(v)}\frac{p\left( g(s)\right) }{\displaystyle|p\left(
g(s)\right) |}{\tilde{x}}(f(\tau (g(s))))ds \\
=& \int_{f(u)}^{f(v)}\frac{p\left( g(s)\right) }{\displaystyle|p\left(
g(s)\right) |}x(\tau (g(s)))ds.
\end{align*}%
\vspace*{10mm} Then also 
\begin{equation*}
J=\int_{u}^{v}\frac{p\left( g(f(s))\right) }{|p\left( g(f(s))\right) |}%
x(\tau (g(f(s))))|p(s)|ds
\end{equation*}%
The following obviously holds:
\begin{align*}
J=& \int_{u}^{v}\Bigg (\frac{p(g(f(s)))x(\tau (g(f(s))))}{|p\left(
g(f(s))\right) |}|p(s)|\chi ^{\ast }(s) \\
& +\frac{p\left( g(f(s))\right) x(\tau (g(f(s))))}{|p\left( g(f(s))\right) |}%
|p(s)|\chi ^{\ast \ast }(s)\Bigg )ds,
\end{align*}%
where 
\begin{align*}
\chi ^{\ast }(s)& :=\chi \Big (\Big \{w\in [ u,v]\colon g(f(w))=w\Big
\},s\Big ), \\
\chi ^{\ast \ast }(s)& :=\chi \Big (\Big \{w\in [ u,v]\colon
g(f(w))\neq w\Big \},s\Big ).
\end{align*}%
Moreover, applying~\eqref {1.6}, we continue to compute $J$, 
\begin{eqnarray*}
J &=&\int_{u}^{v}p(s)x(\tau (s))\chi ^{\ast }(s)ds \\
&=&\int_{u}^{v}\Bigg (p(s)x(\tau (s))\chi ^{\ast }(s)+p(s)x(\tau (s))\chi
^{\ast \ast }(s)\Bigg )ds \\
&=&\int_{u}^{v}p(s)x(\tau (s))ds=x(v)-x(u)={\tilde{x}}(f(v))-{\tilde{x}}%
(f(u)).
\end{eqnarray*}%
It is easy to see that the last line is equivalent to~\eqref{2.05}, %
\eqref{2.05a}, the two expressions being equal by the relation $%
f(g(s))=s,\forall s\geq 0$.
\end{proof}

\section*{Acknowledgments}

We thank Anatoli Ivanov for valuable consultations on the properties of the
periodic solutions and the overall presentation of the results. We thank 
Ábel Garab and Tibor Krisztin for insightful discussions regarding the
assumptions under which oscillatory solutions are asymptotic to attractors
with separated zeros, and possible extensions of such results.
The first author was supported by the NSERC Grant RGPIN-2020-03934. 
The second author acknowledges the support of Ariel University.


\begin{thebibliography}{99}
\bibitem{azb} N.V. Azbelev, L.M. Berezanskii, and L.F. Rakhmatullina, 
\textit{\ A linear functional-differential equation of evolution type},
Differentsial'nye Uravneniya, 13 (1977), no. 11, pp. 1915--1925,
2106 (in Russian).

\bibitem{bb7} L. Berezansky and E. Braverman, \textit{Explicit exponential
stability conditions for linear differential equations with several delays},
J. Math. Anal. Appl., 332 (2007), pp. 246-264.

\bibitem{bb9} L. Berezansky and E. Braverman, \textit{\ On exponential
stability of a linear delay differential equation with an oscillating
coefficient}, Appl. Math. Lett., 22 (2009), pp. 1833-1837.

\bibitem{bb11} L. Berezansky and E. Braverman, \textit{New stability
conditions for linear differential equations with several delays}, Abstr.
Appl. Anal., vol. 2011 (2011), 178568.

\bibitem{bb10} L. Berezansky and E. Braverman, \textit{On exponential
stability of linear delay equations with oscillatory coefficients and kernels%
}, Differential Integral Equations, 35(9-10) (2022), pp. 559-580.

\bibitem{buch} J. Buchanan, \textit{\ On the stability of solutions of a
differential-difference equation with a variable lag}, Ph.D. Thesis, Purdue
University, Lafayette, Ind., 1964.

\bibitem{buch1971} J. Buchanan, \textit{Growth of Oscillatory Solutions of $%
y^{\prime }(x)=y(x-d(x))$}, SIAM J. Appl. Math., 20(4) (1971), pp. 670-676.

\bibitem{buch1974} J. Buchanan,\textit{\ Bounds on the Growth of a Class of
Oscillatory Solutions of $y^{\prime }(x)=my(x-d(x))$ with Bounded Delay},
SIAM J. Appl. Math., 27(4) (1974), pp. 539-543.

\bibitem{cao1} Y. Cao, \textit{The discrete Lyapunov function for scalar
differential delay equations}, J. Differential Equations, 87 (1990), pp.
365-390.

\bibitem{cao2} Y. Cao, \textit{\ The oscillation and exponential decay rate
of solutions of differential delay equations}, in Oscillation and dynamics
in delay equations (San Francisco, CA, 1991), vol. 129 of Contemp. Math.,
Amer. Math. Soc., Providence, RI, 1992, pp. 43-54.

\bibitem{garab} \'{A}. Garab, \textit{Absence of small solutions and
existence of Morse decomposition for a cyclic system of delay differential
equations}, J. Differential Equations, 269 (2020), pp. 5463-5490.

\bibitem{gyor.hart.} I. Gy\"{o}ri and F. Hartung, \textit{Stability in delay
perturbed differential and difference equations}, in Topics in Functional
Differential and Difference Equations (Lisbon, 1999), vol. 29 of Fields
Inst. Commun., Amer. Math. Soc., Providence, RI, USA, 2001, pp. 181-194.

\bibitem{ght} I. Gy\"{o}ri, F. Hartung, and J. Turi, \textit{Preservation of
stability in delay equations under delay perturbations}, J. Math. Anal.
Appl., 220 (1998), pp. 290-312.

\bibitem{gyori:96} I. Gy\"{o}ri and M. Pituk, \emph{Comparison theorems and
asymptotic equilibrium for delay differential and difference equations},
Dynam. Systems Appl. 5 (1996), pp. 277-302.

\bibitem{Hale} J.K. Hale,\textit{\ Theory of functional differential
equations}, Springer-Verlag, New York Heidelberg Berlin, 1977.

\bibitem{kennedy} B. Kennedy, \textit{The Poincar\'{e}--Bendixson theorem
for a class of delay equations with state-dependent delay and monotonic
feedback}, J. Differential Equations, 266.4 (2019), pp. 1865-1898.

\bibitem{krisztin} T. Krisztin and O. Arino, \textit{The two-dimensional
attractor of a differential equation with state-dependent delay}, J. Dynam.
Differential Equations, 13 (2001), pp. 453-522.

\bibitem{kriwalther} T. Krisztin, H.-O. Walther, and J. Wu,\textit{\ Shape,
smoothness and invariant stratification of an attracting set for delayed
monotone positive feedback}, vol. 11 of Fields Institute Monographs, Amer.
Math. Soc., Providence, RI, 1999.

\bibitem{kwong} M.K. Kwong,\textit{\ Oscillation of first-order delay
equations}, J. Math. Anal. Appl., 156(1) (1991), pp. 274-286.

\bibitem{ladas} G. Ladas, Y.G. Sficas, and I.P. Stavroulakis, \textit{%
Asymptotic Behavior of Solutions of Retarded Differential Equations}, Proc.
Amer. Math. Soc., 88 (2) (1983), pp. 247-253.

\bibitem{laks2} V. Lakshmikantham and S. Leela, \textit{\ Integral and
differential inequalities}, Vols I and II, Academic Press, New York, 1969.

\bibitem{laks} V. Lakshmikantham, S. Leela, and M. Ama Mohan Rao, \textit{%
Integral and integro-differential inequalities}, Appl. Anal., 24.3 (1987),
pp. 157-164.

\bibitem{lillo} J.C. Lillo, \textit{Oscillatory solutions of the equation} $%
y^{\prime }(x)=m(x)y(x-n(x))$, J. Differential Equations, 6 (1969), pp. 1-35.

\bibitem{mallet} J. Mallet-Paret, \textit{\ Morse decompositions for
delay-differential equations}, J. Differential Equations, 72 (1988), pp.
270-315.

\bibitem{malletsell} J. Mallet-Paret and G. R. Sell, \textit{Systems of
differential delay equations: Floquet multipliers and discrete Lyapunov
functions}, J. Differential Equations, 125 (1996), pp. 385-440.

\bibitem{malletsellb} J. Mallet-Paret and G.R. Sell,\textit{\ The Poincar%
\'{e}-Bendixson theorem for monotone cyclic feedback systems with delay}, J.
Differential Equations, 125 (2) (1996), pp. 441-489.

\bibitem{malyg} V.V. Malygina, \textit{Some criteria for stability of
equations with retarded argument}, Differentsial'nye Uravneniya 28 (1992),
no. 10, pp. 1716--1723, 1838 (in Russian); Differential Equations
28 (1992), no. 10, pp. 1398--1405.

\bibitem{myshkisbook51} A.D. Myshkis, \textit{Linear differential equations
with a retarded argument}, Moscow, 1951 (in Russian).

\bibitem{myshkispaper51} A.D. Myshkis, \textit{On solutions of linear
homogeneous differential equations of the first order of stable type with a
retarded argument}, Mat. Sb. (N.S.), Volume 28 (70), Number 3, (1951), pp.
641-658 (in Russian).

\bibitem{myshkisgerman} A.D. Myshkis, \textit{\ Linear Differential
Equations with Retarded Arguments}, Deutscher Verlag Der Wissenschaften,
Berlin, 1955 (in German).

\bibitem{myshkis72} A.D. Myshkis,\textit{\ Linear Differential Equations
with Retarded Argument}, Nauka, Moscow, 1972 (in Russian).

\bibitem{2} M. Pituk and J.I. Stavroulakis,\textit{\ The first positive root
of the fundamental solution is an optimal oscillation bound for linear delay
differential equations}, J. Math. Anal. Appl., 507 (1)(2022), 125789.

\bibitem{serrin} J. Serrin and D.E. Varberg, \textit{\ A general chain rule
for derivatives and the change of variables formula for the Lebesgue integral%
}, Amer. Math. Monthly, 76(5) (1969), pp. 514-520.

\bibitem{1} J.I. Stavroulakis and E. Braverman,\textit{\ Stability and
oscillation of linear delay differential equations}, J. Differential
Equations, 293 (2021), pp. 282-312.

\bibitem{tandra} H. Tandra, \textit{The relationship between the change of
variable theorem and the fundamental theorem of calculus for the Lebesgue
integral}, Teaching of Mathematics, 17(2) (2014), pp. 76-83.

\bibitem{yoneyama} T. Yoneyama, \textit{On the} $3/2$ \textit{Stability
theorem for one-dimensional delay-differential equations}, J. Math. Anal.
Appl. 125 (1987), pp. 161-173.

\bibitem{yorke} J.\thinspace A. Yorke, \textit{Asymptotic stability for one
dimensional differential-delay equations}, J. Differential Equations, 7
(1970), pp. 189-202.
\end{thebibliography}
\end{document}